\newtheorem{proposition}{Proposition}[section]
\newtheorem{remark}{Remark}[section]
\newenvironment{proof}{{\bf Proof\ }}{\QED\\}
\newtheorem{lemma}{Lemma}[section]
\numberwithin{equation}{section}
\newtheorem{theorem}{Theorem}[section]
\newtheorem{corollary}{Corollary}[section]
\newcommand{\QED}{\hspace*{\fill}\rule{2.5mm}{2.5mm}}
\newcommand\qed{\hfill$\sqcap\kern-7.5pt\hbox{$\sqcup$}$}
\newcommand{\NN}{\mathbb{N}}
\newcommand{\RR}{\mathbb{R}}
\newcommand{\Sx}{\mathbb{S}}
\newtheorem{theo}{Theorem}
\newtheorem{prop}[theo]{Proposition}
\newcommand{\beqn}{\begin{equation}}
\newcommand{\eeqn}{\end{equation}}
\newcommand{\bear}{\begin{eqnarray}}
\newcommand{\eear}{\end{eqnarray}}
\newcommand{\bean}{\begin{eqnarray*}}
\newcommand{\eean}{\end{eqnarray*}}
\begin{document}
\title{Convergence to equilibrium of a linearized quantum Boltzmann equation for bosons at very low temperature}
\author{\\ Miguel Escobedo'* and Minh-Binh Tran*\\\\
'Departamento de Matem\'aticas, Universidad del Pa\'is Vasco\\ (UPV/EHU) Apartado 644, E48080 Bilbao, Spain\\
* Basque Center for Applied Mathematics\\ Mazarredo 14, 48009 Bilbao Spain\\
Email:  miguel.escobedo@ehu.es, tbinh@bcamath.org}

\maketitle
\begin{abstract}
We consider an approximation of the linearised equation of the homogeneous Boltzmann equation that describes  the distribution of quasiparticles in a dilute gas of bosons at low temperature. The corresponding collision frequency is neither bounded from below nor from above. We prove the existence and uniqueness of solutions  satisfying the conservation of energy. We show that these solutions converge to the corresponding stationary state, at an algebraic rate as time tends to infinity.

\end{abstract}
{\bf Keyword:}
{quantum Boltzmann equation, rate of convergence to equilibrium, algebraic decay. \\{\bf MSC:} {35Q20, 45A05, 47G10, 82B40, 82B40.}
\section{Introduction}

\label{LQBE}
A kinetic equation that describes the evolution of a non equilibrium spatially homogeneous distribution $n(t, p)$ of quasiparticles in a dilute Bose gas below the Bose Einstein transition temperature $T_c$  has been obtained by several authors (see for example \cite{IG}, \cite{KD1}, \cite{KD2}) and reads as follows:
\bear
&&\frac {\partial n} {\partial t}(t, p)=\int _{ \RR^3}\int _{ \RR^3} dp_1dp_2 \left[R(p, p_1, p_2)-R(p_1, p, p_2)-R(p_2, p_1, p) \right]
\label{E1}\\
&&R(p, p_1, p_2)=|\mathcal M(p, p_1, p_2)|^2\left[\delta (\omega (p)-\omega (p_1)-\omega (p_2))  \delta (p-p_1-p_2)\right]\times\nonumber \\
&&\hskip 2cm \times\left[ n(p_1)n(p_2)(1+n(p))-(1+n(p_1)(1+n(p_2))n(p) \right]\label{E2}
\eear
where $\mathcal M(p, p_1, p_2)$ is the transition probability, $\omega (p)$ is the so called Bogoliubov dispersion law:
\bear
\omega (p)=\left[\frac {g n_c} {m}|p|^2+\left(\frac {|p|^2} {2m} \right)^2 \right]^{1/2} \label{E3}
\eear
$m$ is the mass of the particles, $g$ is the interaction coupling constant and $n_c$ is the density of particles in the superfluid.  It is well known that the equation (\ref{E1})--(\ref{E3}) has a family of equilibria:
\bear
\label{E3bis}
n_0(p )=\frac {1} {e^{\frac{\omega (p)}{k_B T}}-1},\,\,\,\beta >0.
\eear
where $k_B$ is the Boltzmann's constant and $T$ the temperature of the quasiparticles whose distribution is $n_0$.

The relaxation of  $n$ towards its corresponding  equilibrium is a question that has deserved some interest by several authors (cf.  \cite{Buot:ORT:1997}, \cite{E}, \cite{EPV}, \cite{IG}). In the  more strictly mathematical literature,  the convergence to equilibrium of Boltzmann equation has been extensively studied and still is. Since the works by  T. Carleman \cite{MR1555365} and  H. Grad \cite{Grad:1963:ATB2},  then by  L. Arkeryd \cite{ark},  S. Ukai and K. Asano \cite{Ukai},  G. Toscani \cite{Toscani} and L. Desvillettes \cite{Desv90} until those by L. Desvillettes and C. Villani \cite{DesvillettesVillani:2005:OTG} and later by Y. Guo and R. Strain \cite{GS} (cf. the review article \cite{MR1942465} for more detailed references).  However, we do not consider in this work the nonlinear problem (\ref{E1})--(\ref{E3}). We only study, instead, the relaxation process of the equation linearised around one equilibrium.  Let us then write:
\bear
n(t, p)&=&n_0(p)+n_0(p)[1+n_0(p)]\Omega (t, p) \label{E4}\\
&=&n_0(p)+\frac {\Omega (t, p)} {4 \, \sinh^2 \left(\frac{\omega (p)}{2k_BT}\right)}\label{E4ii}
\eear
Plugging this expression in the equation and keeping only the linear terms in $\Omega $ we obtain:
\bear
n_0(p)[1+n_0(p )]\frac {\partial \Omega } {\partial t}(t, p)&=&\mathcal L(\Omega )(t, p) \label{E5i}\\
\mathcal L(\Omega )(t, p)&=&-M(p)\,\Omega  (t, p)+\mathcal T (\Omega )(t, p)\label{E5ii}\\
\mathcal T (\Omega )(t, p)&=&\int  _{ \RR^3 }\mathcal U(p, p') \Omega (t, p') dp'\label{E5iii}
\eear
where the measure $\mathcal U(p, p')$ and the function $M(p)$ have been calculated  in \cite{EPV} and whose explicit expressions are recalled in formulas (\ref{S6EU}) and (\ref{S6EM}) of the  Appendix. 

The structure of the equation (\ref{E5i})--(\ref{E5iii}) is the same as in other linearised Boltzmann equations, as they may be seen for example in  \cite{Caflisch:1980:BES}, \cite{CIP},  \cite{EPV}, \cite{Grad:1963:ATB2}, \cite{Spohn}. 

The relaxation to equilibrium  of the solutions of (\ref{E8bis})--(\ref{E12b}) has been considered in  \cite{Buot:ORT:1997},  \cite{Claro}, \cite{E}, \cite{EPV}, \cite{IG}. 

 As it is well known, the properties of the operator $\mathcal L$ crucially depend  on the range of the function $M (p)$ and compactness properties of the integral operator $\mathcal T$. For the classical Boltzmann equation with hard potential the corresponding function $M$ is such that, for some constant $M _0>0$, $M (p)\to M_0$ as $|p|\to 0$, $M (p)\to +\infty$ as $|p|\to \infty$, and its range is  $[M _0, +\infty)$. For soft potentials, $M (p)\to M_0>0$ as $|p|\to 0$ but $M (p)\to 0$ as $|p|\to \infty$ and the range is $[0, M _0]$. In both cases the integral operator $\mathcal T$ is compact in some suitable functional space.  It was shown in \cite{Buot:ORT:1997} that the values of the function $M(p)$  in (\ref{E5ii}) range from zero to $\infty$ as the variable $|p|$ goes from zero to $\infty$ (see Lemma \ref{Gamma} in the Appendix below). From this point of view, the situation for equation (\ref{E5i})--(\ref{E5iii}) is then similar to the case of the soft potentials for cl
 assical 
 particles.

In the case of the spatially homogeneous linearized Boltzman equation for classical particles with soft potential it was observed in \cite{Grad:1963:ATB2} (see also \cite{Caflisch:1980:BES} and \cite{Ukai}) that the  spectrum of  the corresponding linearised  operator $\mathcal L$ goes down until the origin   and no exponential  rate of convergence can  be expected for the solutions.  It is shown in  \cite{Caflisch:1980:BES} that for soft potentials and spatially homogeneous initial data $f(0, p)$ decaying exponentially fast as $|p|\to \infty$, the part of the solution $f$ in the range of $\mathcal L$ decays  in $L^2(\RR^3)$  like $e^{-\lambda t^\theta}$ for some $\lambda >0$ and $\theta \in (0, 1)$. On the other hand,  for non homogeneous initial data, the authors of  \cite{Ukai} proved algebraic rates of decay in Lebesgue--Sobolev mixed type spaces.

\subsection{Approximation of the linearised equation.}

Since  the functions $\omega (p)$ and $\mathcal M(p, p_1, p_2)$ appearing in equation  (\ref{E5i})--(\ref{E5iii}) are complicated functions of their arguments, we restrict the range of our analysis. Following \cite{Buot:ORT:1997} we consider the situation where the equilibria $n_0$ in (\ref{E5i})--(\ref{E5iii}) is at a quite  low temperature $T$. More precisely, we suppose  that the temperature $T$, the density $n_c$ of superfluid  and the interaction coupling constant $g$ are  such that $k_BT$ is much smaller than $gn_c$.
That range  has been widely considered in the physics literature, where the functions $\omega (p)$ and $\mathcal M(p, p_1, p_2)$ are then approximated as follows:
\bear
&&\omega (p)= c|p|, \,\,\,c=\sqrt{\frac {gn_c} {m}} \label{E6}\\
&&|\mathcal M(p, p_1, p_2)|^2=\frac {9c} {64\pi ^2mn_c}|p||p_1||p_2|, \label{E7}
\eear
 (cf. \cite{Buot:ORT:1997}, \cite{E}, \cite{IG}, \cite{Benin}). This approximation has an important consequence. Indeed, if $\omega (p)=c|p|$, then the  condition
$\omega (p)=\omega (p')+\omega (p-p')$ reads $|p|=|p'|+|p-p'|$. This implies that $p$ and $p'$ must be  parallel vectors of $\RR^3$.  The domain of integration in the integral at the right hand side of equation (\ref{E5i})--(\ref{E5iii}) is then reduced to the set $\mathcal C_p= \{\lambda\, p;\,\, \lambda \in \RR \}
$. More precisely,  we are approximating the equation (\ref{E5i})--(\ref{E5iii}) by
\bear
n_0(p )[1+n_0(p )]\frac {\partial \Omega } {\partial t}(t, p)=-M(p)\Omega  (t, p)+\int  _{ \RR^3 }
 \Omega (t, p')\,W(p, p')dp'  \label{E51}
 \eear 
where $W(p, p')$ and $M(p)$ are defined by (\ref{S6EW}) and (\ref{S6EG}) in the Appendix. Our goal  is to study the solutions of the Cauchy problem  associated to equation (\ref{E51}), their existence, uniqueness  and  relaxation towards equilibrium. 

Due to the formulas (\ref{E4}), (\ref{E4ii}), and for the sake of notation we shall use the following convention all along this article. Given $p\in \RR^3$, we shall denote:
\bear
\label{E8-1}
k\equiv k(p)=\frac {c |p|} {2k_BT}.
\eear
Since, we will also denote $|p|=r$, we shall use sometimes 
\bear
\label{E8-3}
k=\frac {c r} {2k_BT}.
\eear
With some abuse of notations we will also write $n_0(p)=n_0(|p|)=n_0(r)$ and also, by (\ref{SAEM0}), $M(p)=M(|p|)=M(r)$.
\begin{proposition} 
\label{Prop1}Let $\left\{ Y _{\ell\, m  }\right\} _{ \ell, m }$ be the spherical harmonics on $\Sx^2$. For any  sequence $\{ c _{ \ell\, m }\}$ of real numbers such that:
$$ \sum _{ \ell=0 }^\infty\sum _{ n=-\ell }^\ell c^2 _{ \ell\, m }<\infty$$
define 
$$
\Theta (p)=\left(\sum _{ \ell=0 }^\infty \sum _{ m=-\ell }^\ell c_{ \ell\, m }Y _{ \ell\, m }\left(\frac {p} {|p|} \right)\right) |p|.
$$
Then:
\bean
&&(i) \quad  \Theta \in   L^2\left(\RR^3, \frac {dp} {\sinh ^{2}k} \right),\\
&&(ii)\quad  -M(p)\Theta (p)+\int  _{ \RR^3 } \Theta (p')\,W(p, p')dp'=0.
\eean
\end{proposition}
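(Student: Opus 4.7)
My approach splits the proposition into its two parts.

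For (i), I would pass to spherical coordinates $p = r\sigma$ with $r = |p|$ and $\sigma \in \Sx^2$, and write $\Theta(p) = r\,f(\sigma)$ where $f(\sigma) = \sum_{\ell,m} c_{\ell m} Y_{\ell m}(\sigma)$. Parseval on $\Sx^2$ gives $\|f\|_{L^2(\Sx^2)}^2 = \sum c_{\ell m}^2 < \infty$, and the weighted norm factorizes as
$$\int_{\RR^3}\frac{|\Theta(p)|^2}{\sinh^2 k}\,dp \;=\; \|f\|_{L^2(\Sx^2)}^2 \int_0^\infty \frac{r^4}{\sinh^2(cr/(2k_BT))}\,dr.$$
The radial integrand behaves like $r^2$ near the origin (since $\sinh^2 k \sim k^2 \sim r^2$) and decays exponentially at infinity, so the integral converges.

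For (ii), the crucial observation is that the dispersion approximation $\omega(p) = c|p|$ from (\ref{E6}) degenerates the energy-momentum conservation: the conditions $p = p_1 + p_2$ and $|p| = |p_1| + |p_2|$ force $p_1 = \alpha p$ and $p_2 = (1-\alpha)p$ with $\alpha \in [0,1]$. On this collinear collision manifold, any $\Theta$ of the form prescribed in the statement satisfies the additivity identity
$$\Theta(p_1) + \Theta(p_2) \;=\; \bigl(\alpha + (1-\alpha)\bigr)|p|\,f\!\left(\frac{p}{|p|}\right) \;=\; \Theta(p),$$
because $p_1/|p_1| = p_2/|p_2| = p/|p|$. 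In other words, $\Theta$ is a collision invariant of the original nonlinear operator.

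The remaining task is to translate this invariance into the algebraic identity (ii). By linearity and the convergence established in (i), one may work mode by mode. Using the explicit formulas (\ref{S6EW}) and (\ref{S6EG}) from the Appendix, $W(p,p')$ arises from the three contributions $R(p,p_1,p_2) - R(p_1,p,p_2) - R(p_2,p_1,p)$ of (\ref{E2}) after eliminating one momentum via the momentum delta and applying the detailed-balance identity $n_0(p_1)n_0(p_2)(1+n_0(p)) = (1+n_0(p_1))(1+n_0(p_2))n_0(p)$ on the energy shell. This decomposes $W$ into three pieces carrying explicit equilibrium weights. I would then split $M(p)\Theta(p)$ into three corresponding pieces and use $\Theta(p) = \Theta(p_1) + \Theta(p_2)$ to cancel loss against gain term by term, as is standard when recognizing that collision invariants lie in the kernel of a linearized Boltzmann-type operator.

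The main obstacle is the bookkeeping around the singular support of $W$. The two delta functions constrain the collision to the measure-zero ray $\mathcal C_p = \{\lambda p : \lambda \in \RR\}$, so $W(p,p')$ in (\ref{S6EW}) must be interpreted as a singular measure concentrated on that ray. The reduction of $\int_{\RR^3} \Theta(p') W(p,p')\,dp'$ to a one-dimensional integral in $\alpha \in [0,1]$ requires matching the Jacobian from this reduction with the equilibrium weights $n_0(1+n_0)$ appearing in both $W$ and $M$, and it is here that one must verify that the explicit expressions given in the Appendix are indeed compatible with the collision-invariance heuristic. Once this identification is made, the cancellation is purely algebraic and (ii) follows.
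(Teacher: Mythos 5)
Part (i) of your proposal is exactly the paper's argument: orthonormality of the $Y_{\ell m}$ plus convergence of the radial integral $\int_0^\infty r^4\sinh^{-2}(cr/2k_BT)\,dr$. Nothing to add there.

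For part (ii) your underlying idea is the right one, but the route differs from the paper's and, as written, stops short of the decisive computation. The paper also works mode by mode, but then transfers each radial mode to the one-dimensional operator $E$ through the change of variables (\ref{E8}), under which $\Theta_{\ell m}(r)=c_{\ell m}r$ becomes a multiple of $\phi(k)=k^2/\sinh k$; the claim then reduces to $E(\phi)=0$, which is Corollary \ref{coro} and follows instantly from the symmetrized quadratic form (\ref{conservation10}) of Lemma \ref{Lemma20}, since $\sinh(k)\phi(k)/k=k$ makes the bracket $k+k'-(k+k')$ vanish. That bracket identity \emph{is} your collision-invariance observation $\Theta(p_1)+\Theta(p_2)=\Theta(p)$ on the collinear manifold, so the two proofs rest on the same cancellation; the difference is that the paper performs it on the regular one-dimensional kernel $K(k,k')$, where the symmetrization $I_1+\dots+I_4$ is an honest change of variables with no singular measure in sight, whereas you propose to perform it directly on $W(p,p')=G(r,r')\delta(u-1)$ with the delta functions still present.

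The gap is precisely the step you flag yourself: you never verify that the three weighted pieces of $W_0(r,r')$ in (\ref{SAEW0}) and the corresponding decomposition of $M(r)$ actually cancel against one another when $\Theta$ is linear along each ray. That verification is the entire content of the proof; the detailed-balance identity and the additivity of $\Theta$ are necessary ingredients but do not by themselves match the Jacobian factors $F_i(r,r')$ and the equilibrium weights term by term. To close the argument you should either carry out that bookkeeping explicitly on $W_0$ (essentially reproving Lemma \ref{Lemma20} in the $\Omega$ variables), or do what the paper does: observe that $L$ in (\ref{E5000-2}) is conjugate to $E$ under (\ref{E8}) and invoke $E(\phi)=0$, which has already been established rigorously.
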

\begin{theorem}\label{theorem}
Suppose that  $\Omega _0\in L^2\left(\RR^3, \frac {dp} {\sinh ^{2}k} \right)$.
Then, there exists a unique function $\Omega (t, p)$ such that
 \bear
 &&\Omega \in L^\infty \left(0,\infty;  L^2\left(\RR^3, \frac {dp} {\sinh ^{2}k} \right)\right)\cap 
 C \left([0,\infty); L^2\left(\RR^3, \frac {dp} {\sinh ^{2}k} \right)\right),\nonumber \\\label{S1E250}\\
&& \Omega - \Theta \in L^2\left(0, \infty; L^2\left(\RR^3, M(p)dp \right)\right),\label{S1E251}\\
&& \frac {\partial \Omega } {\partial t}\in L^2\left(0, \infty;  L^2\left(\RR^3, \frac {dp} {M(p)\, \sinh^4k}\right)\right), \label{S1E252}
 \eear
 satisfying the equation (\ref{E51}) in $L^2\left(0, \infty;  L^2\left(\RR^3, \frac {dp} {M(p)\, \sinh^4 k}\right)\right)$ and taking the initial data $\Omega _0$ in the following sense:
\begin{equation}
\label{S1Einitial}
\lim _{ t\to 0 }\left(||\Omega (t)-\Omega _0|| _{ L^2\left(\RR^3, \frac {dp} {M(p)\, \sinh^4k}\right)}+
||\Omega (t)-\Omega _0|| _{ L^2\left(\RR^3, \frac {dp} {\sinh ^{2}k} \right)}\right)=0.
\end{equation}
This solution also satisfies the following conservation property:
\bear
\label{consenergyM}
\frac {d} {dt}\int  _{ \RR^3 }n_0(p)(1+n_0(p))\Omega (t, p)|p|dp=0.
\eear
If $\Omega _0$ satisfies also:
\bear
\label{S1Econdition}
\int  _{ |p|<1 }\frac {|\Omega_0 (p)|^2} {|p| \sinh ^2k}dp<\infty
\eear
then
\bear
\label{S1ERatedecay}
||\Omega (t)-\Theta ||_{L^2\left(\RR^3, \frac {dp } {\sinh^{2} k}\right)  }
\le  \frac{C}{(1+t)^{1/2 }}||\Omega _0-\Theta||_{L^2\left(\RR^3, \frac {dp } {\sinh^{2} k} \right)  },
\eear
where
\bear
&&\hskip -0.5cm\Theta (p)=\left(\sum _{ \ell=0 }^\infty \sum _{ m=-\ell }^\ell c_{ \ell\, m }Y _{ \ell\, m }\left(\frac {p} {|p|} \right)\right) |p| \label{S1ETeta}\\
&&\hskip -0.5cmc _{ \ell\, m }=\left(\frac {\pi c} {2\sqrt{15}k_BT} \right)^4\int  _{ \RR^3 }\Omega _0(p)n_0(p)(1+n_0(p))Y _{ \ell\, m }\left(\frac {p} {|p|} \right)dp.\label{S1Eclm}
\eear
\end{theorem}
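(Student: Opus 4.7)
The plan is to rewrite (\ref{E51}) in the abstract form $\partial_t\Omega=4\sinh^2(k)\,\mathcal L(\Omega)$ by dividing through by $n_0(1+n_0)=\tfrac1{4\sinh^2k}$, and to solve the Cauchy problem variationally on the Hilbert triple $V\hookrightarrow H\hookrightarrow V'$, where
$$
H=L^2\!\left(\RR^3,\tfrac{dp}{\sinh^2k}\right),\qquad V=L^2\!\left(\RR^3,M(p)\,dp\right).
$$
The natural bilinear form
$$
a(u,v)=\int_{\RR^3}M(p)u(p)v(p)\,dp-\int_{\RR^3}\!\!\int_{\RR^3}W(p,p')u(p')v(p)\,dp'dp
$$
should be symmetric, nonnegative and continuous on $V\times V$, and by Proposition \ref{Prop1} its kernel contains the span of $\{|p|Y_{\ell m}(p/|p|)\}$. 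Multiplying the equation by $\Omega$ and integrating yields the basic energy identity $(d/dt)\|\Omega\|_H^2=-8a(\Omega,\Omega)\le0$, whence existence and uniqueness via the Lions--Magenes parabolic theory (or equivalently Hille--Yosida applied to the self-adjoint extension of $-4\sinh^2(k)\,\mathcal L$ in $H$, using a Galerkin approximation in $H$). This immediately delivers (\ref{S1E250})--(\ref{S1E251}) and the strong initial trace (\ref{S1Einitial}).

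The conservation (\ref{consenergyM}) follows because $|p|=\text{(const)}\cdot|p|Y_{00}(p/|p|)\in\ker\mathcal L$ by Proposition \ref{Prop1}, so by symmetry of $a$,
$$
\tfrac{d}{dt}\langle\Omega(t),|p|\rangle_H=-8a(\Omega(t),|p|)=-8a(|p|,\Omega(t))=0.
$$
For the regularity (\ref{S1E252}) of $\partial_t\Omega$, squaring the equation gives
$$
\int_{\RR^3}\frac{|\partial_t\Omega|^2}{M\sinh^4k}\,dp=16\int_{\RR^3}\frac{|\mathcal L\Omega|^2}{M(p)}\,dp,
$$
and it suffices to control $\mathcal L(\Omega-\Theta)=-M(\Omega-\Theta)+\mathcal T(\Omega-\Theta)$ in $L^2(dp/M)$ by $\|\Omega-\Theta\|_V$. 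The first piece is trivial, and the continuity estimate $\|\mathcal T u\|_{L^2(dp/M)}\le C\|u\|_V$ should be tractable thanks to the Bogoliubov reduction of the collision geometry to the radial rays $\mathcal C_p$ (so the integration in $p'$ becomes essentially one-dimensional), via a Schur/Cauchy--Schwarz argument on the explicit kernel $W$ recalled in the appendix.

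The main obstacle is the algebraic rate (\ref{S1ERatedecay}). Since $M(p)\to0$ as $|p|\to0$, the dissipation $\|\Omega-\Theta\|_V^2$ does not control $\|\Omega-\Theta\|_H^2$, so no spectral gap and no exponential rate are available (this matches the soft-potential analogy emphasised in the introduction). My plan is to establish a Nash-type interpolation
$$
\|u\|_H^2\le C\,\|u\|_V\,\|u\|_X,\qquad \|u\|_X^2:=\int_{|p|<1}\frac{|u(p)|^2}{|p|\sinh^2k}\,dp+\|u\|_H^2,
$$
which should follow from a direct comparison of the weights $|p|^{-1}$, $1/\sinh^2k\sim|p|^{-2}$ and $M(p)$ near the origin, using the explicit asymptotics of $M$ from Lemma \ref{Gamma}. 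The role of hypothesis (\ref{S1Econdition}) is precisely to make $\|\Omega_0-\Theta\|_X$ finite. I would then show that the $X$-norm is propagated, $\|\Omega(t)-\Theta\|_X\le C\|\Omega_0-\Theta\|_X$ for all $t\ge0$, by revisiting the energy estimate with the extra weight $|p|^{-1}$ and using nonnegativity of the collision kernel near zero. The specific $\Theta$ in (\ref{S1ETeta})--(\ref{S1Eclm}) is identified as the $H$-orthogonal projection of $\Omega_0$ onto $\ker\mathcal L$, so that the dissipation inequality truly bounds $\|\Omega-\Theta\|_V^2$. Combining these ingredients,
$$
\tfrac{d}{dt}\|\Omega-\Theta\|_H^2\le -c\,\|\Omega-\Theta\|_V^2\le -c'\frac{\|\Omega-\Theta\|_H^4}{\|\Omega-\Theta\|_X^2}\le -c''\frac{\|\Omega-\Theta\|_H^4}{\|\Omega_0-\Theta\|_X^2}
$$
is a Bernoulli-type ODI whose integration yields $\|\Omega(t)-\Theta\|_H^2\lesssim(1+t)^{-1}$, which is exactly (\ref{S1ERatedecay}). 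The hardest technical points will be the propagation of the $X$-norm in the presence of the singular weight $|p|^{-1}$ and the proper tracking of $M$'s behavior at both endpoints $|p|\to0$ and $|p|\to\infty$ in the Nash inequality.
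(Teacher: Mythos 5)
Your overall architecture (energy space $H=L^2(dp/\sinh^2k)$, dissipation space $V=L^2(M\,dp)$, symmetric nonnegative form, projection onto the kernel, Nash interpolation against a stronger norm made finite by (\ref{S1Econdition})) is a reasonable reading of the structure of the problem, and several ingredients do match the paper: the nonnegativity and symmetry of the form is Lemma \ref{Lemma20}, the identification of $\Theta$ with the projection onto $\ker\mathcal L$ associated with the conserved quantity is correct, and the coercivity $a(u,u)\ge C_*\|u-\mathbb{P}u\|_V^2$ that your dissipation inequality uses implicitly is the paper's Lemma \ref{LemmaCoercivity} (proved there by a Hilbert--Schmidt compactness argument after conjugation by $\sqrt\Gamma$; it is not automatic and needs a proof). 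Two remarks on the existence part: $V$ is \emph{not} continuously embedded in $H$ (near $p=0$ one has $M(p)\sim C/|p|$ by (\ref{S6EMzero}) while the $H$-weight behaves like $|p|^{-2}$), so the Gelfand triple $V\hookrightarrow H\hookrightarrow V'$ required by Lions--Magenes is not available as stated; the paper instead truncates the operator to $\{1/n<k<n\}$ and passes to the limit by a Cauchy-sequence argument. Also, the paper first reduces everything to a single one-dimensional radial equation, because the angular kernel $\delta(u-1)$ makes all spherical-harmonic components $W_\ell$ identical; this reduction is what makes the explicit kernel estimates of the Appendix usable.

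The genuine gap is the propagation of the $X$-norm, which is the linchpin of your decay argument. Testing the equation against $u/|p|$ (equivalently against $f/k$ in the paper's one-dimensional variables) and using $\|K(k,\cdot)\|_2\le Ck$ near $k=0$ from (\ref{kernelK1}) gives at best $\frac{d}{dt}\|u(t)\|_X^2\le C\|u(t)\|_H^2$; since the decay you are trying to prove is exactly $\|u(t)\|_H^2\sim(1+t)^{-1}$, which is \emph{not} integrable in time (this is the point of Remark \ref{convergencerate}), this estimate only yields $\|u(t)\|_X^2\lesssim\log(2+t)$ inside any bootstrap, and your Bernoulli ODI then produces a rate $(1+t)^{-1}\log(2+t)$ for $\|u\|_H^2$ rather than the clean (\ref{S1ERatedecay}); there is no evident mechanism forcing $\sup_t\|u(t)\|_X<\infty$. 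The paper avoids propagating any weighted norm: Lemma \ref{conditiondecay} derives a \emph{pointwise} Duhamel bound for small $k$, of the form $|f(t,k)|\le|f_0(k)|e^{-kt/2}+Ck\|f_0\|_2\int_0^t(1+s)^{-\omega}e^{-k(t-s)/2}ds$, so that hypothesis (\ref{BoltzmannPhononTheoremExponentialDecayCondition}) is used only on the \emph{initial datum}, through $\int_0^\theta|f_0(k)|^2e^{-kt}dk\le I/t$, and the rate is then closed by a two-step bootstrap on the exponent ($\omega=0$, then $\omega_0\in(1/6,1/2)$, then $1/2$) combined with a time-dependent frequency cutoff $\theta=\theta(t)\sim t^{-\delta}$. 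You would need to replace your propagation step by an argument of this kind, or prove uniform-in-time boundedness of the $X$-norm by some other mechanism, for your scheme to deliver the stated rate.
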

\begin{remark}
\label{convergencerate}
The algebraic decay rate  in  (\ref{S1ERatedecay}) is not sufficient to have the integrability in time of $||\Omega (t)-\Theta ||^2_{L^2\left(\RR^3, \frac {dp } {\sinh^{2} k}\right)  }$ at infinity although, by (\ref{S1E251}), this integrability property is true for $||\Omega (t)-\Theta ||^2_{L^2\left(\RR^3,\, M(p) dp \right)  }$.
\end{remark}
\begin{remark}
\label{remarkeme}
The behaviors of the function $M(p)$ as $|p|\to 0$ and $|p|\to \infty$  are given in Proposition \ref{S6P1}  of the Appendix. 
\end{remark}
\begin{remark}
\label{remarkenergy}
The system of quasiparticles described by (\ref{E1})--(\ref{E2}) satisfies the physical property of energy conservation. That property  is expressed, in terms of the function $n(t, p)$ as:
$$
\frac {d} {dt}\int  _{ \RR^3 }n(t, p)\omega (p)dp=0.
$$
 The identity (\ref{consenergyM}) shows that this conservation of energy still holds for the equation (\ref{E51}).
 
 Another natural quantity for the set of  quasiparticles described  by (\ref{E1})-(\ref{E2}) is
$
N(t)=\int _{ \RR^3 }n(t, p)dp
$
that represents the total number of particles. That physical quantity is not conserved by the system of particles described by (\ref{E1})--(\ref{E2}), and the function $N(t)$ is not  preserved, even formally, by equation (\ref{E1})-(\ref{E2}). Nevertheless, the corresponding  quantity for the linearised equation, namely $M(t)=\int _{ \RR^3 }n_0(p)(1+n_0(p))\Omega (t, p)dp$ is well defined for the solutions obtained in Theorem \ref{theorem}. See also   Remark \ref{totalmass} below.

 \end{remark}
 
The proof of theorem (\ref{theorem}) is based on the following argument.
Decompose first $\Omega (t, p)$ in spherical harmonics:
\bear
\Omega (t, p)=\sum _{ \ell=0 }^\infty \sum _{m=-\ell }^\ell\Omega _{ \ell\, m } (t, |p|) Y _{\ell\, m  }\left( \frac {p} {|p|}\right).\label{S1E200}
\eear
Using the decomposition of the measure $W$ in Legendre's polynomial (recalled in the Appendix) we obtain for each $\ell$ and $m$:
\bear
n_0[1+n_0]\frac {\partial \Omega  _{ \ell\, m }} {\partial t}(t, r)=-M (r)\Omega _{ \ell\, m }  (t, r)+\nonumber \\
+\frac {1} {2\ell +1}\int  _{0}^\infty
W  _{ \ell } (r, r') \Omega _{ \ell\, m } (t, r') dr' \label{S1E210}
\eear
where $r=|p|$, $r'=|p'|$ and:
\bear
\frac {1} {2\ell +1}W_\ell (r, r')=\frac {1} {2}\int  _{ -1 }^1W(p, p') P _{ \ell} (u)du,\,\,\,\ell=0, 1, \cdots \label{S1E211}
\eear
It follows from the expression of  $G(k, k')$ and $W(p, p')$  in (\ref{S6EG}) and (\ref{S6EW}) that
\bean
\frac {1} {2\ell +1}W_\ell (r, r')&=&\frac {1} {2}\int  _{ -1 }^1W(p, p') P _{ \ell} (u)du\\
&=&\frac {1} {2}\int  _{ -1 }^1W(p, p') du=G(r, r'),\,\,\,\ell=1, 2 \cdots
\eean
and all the coefficients $W_\ell (r, r')$ are equal. Therefore  all the modes $\Omega  _{\ell\, m }(t, r)$  satisfy the same equation:
\bear
&&\hskip -0.5cm n_0(r)[1+n_0(r)]\frac {\partial \Omega  _{ \ell\, m }} {\partial t}(t, r)=L(\Omega  _{ \ell\, m })(t, r)  \label{E5000-1}\\
&&\hskip -0.5cm L(\Omega  _{ \ell\, m })(t, r)=- M(r)\Omega _{ \ell\, m }  (t, r)
+ \int  _{0}^\infty
W_0 (r, r') \Omega _{ \ell\,m } (t, r') dr' \label{E5000-2}
\eear
for all $\ell=0, 1, 2, \cdots$ and $m\in \left\{-\ell, \cdots,\ell \right\}$, where with some abuse of notation we denote:
$$
n_0(p)=n_0(r)
$$
and $W_0(r, r')$ is given by formula (\ref{SAEW0})   in the Appendix.

Let us then consider an initial data $\Omega _0\in L^2(\RR^3)$ and write its decomposition in spherical harmonics:
\bean
\Omega _0(p)=\sum _{ \ell=0 }^\infty \sum _{ m=-\ell }^\ell\Omega  _{ 0, \ell\, m }(|p|)Y _{ \ell\, m }\left(\frac {p} {|p|} \right).
\eean

The solution to the equation (\ref{E51}) with the initial condition $\Omega (0, p)=\Omega _0(p)$  is then given by the function defined by the series  (\ref{S1E200})  
where every function $\Omega  _{\ell\, m }(t, r)$ solves  the equation (\ref{E5000-1}), (\ref{E5000-2}), with initial data $\Omega  _{ 0, \ell\, m }$, for  $\ell=0, 1, 2, \cdots$ and $m\in \left\{-\ell, \cdots,\ell \right\}$. 

It is then enough to study the solutions of the Cauchy problem for the equation (\ref{E5000-1}), (\ref{E5000-2}).
To this end we perform the following change of variables:
\bear
f(t, k)=\frac{c|p|}{2k_BT} \frac{\Omega (t, p)}{\sinh \left(\frac{c|p|}{2k_BT}\right)}, \,\,\,k=\frac{c|p|}{2k_BT}  \label{E8}
\eear
and obtain  (cf. \cite{Buot:ORT:1997} and \cite{Wannier}):
\bear
\frac {\partial f} {\partial t}(t, k)&=&E(f)\equiv -\Gamma (k)\,f(t, k)+T_2[f]\equiv T_1[f]+T_2[f]\label{E8bis}\\
T_2[f]&=&
2\int _0^\infty K(k, k') f(t, k')dk'  \label{E9}\\
\Gamma (k)&=&\sinh k\int _0^\infty \!\!\! \left(\phi (|k-k'|)\phi (k')+\phi (k+k')\phi (k') \right)dk'  \label{E10}\\
K(k, k')&=&\left(\phi (|k-k'|)-\phi (k+k') \right)k\, k' \label{E11}\\
\phi (k)&=&\frac {k^2} {\sinh k}\label{E12}\\
f(0)&=&f_0\label{E12b}.
\eear

The function $\Gamma (k)$ defined by (\ref{E10}) is such that:
\bear
\Gamma (k)\sim \frac {k^5} {15},\,\,\,\hbox{as}\,\,k\to +\infty \label{E14}\\
\Gamma (k)\sim \frac {\pi ^4 k} {15},\,\,\,\hbox{as}\,\,k\to 0, \label{E15}
\eear
(cf. \cite{Buot:ORT:1997} and Appendix below), and then, its range is $[0, +\infty)$.  

We introduce the following auxiliary function that will be needed in all the sequel:
\begin{equation}
\varphi _0=\frac {\phi } {||\phi ||_2}=\frac {\sqrt{30} } {\pi^2}\varphi .\label{S2E12}
\end{equation}

Then, Theorem  (\ref{theorem}) is a consequence of the following result.
\begin{theorem}\label{BoltzmannPhononTheoremExponentialDecay} Suppose that $f_0\in L^2(\mathbb{R}_+)$ and denote
\begin{equation}
c_0=\int _0^\infty f_0(k)\varphi _0(k)dk,\label{defczero}
\end{equation}
where $\varphi _0$ is defined in (\ref{S2E12}). Then,

(i)  there exists a  unique function  $f$ such that
\begin{eqnarray}
&&(f-c_0\varphi _0)\in L^2((0,\infty),  L^2(\Gamma )),\label{propsol0} \\
&&f\in L^\infty((0,\infty), L^2(\mathbb{R}_+))\cap C([0,\infty), L^2(\mathbb{R}_+)),\label{propsol1}\\
&& \frac {\partial f} {\partial t}\in L^2(0, \infty;  L^2(\Gamma ^{-1})),\label{propsol2}
\end{eqnarray}
that satisfies the equation  (\ref{E8bis}) in $L^2((0, \infty), L^2(\Gamma ^{-1}))$ and takes the initial data $f_0$ in the following sense:
\begin{eqnarray}
\label{propsol30}
\lim _{ t\to 0 }\left(||f(t)-f_0|| _{ L^2(\Gamma ^{-1}) }+||f(t)-f_0|| _{2}\right)=0.
\end{eqnarray}
This solution also satisfies 
\begin{eqnarray}
\label{propsol3000}
||f(t)||_2^2+2C_*\int _0^\infty||f(t)-c_0\varphi _0|| ^2_{ L^2(\Gamma ) }dt\le 2||f_0||_2^2
\end{eqnarray}
\bear
\label{propsol3250}
\left|\left|  \frac {\partial f} {\partial t} \right|\right| _{ L^2(0, \infty;  L^2(\Gamma ^{-1})) }\le 
 (1+2C_0)||f|| _{ L^2(0, \infty;  L^2(\Gamma ))  },
\eear
for some constant $C_0>0$, and the conservation of energy:
\begin{equation}
\label{consenergy}
\forall t>0:\,\,\,\,\frac {d} {dt}\int _0^\infty\!\!\! f(t, k)\frac {k^2\, dk} {\sinh(k)}=0.
\end{equation}
If $f_0\ge 0$, then $f(t, k)\ge 0$ for all $t>0$ and a. e. $k>0$.

(ii) If $f_0$ also satisfies one of the two following conditions:
\begin{equation}\label{BoltzmannPhononTheoremExponentialDecayCondition}
I=\int_{0}^1\frac{|f_0(k)|^2}{k}dk<\infty
\end{equation}
\begin{equation}\label{BoltzmannPhononTheoremExponentialDecayCondition2}
a= \lim _{ k\to 0 }f_0(k)\,\,\,\hbox{exists}.
\end{equation}
there exists a positive constant $C$, depending on $I$ or $a$ respectively,  such that, for all $t>0$:
\begin{equation}\label{BoltzmannPhononTheoremExponentialDecayExDecay}
\|f(t)-c_0\varphi _0 \|_{2}\leq C\frac{||f_0-c_0\varphi _0||_2}{(1+t)^{1/2}}.
\end{equation}
where $\varphi _0$ is defined in (\ref{S2E12})  and $c_0$ is given by (\ref{defczero}).
\end{theorem}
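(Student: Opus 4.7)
The plan is to exploit the self-adjoint, nonnegative structure of $E = T_1 + T_2$ as an operator on the weighted space $L^2(\Gamma)$. Through the change of variables (\ref{E8}), Proposition \ref{Prop1} translates into the statement that $\varphi_0\propto\phi$ spans the one-dimensional kernel of $E$: the radial modes $\Omega_{\ell m}(t,r)=r$ correspond after (\ref{E8}) to $f_{\ell m}\propto k^2/\sinh k = \phi(k)$. The associated symmetric bilinear form
$$
\mathcal{B}(f,g) := \int_0^\infty \Gamma(k)f(k)g(k)\,dk - 2\int_0^\infty\!\!\int_0^\infty K(k,k')f(k')g(k)\,dk'dk
$$
is then nonnegative with null space $\mathrm{span}\{\varphi_0\}$. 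The decisive preliminary step is a spectral gap: there exists $C_* > 0$ such that
$$
\mathcal{B}(f,f) \geq 2C_* \|f - \langle f,\varphi_0\rangle_{L^2(\mathbb{R}_+)}\,\varphi_0\|_{L^2(\Gamma)}^2 \qquad \forall\, f \in L^2(\Gamma).
$$
I would prove this by treating $T_2$ as a relatively compact perturbation of the multiplication by $-\Gamma$: using (\ref{E11}), (\ref{E12}), (\ref{E14}), (\ref{E15}) one checks that the kernel $K(k,k')/\sqrt{\Gamma(k)\Gamma(k')}$ is in $L^2(\mathbb{R}_+^2)$ (Hilbert--Schmidt), so a Weyl-type argument shows the essential spectrum of the associated form operator is bounded away from $0$, while simplicity of the kernel (via a Perron--Frobenius argument using the positivity of $K$) isolates the eigenvalue $0$.

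Part (i) is then constructed by a truncation/Galerkin scheme: restrict $\Gamma$ and $K$ to a bounded interval $[\epsilon,R]$, apply standard Hille--Yosida theory on the truncated problem, and pass to the limit via the energy identity
$$
\tfrac{1}{2}\tfrac{d}{dt}\|f(t)\|_2^2 = -\mathcal{B}(f(t),f(t)) \leq -2C_*\|f(t)-c_0\varphi_0\|_{L^2(\Gamma)}^2,
$$
which yields (\ref{propsol3000}) and simultaneously $f\in L^\infty_tL^2_k$; weak-$*$ compactness together with lower semicontinuity gives a solution in the class (\ref{propsol0})--(\ref{propsol1}). Uniqueness follows from the same energy identity applied to the difference of two solutions. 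The bound (\ref{propsol3250}) is obtained from $\partial_t f = -\Gamma f + T_2[f]$ by a Cauchy--Schwarz estimate $\|\Gamma^{-1/2}T_2[f]\|_2 \leq 2C_0\|f\|_{L^2(\Gamma)}$, using again the $L^2$-integrability of $K/\sqrt{\Gamma(k)\Gamma(k')}$. Conservation of energy (\ref{consenergy}) comes from testing the equation against $\varphi_0\propto k^2/\sinh k$ and using $E\varphi_0=0$. Positivity of $f(t)$ when $f_0\geq 0$ is inherited from the positivity of the kernel $K(k,k')$ via a standard maximum-principle argument applied to the truncated semigroup (writing it as $e^{-\Gamma t}$ plus iterated Duhamel terms).

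For part (ii), setting $g(t):=f(t)-c_0\varphi_0$, the conservation law (\ref{consenergy}) together with the definition (\ref{defczero}) ensures $\langle g(t),\varphi_0\rangle_2\equiv 0$, so the spectral gap yields $\tfrac{d}{dt}\|g(t)\|_2^2 \leq -4C_*\|g(t)\|_{L^2(\Gamma)}^2$. The algebraic decay rate then comes from the interpolation inequality
$$
\|g\|_2^2 \;\leq\; C_1\,\|g\|_{L^2(\Gamma)}\Big(\int_0^1 \tfrac{|g|^2}{k}\,dk\Big)^{1/2} + C_2\|g\|_{L^2(\Gamma)}^2,
$$
obtained by splitting $(0,\infty)=(0,1)\cup(1,\infty)$ and applying Cauchy--Schwarz, using $\Gamma(k)\sim k$ near $0$ and $\Gamma(k)\sim k^5$ at infinity (cf.\ (\ref{E14}), (\ref{E15})). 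Under hypothesis (\ref{BoltzmannPhononTheoremExponentialDecayCondition}) I would propagate in time the singular moment $\int_0^1|f(t,k)|^2/k\,dk$: multiplying (\ref{E8bis}) by $f/k$ and integrating on $(0,1)$, the dissipative term $-\Gamma(k)|f|^2/k \gtrsim -|f|^2$ is integrable, and the nontrivial step is controlling $T_2[f]/k$ near $0$, where the explicit structure $K(k,k')=(\phi(|k-k'|)-\phi(k+k'))kk'$ together with $\phi(k)\sim k$ at the origin provides the extra factor of $k$ needed to absorb the singular weight. Hypothesis (\ref{BoltzmannPhononTheoremExponentialDecayCondition2}) reduces to the previous case after decomposing $f_0 = a\chi + (f_0-a\chi)$ with a smooth cutoff $\chi$ of compact support satisfying $\chi(0)=1$. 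Feeding the resulting uniform bound $\|g(t)\|_{L^2(\Gamma^{-1})}^2 \leq M$ into the interpolation produces $\tfrac{d}{dt}\|g\|_2^2 \leq -c\|g\|_2^4/M$, whose integration yields (\ref{BoltzmannPhononTheoremExponentialDecayExDecay}) with a constant depending on $I$ (respectively $a$).

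The main obstacle is twofold: first, the spectral gap inequality, which demands a careful relative-compactness argument for $T_2$ against the multiplication by $\Gamma$ in the $L^2(\Gamma)$ geometry, and identification of the simple eigenvalue $0$; and second, the uniform-in-time propagation of the singular weighted norm $\int_0^1|f(t,k)|^2/k\,dk$, where the non-integrability of $\Gamma^{-1}$ at the origin makes the interplay between $T_1$ and $T_2$ delicate, and closure of the estimate relies on the cancellation in the kernel built from the difference $\phi(|k-k'|)-\phi(k+k')$.
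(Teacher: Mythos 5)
Your part (i) follows essentially the same route as the paper (truncation of $\Gamma$, $K$ and $f_0$, a Hilbert--Schmidt argument for the spectral gap $\langle -Eh,h\rangle\ge C_*\|h-\mathbb{P}h\|^2_{L^2(\Gamma)}$, the energy identity, and passage to the limit), and that part is sound. One caveat: both your Perron--Frobenius identification of the kernel and your maximum-principle argument for positivity invoke the positivity of $K(k,k')=(\phi(|k-k'|)-\phi(k+k'))kk'$, which is false: $\phi(x)=x^2/\sinh x$ increases up to $x\approx 1.92$ and then decreases, so for instance $K(3/2,1/2)=\tfrac34(\phi(1)-\phi(2))<0$. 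The paper identifies $N(E)=\mathrm{span}\{\phi\}$ instead from the explicit quadratic-form representation of $\langle -Ef,g\rangle$ (Lemma \ref{Lemma20} and Corollary \ref{coro}), which requires no sign condition on $K$.

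The genuine gap is in part (ii), where your route (Nash-type interpolation against the singular moment $Y(t)=\int_0^1|f(t,k)|^2k^{-1}dk$) differs from the paper's and does not close. Testing the equation with $f/k$ on $(0,1)$ and using the cancellation you invoke, namely $|T_2[f](k)|\le C\,k\,\|f(t)\|_2$ for small $k$, the gain term is bounded by $C\|f(t)\|_2\int_0^1|f(t,k)|\,dk\le C\|f(t)\|_2^2$, while the dissipation is only $-\int_0^1\Gamma(k)|f|^2k^{-1}dk\simeq -c\int_0^1|f|^2dk$; there is no reason for the latter to absorb the former, and $\|f(t)\|_2^2$ is merely bounded, not time-integrable (the time-integrable quantity is $\|f-c_0\varphi_0\|^2_{L^2(\Gamma)}$, which does not control $\int_0^1|f|^2dk$ precisely because $\Gamma$ vanishes at the origin). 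Hence your estimate only yields $Y(t)\le Y(0)+Ct$, the uniform bound $Y(t)\le M$ fails, and the subsequent ODE $\frac{d}{dt}\|g\|_2^2\le -c\|g\|_2^4/M$ is unavailable; even bootstrapping the desired conclusion gives $Y(t)\lesssim\log t$ and a logarithmic loss in the final rate. The paper avoids propagating any weighted norm: it integrates the pointwise-in-$k$ inequality $\partial_t|f(t,k)|+\tfrac k2|f(t,k)|\le C k\|f(t)\|_2$ to obtain $|f(t,k)|\le |f_0(k)|e^{-kt/2}+(\text{controlled})$, so that the smallness of $\int_0^\theta|f(t,k)|^2dk$ is read off from $\int_0^\theta|f_0(k)|^2e^{-kt}dk\le I/t$ --- the hypothesis on $f_0$ is used directly on the initial data and never propagated --- and then combines this with a time-dependent cutoff $\theta=\theta(t)\sim t^{-\delta}$ in the coercivity estimate and a two-stage bootstrap of the decay exponent (Lemmas \ref{conditiondecay} and \ref{TheoremExponentialDecay}). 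Finally, your reduction of hypothesis (\ref{BoltzmannPhononTheoremExponentialDecayCondition2}) to (\ref{BoltzmannPhononTheoremExponentialDecayCondition}) by writing $f_0=a\chi+(f_0-a\chi)$ fails on both counts: $a\chi$ itself has $\int_0^1 a^2\chi^2k^{-1}dk=\infty$, and $f_0-a\chi$ need not satisfy (\ref{BoltzmannPhononTheoremExponentialDecayCondition}) (take $f_0(k)=a+|\log k|^{-1/2}$); the paper handles that case directly via the dominated-convergence limit $t\int_0^\theta f_0^2e^{-kt}dk\to a^2$.
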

 
The algebraic rate of convergence in $L^2(\RR_+)$ norm is proved using  classical arguments. We first establish a coercivity property of the operator $E$ in a suitable functional space. Then,  this coercivity is used to obtain an upper estimate of the convergence rate. This last step uses the detailed behavior of the kernel $K$ and the function $\Gamma $ near  $k=0$. 

The plan of the paper is the following. We prove in Section \ref{SectionOperatorE} two important properties of the operator $E$.
Section \ref{SectionExistence} is devoted to the  proof of an existence and uniqueness result for the solution of Cauchy problem (\ref{E8bis})-(\ref{E12b}). In Section \ref{Rate} we prove the convergence rate of the solutions of the problem (\ref{E8bis})--(\ref{E12b}).  
In Section \ref{nonradial} we prove  Proposition \ref{Prop1} and Theorem \ref{theorem}. We give in a final Appendix some auxiliary results, in particular the detailed behaviors of the functions $\Gamma $ and $K$.

\section{Properties of the operator $E$}
\label{SectionOperatorE}
In this  Section we prove several important properties of the operator $E$. We will be using  the following spaces.
\begin{eqnarray*}
&&L^2(\Gamma )=\left\{u: (0, \infty) \to \RR; \hbox{measurable, such that }\,||u|| _{ L^2(\Gamma ) }<\infty \right\}\\
&&L^2(\Gamma^{-1} )=\left\{u: (0, \infty) \to \RR; \hbox{measurable, such that }\,||u|| _{ L^2(\Gamma^{-1} ) }<\infty \right\}
\end{eqnarray*}
where
\begin{eqnarray*}
||u|| _{ L^2(\Gamma ) }=\left( \int_{0}^\infty |u(k)|^2\Gamma(k) dk\right)^{1/2}\\
||u|| _{ L^2(\Gamma ^{-1}) }=\left( \int_{0}^\infty \frac{|u(k)|^2}{\Gamma(k)} dk\right)^{1/2}.
\end{eqnarray*}
We shall also use the classical $L^2(\RR_+)$ of functions of integrable square in $(0, \infty)$, with its norm $||\cdot||_2$. 

Since several Hilbert spaces will be used all along this work, we want to be careful with the notation.  We denote by $\langle \cdot, \cdot \rangle$  the scalar product in $L^2(\RR_+)$:
$$
\langle \varphi  ,  \psi \rangle=\int _0^\infty \varphi (k)\psi (k)dk
$$
whenever this integral is well defined. We will also use the notation $\perp$ to denote the orthogonality with respect to the scalar product of $L^2(\RR_+)$:
$$
\varphi \perp \psi  \Longleftrightarrow \int _0^\infty \varphi (k)\psi (k)dk=0
$$
and similarly, if $A$ is a set of measurable functions,
$$
\varphi \in A^{\perp}\Longleftrightarrow \int _0^\infty \varphi (k)\psi (k)dk=0,\,\,\,\forall \psi \in A.
$$
We may then have $\varphi \perp \psi $ even if neither $\varphi $ nor $\psi $ belong to $L^2(\RR_+)$, as long as the integral on the right hand side is well defined and equal to zero.
\begin{lemma}\label{Lemma2}
The operator $E$ defined by (\ref{E8bis})--(\ref{E12}) is linear and continuous from $L^2(\Gamma )$ into 
$L^2(\Gamma ^{-1})$ and, for every $u\in L^2(\Gamma )$:
\begin{equation}
||E(u)|| _{ L^2(\Gamma ^{-1}) }\le (1+2C_0)||u|| _{ L^2(\Gamma ) } \label{Lemma2E10}
\end{equation}
where
\begin{equation}
C_0= \left(\int _0^\infty \left|\frac{K(k, k')}{\sqrt {\Gamma (k) \Gamma (k')}}\right|^2dk'dk\right)^{1/2}<\infty. \label{Lemma2E1}
\end{equation}
\end{lemma}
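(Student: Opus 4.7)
The plan is to split $E = T_1 + T_2$ into its multiplication part $T_1[u](k) = -\Gamma(k)u(k)$ and its integral part $T_2[u](k) = 2\int_0^\infty K(k,k')u(k')\,dk'$, bound each separately as an operator from $L^2(\Gamma)$ into $L^2(\Gamma^{-1})$, and then combine by the triangle inequality.

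For $T_1$, the bound is an exact identity: by the very definitions of the two weighted norms,
\begin{equation*}
\|T_1[u]\|_{L^2(\Gamma^{-1})}^2 = \int_0^\infty \frac{|\Gamma(k)u(k)|^2}{\Gamma(k)}\,dk = \int_0^\infty |u(k)|^2 \Gamma(k)\,dk = \|u\|_{L^2(\Gamma)}^2,
\end{equation*}
so $\|T_1[u]\|_{L^2(\Gamma^{-1})} = \|u\|_{L^2(\Gamma)}$. This accounts for the factor $1$ in $(1+2C_0)$.

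For $T_2$, I would apply the Cauchy--Schwarz inequality in the integral defining $T_2[u](k)$ against the weight $\Gamma(k')$: writing $K(k,k')u(k') = \frac{K(k,k')}{\sqrt{\Gamma(k')}}\cdot \sqrt{\Gamma(k')}\,u(k')$,
\begin{equation*}
|T_2[u](k)|^2 \le 4\left(\int_0^\infty \frac{|K(k,k')|^2}{\Gamma(k')}\,dk'\right)\|u\|_{L^2(\Gamma)}^2 .
\end{equation*}
Dividing by $\Gamma(k)$ and integrating in $k$ gives
\begin{equation*}
\|T_2[u]\|_{L^2(\Gamma^{-1})}^2 \le 4\left(\int_0^\infty\!\!\int_0^\infty \frac{|K(k,k')|^2}{\Gamma(k)\Gamma(k')}\,dk'\,dk\right)\|u\|_{L^2(\Gamma)}^2 = 4C_0^2\|u\|_{L^2(\Gamma)}^2,
\end{equation*}
so $\|T_2[u]\|_{L^2(\Gamma^{-1})} \le 2C_0\|u\|_{L^2(\Gamma)}$. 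Adding this to the estimate for $T_1$ yields the claim.

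The only nontrivial point is that $C_0 < \infty$, which I would verify using the asymptotics stated in the Appendix: $\Gamma(k) \sim \pi^4 k/15$ as $k\to 0^+$ and $\Gamma(k)\sim k^5/15$ as $k\to\infty$, together with the behavior of $\phi(k)=k^2/\sinh k$ (linear at $0$, exponentially decaying at $\infty$) in the definition $K(k,k') = (\phi(|k-k'|)-\phi(k+k'))kk'$. Near the origin, the expansion $\phi(|k-k'|)-\phi(k+k') \sim -2\min(k,k')$ gives $|K(k,k')|^2 \lesssim \min(k,k')^2 k^2k'^2$, so $|K|^2/(\Gamma(k)\Gamma(k')) \lesssim \min(k,k')^2 kk'$, which is locally integrable. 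Away from the origin, the exponential decay of $\phi(k+k')$ and of $\phi(|k-k'|)$ whenever $\max(k,k')$ is large beats the polynomial growth of $\Gamma^{-1}$. Thus the double integral converges and the constant $C_0$ is finite, completing the argument.
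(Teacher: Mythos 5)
Your main operator estimate is correct and yields the same constant $(1+2C_0)$ as the paper, but by a slightly different route: you bound $\|T_1u\|_{L^2(\Gamma^{-1})}$ and $\|T_2u\|_{L^2(\Gamma^{-1})}$ directly (the first as an exact identity, the second by pointwise Cauchy--Schwarz against the weight $\Gamma(k')$ followed by integration in $k$), whereas the paper argues by duality, showing $|\langle E(u),v\rangle|\le(1+2C_0)\|u\|_{L^2(\Gamma)}\|v\|_{L^2(\Gamma)}$ for all $v\in L^2(\Gamma)$ and then identifying $E(u)$ as an element of $(L^2(\Gamma))'=L^2(\Gamma^{-1})$. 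These are dual formulations of the same Hilbert--Schmidt-type bound; your version is the more elementary of the two, since it never needs the identification of the dual space.

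The one step that is not sound as written is your verification that $C_0<\infty$, which is in fact the only substantive content of the lemma (the paper delegates it to Lemma \ref{kernelK} of the Appendix). Your treatment of the region near the origin is fine. But for large $k,k'$ you invoke ``the exponential decay of $\phi(k+k')$ and of $\phi(|k-k'|)$ whenever $\max(k,k')$ is large'', and the second claim is false: near the diagonal $k\approx k'$ with both variables large, $|k-k'|$ stays bounded and $\phi(|k-k'|)$ is of order one, so $|K(k,k')|$ is of order $kk'$ there. The double integral still converges, but the mechanism is the decay $\Gamma(k)^{-1}\sim 15\,k^{-5}$ at infinity rather than any smallness of $K$: one has $\int_0^\infty|K(k,k')|^2dk'\lesssim k^4+k^2$ (this is (\ref{kernelK1})), whence, after reducing by the symmetry $K(k,k')=K(k',k)$ to the set where $\Gamma(k')\ge\Gamma(k)$ so that $(\Gamma(k)\Gamma(k'))^{-1}\le\Gamma(k)^{-2}$, the contribution of $\{k\ge1,\,k'\ge1\}$ is controlled by $\int_1^\infty (k^4+k^2)\,k^{-10}\,dk<\infty$. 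You should replace the exponential-decay claim for $\phi(|k-k'|)$ by an argument of this type. (Also note that $\Gamma^{-1}$ grows only near the origin; at infinity it decays, and that decay is precisely what saves you in this region.)
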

\begin{proof}
The proof that the integral defining $C_0$ in  (\ref{Lemma2E1}) converges is given in detail in the Appendix. On the other hand, for all $u\in L^2(\Gamma )$ and $v\in L^2(\Gamma )$:
\begin{eqnarray*}
&&\langle E(u), v\rangle=-\int _0^\infty \Gamma (k)u(k)v(k)dk+2\int _0^\infty \int _0^\infty K(k, k')u(k')v(k)dk'dk\\
&&\left|\int _0^\infty \Gamma (k)u(k)v(k)dk\right|\le \left(\int _0^\infty \Gamma (k)|u(k)|^2dk\right)^{1/2}\left(\int _0^\infty \Gamma (k)|v(k)|^2dk\right)^{1/2}\\
&&\left|\int _0^\infty \int _0^\infty K(k, k')u(k')v(k)dk'dk\right|=\\
&&\hskip 3.5cm =\left|\int _0^\infty \int _0^\infty \frac{K(k, k')}{\sqrt {\Gamma (k) \Gamma (k')}}\sqrt {\Gamma (k) \Gamma (k')}u(k')v(k)dk'dk\right|\\
&&= \int _0^\infty \left|\sqrt{\Gamma (k)}v(k)\int _0^\infty \frac{K(k, k')}{\sqrt {\Gamma (k) \Gamma (k')}}\sqrt {\Gamma (k')}u(k')dk'\right|dk\\
&&\le \left(\int _0^\infty \Gamma (k)|v(k)|^2dk\right)^{1/2}  \left(\int _0^\infty \left|\frac{K(k, k')}{\sqrt {\Gamma (k) \Gamma (k')}}\sqrt {\Gamma (k')}u(k')dk'\right|dk\right)^{1/2}\\
&&\le \left(\int _0^\infty \Gamma (k)|v(k)|^2\right)^{1/2} \left(\int _0^\infty \Gamma (k')|u(k')|^2dk'\right)^{1/2}\times \\
&&\hskip 6cm 
\times \left(\int _0^\infty \int _0^\infty \left|\frac{K(k, k')}{\sqrt {\Gamma (k) \Gamma (k')}}\right|^2dk'dk\right)^{1/2}.
\end{eqnarray*}
We have then for all $u\in L^2(\Gamma )$ and $v\in L^2(\Gamma )$:
\begin{equation*}
|\langle E(u), v\rangle|\le (1+2C_0)||u|| _{ L^2(\Gamma ) }||v|| _{ L^2(\Gamma ) }.
\end{equation*}
from where $E(u)\in (L^2(\Gamma ))'=L^2(\Gamma ^{-1})$ and (\ref{Lemma2E10}) follows.
\end{proof}

It was already shown in \cite{Buot:ORT:1997} that the operator $E$ is non negative. The precise property and its proof are  given in the following Lemma for  the sake of completeness.
\begin{lemma}\label{Lemma20}
For all $f\in L^2(\Gamma )$ and $g\in L^2(\Gamma )$:
\begin{eqnarray}
&&\langle - E(f), g\rangle = \int_0^\infty\int_0^\infty\phi(k+k')\phi(k')\phi(k)\times \label{conservation10}\\
& &\times\left[\frac{\sinh(k)f(k)}{k}+\frac{\sinh(k')f(k')}{k'}-\frac{\sinh(k+k')f(k+k')}{k+k'}\right]\times\nonumber\\
& &\times\left[\frac{\sinh(k)g(k)}{k}+\frac{\sinh(k')g(k')}{k'}-\frac{\sinh(k+k')g(k+k')}{k+k'}\right]\!dkdk'.\nonumber
\end{eqnarray}
\end{lemma}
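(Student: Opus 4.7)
The strategy is to recast $\langle -E(f), g\rangle$ in the manifestly symmetric three-wave form on the right-hand side, by combining changes of variables with the algebraic identity $\phi(\xi)\sinh\xi=\xi^2$, which follows directly from (\ref{E12}).

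First, starting from (\ref{E8bis})--(\ref{E12}) I would write
\[
\langle -E(f), g\rangle=\int_0^\infty \Gamma(k)f(k)g(k)\,dk-2\int_0^\infty\!\!\int_0^\infty K(k,k')f(k')g(k)\,dk'dk,
\]
substitute the explicit formulas (\ref{E10}) and (\ref{E11}), and split each $k'$--integral at $k'=k$ to eliminate the absolute value in $\phi(|k-k'|)$. This produces three families of double integrals whose kernels are: (a) $\phi(k-k')\phi(k')$ on $\{0<k'<k\}$, (b) $\phi(k'-k)\phi(k')$ on $\{0<k<k'\}$, and (c) $\phi(k+k')\phi(k')$ on $(0,\infty)^2$, each weighted by a $\sinh$ factor coming from $\Gamma$ or by factors $k\,k'$ coming from $K$.

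Next, I would transport each family to the common three-wave domain $\{(k_1,k_2):k_1,k_2>0\}$ so that the three $\phi$--factors become $\phi(k_1)\phi(k_2)\phi(k_1+k_2)$. For (a), set $k_1=k-k'$, $k_2=k'$ (so that $k=k_1+k_2$); for (b), set $k_1=k$, $k_2=k'-k$ (so that $k'=k_1+k_2$); for (c), set $k_1=k$, $k_2=k'$ directly, and use $\phi(\xi)\sinh\xi=\xi^2$ to reshape the $\sinh$--weights into the missing $\phi(k_1+k_2)$. In every case the Jacobian equals $1$, and at the end $\langle -E(f),g\rangle$ is written as a single integral on $\{k_1,k_2>0\}$ against the symmetric measure $\phi(k_1)\phi(k_2)\phi(k_1+k_2)\,dk_1dk_2$.

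Finally, I would expand the product on the right-hand side of (\ref{conservation10}) into the nine bilinear terms coming from the three quantities $a_i=\mathrm{sign}_i\,\sinh(k_i)f(k_i)/k_i$ and the analogous $b_j$ for $g$, with $k_3:=k_1+k_2$ and signs $(+,+,-)$. Applying $\phi(\xi)\sinh\xi=\xi^2$ to each term converts the integrand $\phi(k_1)\phi(k_2)\phi(k_1+k_2)\,a_ib_j$ into an expression of the schematic shape $\phi(\text{two of the }k_i)\sinh(\text{the third})f(k_\ast)g(k_{\ast\ast})$. A term-by-term comparison with the nine contributions collected above, invoking the symmetry $k_1\leftrightarrow k_2$ to combine pairs $(i,j)$ and $(j,i)$ with $i\neq j$ (this accounts for the factor $2$ in front of $K$), yields (\ref{conservation10}).

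The main obstacle is purely bookkeeping: tracking signs and matching each of the nine product terms to the correct piece of $\Gamma$ or $K$ after the changes of variables. A clean way to organize it is to fix the triple $(k_1,k_2,k_3=k_1+k_2)$ with signs $(+,+,-)$ once and for all, and to observe that on each side every term is manifestly invariant under a prescribed subgroup of permutations of $\{k_1,k_2,k_3\}$; the matching then uses only these symmetries together with the identity $\phi\sinh=(\cdot)^2$.
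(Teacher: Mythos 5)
Your proposal is correct and follows essentially the same route as the paper: split the bilinear form into the $\Gamma$- and $K$-contributions, cut the integrals at $k'=k$ to remove the absolute values, transport each piece to the common domain where the kernel reads $\phi(k)\phi(k')\phi(k+k')$ via unit-Jacobian changes of variables, and use $\phi(\xi)\sinh\xi=\xi^2$ to convert the remaining weights into the factors $\sinh(\cdot)h(\cdot)/(\cdot)$ (the paper's $Q[h]$). The only cosmetic difference is that you match against the nine-term expansion of the right-hand side, whereas the paper simply adds up its four transformed integrals $I_1,\dots,I_4$; the bookkeeping is identical.
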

\begin{proof}
We first notice that by definition:
\begin{eqnarray*}
\langle -Ef,g\rangle&=&\int_0^\infty\int_0^\infty\sinh k\left(\phi (|k-k'|)\phi (k')+\phi (k+k')\phi (k') \right)f(k)g(k)dk' dk\\
& &-2\int_0^\infty\int_0^\infty\left(\phi (|k-k'|)-\phi (k+k') \right)k\, k' f(k')g(k)dk'dk\\
&=& I_1+I_2+I_3+I_4.
\end{eqnarray*}
We now write the integrals $I_1$, $I_2$, $I_3$ and $I_4$ using the definitions and symmetries of the two  functions $\Gamma (k)$ and $K(k, k')$.
\begin{eqnarray*}
&&I_1=\int_0^\infty\int_0^\infty\sinh k\left(\frac{|k-k'|^2}{\sinh (|k-k'|)}\frac{|k'|^2}{\sinh (k')} \right)f(k)g(k)dk' dk\\
&&I_2=\int_0^\infty\int_0^\infty\sinh k\left(\frac{|k+k'|^2}{\sinh (|k+k'|)}\frac{|k'|^2}{\sinh (k')} \right)f(k)g(k)dk' dk\\
&&I_3= -2\int_0^\infty\int_0^\infty\left(\frac{|k-k'|^2}{\sinh (|k-k'|)} \right)k\, k' f(k')g(k)dk'dk\\
&&I_4= 2\int_0^\infty\int_0^\infty\left(\frac{|k+k'|^2}{\sinh (|k+k'|)} \right)k\, k' f(k')g(k)dk'dk.
\end{eqnarray*}
Let us denote for the remaining of this calculation $Q[g ](k)=\frac{\sinh (k)g (k)}{k}$
\begin{eqnarray}\label{pos1}\nonumber
&&I_1
= \int_0^\infty\int_0^\infty\frac{|k-k'|^2}{\sinh (|k-k'|)}\frac{|k'|^2}{\sinh (k')}\frac{|k|^2}{\sinh (k)}Q[f](k)Q[g](k)dk'dk \nonumber\\
&&= \int_{\{k>k'\}}\frac{|k-k'|^2}{\sinh (|k-k'|)}\frac{|k'|^2}{\sinh (k')}\frac{|k|^2}{\sinh (k)}Q[f](k)Q[g](k)dk'dk \nonumber \\
&&+\int_{\{k<k'\}}\frac{|k-k'|^2}{\sinh (|k-k'|)}\frac{|k'|^2}{\sinh (k')}\frac{|k|^2}{\sinh (k)}Q[f](k)Q[g](k)dk'dk \nonumber \\
& &=\int_{0}^\infty\int_0^\infty\phi (k)\phi (k')\phi (k+k')Q[f](k+k')Q[g](k+k')+ \nonumber \\
&&+\int_{0}^\infty\int_0^\infty\phi (k)\phi (k')\phi (k+k')Q[f](k)Q[g](k)dk'dk.
\end{eqnarray}
\begin{eqnarray}\label{pos2}
&&I_2 =\int_0^\infty\int_0^\infty\left(\frac{|k+k'|^2}{\sinh (|k+k'|)}\frac{|k'|^2}{\sinh (k')} \frac{|k|^2}{\sinh (k)} \right)\frac{\sinh (k)^2}{|k|^2}f(k)g(k)dk' dk\nonumber \\
&& =\int_0^\infty\int_0^\infty\phi(k+k')\phi(k')\phi(k)Q[f](k)Q[g](k)dk' dk \nonumber \\
&& =\int_0^\infty\int_0^\infty\phi(k+k')\phi(k')\phi(k)Q[f](k')Q[g](k')dk' dk.
\end{eqnarray}
\begin{eqnarray}\label{pos3}\nonumber
&&I_3
=-2\int_{\{k>k'\}}\left(\frac{|k-k'|^2}{\sinh (|k-k'|)} \right)k\, k' f(k')g(k)dk'dk\nonumber\\
& &-2\int_{\{k>k'\}}\left(\frac{|k-k'|^2}{\sinh (|k-k'|)} \right)k\, k' f(k)g(k')dk'dk\nonumber\\
&&= -2\int_0^\infty\int_0^\infty\left(\frac{|k|^2}{\sinh (|k|)} \right)(k+k')\, k' f(k')g(k+k')dk'dk\nonumber\\
& &-2\int_0^\infty\int_0^\infty\left(\frac{|k|^2}{\sinh (|k|)} \right)(k+k')\, k' g(k')f(k+k')dk'dk\nonumber\\
&&=-\int_0^\infty\int_0^\infty\phi(k+k')\phi(k')\phi(k)\left(Q[f](k)+Q[f](k')\right)Q[g](k+k')dkdk'\nonumber\\
&&-\int_0^\infty\int_0^\infty\phi(k+k')\phi(k')\phi(k)\left(Q[g](k)+Q[g](k')\right)Q[f](k+k')dkdk'.\nonumber\\
\end{eqnarray}
\begin{eqnarray}\label{pos4}\nonumber
&&I_4
=\int_0^\infty\int_0^\infty\phi(k+k')\phi(k')\phi(k)Q[f](k)Q[g](k')dk' dk+ \nonumber\\
&&+\int_0^\infty\int_0^\infty\phi(k+k')\phi(k')\phi(k)Q[f](k')Q[g](k)dk' dk.
\end{eqnarray}
Identity  (\ref{conservation10}) follows by combining (\ref{pos1})--(\ref{pos4}). 

\end{proof}

\begin{corollary}
\label{coro} Let $\phi $ be the function defined in (\ref{E12}). Then 
\label{zerovp}
\begin{equation}
\label{eigenval}
E(\phi ) =0.
\end{equation}
Conversely, if $f\in L^2(\Gamma )$ is such that $ E(f)=0$, then $f=C \phi $ for some constant $C$.
\end{corollary}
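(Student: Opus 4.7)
The plan has two parts, both resting on identity (\ref{conservation10}) from Lemma \ref{Lemma20}. Introduce the shorthand $Q[h](k) := \sinh(k)\,h(k)/k$ already used in that proof, and let $\Delta[h](k,k') := Q[h](k) + Q[h](k') - Q[h](k+k')$, so that (\ref{conservation10}) reads
\[
\langle -E(f), g\rangle = \int_0^\infty\!\!\int_0^\infty \phi(k+k')\phi(k')\phi(k)\,\Delta[f](k,k')\,\Delta[g](k,k')\,dk\,dk'.
\]

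For the forward direction $E(\phi)=0$: since $\phi(k)=k^2/\sinh k$, a direct substitution gives $Q[\phi](k)=k$, whence $\Delta[\phi](k,k')=k+k'-(k+k')=0$ identically. A brief check using the asymptotics (\ref{E14})--(\ref{E15}) together with $\phi(k) = k^2/\sinh k$ shows $\phi\in L^2(\Gamma)$, so by Lemma \ref{Lemma2} we have $E(\phi)\in L^2(\Gamma^{-1})$. The displayed identity then yields $\langle E(\phi), g\rangle=0$ for every $g\in L^2(\Gamma)$, and since $L^2(\Gamma^{-1})$ is precisely the dual of $L^2(\Gamma)$ relative to the $L^2(\RR_+)$ pairing, this forces $E(\phi)=0$.

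For the converse, suppose $f\in L^2(\Gamma)$ satisfies $E(f)=0$. Taking $g=f$ in the identity above gives
\[
0 = \int_0^\infty\!\!\int_0^\infty \phi(k+k')\phi(k')\phi(k)\,\Delta[f](k,k')^2\,dk\,dk'.
\]
Since $\phi(k)>0$ on $(0,\infty)$, the integrand is nonnegative and strictly positive wherever $\Delta[f]\neq 0$; hence $\Delta[f]=0$ almost everywhere on $(0,\infty)^2$. Setting $F := Q[f]$, which is measurable because $f$ is, we obtain the Cauchy equation $F(k)+F(k')=F(k+k')$ for a.e.\ $(k,k')$. A classical measurable-solution argument (a measurable $F$ satisfying the additivity relation almost everywhere agrees a.e.\ with an everywhere-additive function, and a measurable additive function on the half-line is linear) produces a constant $C$ with $F(k)=Ck$. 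Unwinding $F = Q[f]$ gives $f(k)=C\,k^2/\sinh k = C\phi(k)$, as required.

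The main obstacle is the final step: passing from an almost-everywhere Cauchy identity for a merely measurable $F$ on $(0,\infty)$ to the pointwise conclusion $F(k)=Ck$. The first two steps are essentially a bookkeeping exercise on top of Lemma \ref{Lemma20}, whereas this functional-equation step is where genuine analysis enters, although the result itself is standard and can simply be cited.
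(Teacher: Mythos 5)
Your proposal is correct and follows essentially the same route as the paper: both directions rest on the quadratic-form identity (\ref{conservation10}), with $Q[\phi](k)=k$ killing the forward direction and the vanishing of the square forcing $Q[f]$ to satisfy the additive Cauchy equation in the converse. You simply make explicit two points the paper leaves implicit, namely the duality $L^2(\Gamma^{-1})=(L^2(\Gamma))'$ used to pass from $\langle E(\phi),g\rangle=0$ to $E(\phi)=0$, and the standard measurable-solution argument that upgrades the a.e.\ Cauchy identity to $Q[f](k)=Ck$.
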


\begin{proof}
By (\ref{conservation10}), $\langle E(\phi ), g\rangle=0$ for all $g\in L^2(\Gamma )$ and (\ref{zerovp}) follows.  On the other hand, if $ E(f) =0$ for some $f \in L^2(\Gamma )$, then $\langle E(f), f\rangle=0$ and by  (\ref{conservation10}):
$$
\left[\frac{\sinh(k)f(k)}{k}+\frac{\sinh(k')f(k')}{k'}-\frac{\sinh(k+k')f(k+k')}{k+k'}\right]^2=0
$$
for a. e. $k>0$, $k'>0$. The function $\frac{\sinh(k)f(k)}{k}$ must then be linear, and we must then have $f=C \phi $ for some positive constant $C$.
\end{proof}

\begin{corollary}
\label{S2Cor2}
For all $f\in L^2(\Gamma )$ and $g\in L^2(\Gamma )$:
\begin{equation}
\label{S2Cor2E1}
\left|\langle -E(f), g \rangle \right| \le \frac {1} {2}\langle -E(f), f \rangle+\frac {1} {2}\langle -E(g), g\rangle.
\end{equation}
\end{corollary}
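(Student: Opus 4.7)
The approach is to read the identity (\ref{conservation10}) as saying that $B(f,g):=\langle -E(f),g\rangle$ is a symmetric bilinear form which, evaluated on the diagonal, is the integral of a pointwise nonnegative quantity. Once this is recognized, (\ref{S2Cor2E1}) is just Cauchy--Schwarz followed by the arithmetic--geometric mean inequality.

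More precisely, I would first introduce, for $f\in L^2(\Gamma)$, the function
\begin{equation*}
\Phi[f](k,k'):=\frac{\sinh(k)f(k)}{k}+\frac{\sinh(k')f(k')}{k'}-\frac{\sinh(k+k')f(k+k')}{k+k'},
\end{equation*}
together with the nonnegative measure $d\mu(k,k'):=\phi(k+k')\phi(k')\phi(k)\,dk\,dk'$ on $(0,\infty)^2$. Then Lemma \ref{Lemma20} reads
\begin{equation*}
\langle -E(f),g\rangle = \int_0^\infty\!\!\int_0^\infty \Phi[f](k,k')\,\Phi[g](k,k')\,d\mu(k,k'),
\end{equation*}
so in particular $\langle -E(f),f\rangle = \|\Phi[f]\|_{L^2(d\mu)}^2\ge 0$ and the bilinear form is symmetric in $(f,g)$.

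Next I would apply the Cauchy--Schwarz inequality in $L^2(d\mu)$ to the right-hand side, obtaining
\begin{equation*}
\bigl|\langle -E(f),g\rangle\bigr|\le \|\Phi[f]\|_{L^2(d\mu)}\,\|\Phi[g]\|_{L^2(d\mu)}=\sqrt{\langle -E(f),f\rangle}\,\sqrt{\langle -E(g),g\rangle}.
\end{equation*}
Finally, the elementary inequality $\sqrt{a}\sqrt{b}\le \tfrac{1}{2}a+\tfrac{1}{2}b$ (valid for $a,b\ge 0$) applied with $a=\langle -E(f),f\rangle$ and $b=\langle -E(g),g\rangle$ yields (\ref{S2Cor2E1}).

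There is essentially no obstacle here: the only point requiring a bit of care is justifying that the integral defining $B(f,g)$ is absolutely convergent for $f,g\in L^2(\Gamma)$, so that Cauchy--Schwarz applies in $L^2(d\mu)$. This is ensured by Lemma \ref{Lemma2}, which gives $E:L^2(\Gamma)\to L^2(\Gamma^{-1})$ continuously, so $\langle E(f),g\rangle$ is well defined as a pairing of $L^2(\Gamma^{-1})$ with $L^2(\Gamma)$ and coincides with the $\mu$-integral representation of Lemma \ref{Lemma20}.
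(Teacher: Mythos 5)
Your proof is correct and follows essentially the same route as the paper: both rest on the bilinear integral representation of $\langle -E(f),g\rangle$ from Lemma \ref{Lemma20} with the nonnegative measure $d\mu=\phi(k+k')\phi(k')\phi(k)\,dk\,dk'$, the paper simply applying the pointwise inequality $|ab|\le\tfrac12 a^2+\tfrac12 b^2$ under the integral where you apply Cauchy--Schwarz in $L^2(d\mu)$ followed by the arithmetic--geometric mean inequality. The two arguments are interchangeable, and your remark on absolute convergence via Lemma \ref{Lemma2} is a reasonable extra precaution.
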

\begin{proof}
By (\ref{conservation10}) in Lemma \ref{Lemma20}:

$$
\left|\langle - E(f), g\rangle\right| \le  \int_0^\infty\int_0^\infty|q(f)(k, k')q(g)(k, k')| d\mu (k, k').
$$
where
$$
q(h)(k, k')=\frac{\sinh(k)h (k)}{k}+\frac{\sinh(k')h (k')}{k'}-\frac{\sinh(k+k')h (k+k')}{k+k'}
$$
and
$$
d\mu =\phi(k+k')\phi(k')\phi(k) dkdk'
$$
is a non negative measure. We deduce by Holder's inequality
\begin{eqnarray*}
\left|\langle - E(f), g\rangle\right| &\le& \frac {1} {2}\int_0^\infty\int_0^\infty |q(f)(k, k')|^2\!d\mu  +\\
&&+\frac {1} {2}\int_0^\infty\int_0^\infty|q(g)(k, k')|^2\!d\mu \\
&=&\frac {1} {2}\langle -E(f), f \rangle+\frac {1} {2}\langle -E(g), g\rangle.
\end{eqnarray*}

\end{proof}
As we have seen, the operator $E$ is continuous from  $ L^2(\Gamma )$ into $L^2(\Gamma ^{-1})$. By the Corollary \ref{zerovp}, its kernel, $N(E)$ is a one dimensional vector space generated by the function $\phi $. 

\begin{lemma}\label{LemmaCoercivity}
There exists a constant $C_*>0$ such that, for all $h\in L^2(\Gamma )$:
\begin{equation}
\label{SpectralGap1}
\langle -Eh, h\rangle \geq C_*\|h-\mathbb{P}h\|^2_{L^2(\Gamma )},
\end{equation}
where 
$$
\mathbb{P}h=c_0(h)\varphi _0,\,\,\,c_0(h)=\int _0^\infty h(k)\varphi _0(k)dk.
$$
\end{lemma}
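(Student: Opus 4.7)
The plan is to argue by contradiction after first reducing the inequality to test functions orthogonal to $\varphi_0$. Since $\varphi_0$ is a scalar multiple of $\phi$, Corollary \ref{coro} gives $E(\varphi_0)=0$, and the symmetry in $(f,g)$ of the bilinear form $\langle -Ef,g\rangle$ visible in (\ref{conservation10}) then yields the identity $\langle -E(h-\mathbb{P}h),h-\mathbb{P}h\rangle=\langle -Eh,h\rangle$ for all $h\in L^2(\Gamma)$. So it suffices to exhibit $C_*>0$ with $\langle -Eh,h\rangle\ge C_*\|h\|_{L^2(\Gamma)}^2$ whenever $h\in L^2(\Gamma)$ and $\langle h,\varphi_0\rangle=0$.

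Assuming no such $C_*$ exists, I would pick $(h_n)\subset L^2(\Gamma)$ with $\langle h_n,\varphi_0\rangle=0$, $\|h_n\|_{L^2(\Gamma)}=1$ and $\langle -Eh_n,h_n\rangle\to 0$, and extract a weak limit $h_n\rightharpoonup h$ in $L^2(\Gamma)$. The asymptotics (\ref{E14})--(\ref{E15}) of $\Gamma$ together with $\varphi_0=\phi/\|\phi\|_2$ ensure that $\varphi_0\in L^2(\Gamma^{-1})$, so the unweighted pairing against $\varphi_0$ is continuous under weak convergence in $L^2(\Gamma)$ and the orthogonality passes to the limit. The non-negative quadratic form $h\mapsto\langle -Eh,h\rangle$ is continuous on $L^2(\Gamma)$ by Lemma \ref{Lemma2}, hence weakly lower semicontinuous, so $\langle -Eh,h\rangle=0$. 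Corollary \ref{coro} then forces $h$ to be a scalar multiple of $\phi$, and hence of $\varphi_0$; combined with $\langle h,\varphi_0\rangle=0$, this gives $h=0$.

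To finish, I would write
$$
\langle -Eh_n,h_n\rangle=\|h_n\|_{L^2(\Gamma)}^2-\langle T_2 h_n,h_n\rangle=1-\langle T_2 h_n,h_n\rangle
$$
and show $\langle T_2 h_n,h_n\rangle\to 0$. Conjugating by the isometry $u\mapsto\sqrt{\Gamma}\,u$ from $L^2(\Gamma)$ onto $L^2(\RR_+)$, the operator $T_2:L^2(\Gamma)\to L^2(\Gamma^{-1})$ becomes the integral operator on $L^2(\RR_+)$ whose kernel is $2K(k,k')/\sqrt{\Gamma(k)\Gamma(k')}$; the finiteness of $C_0$ in (\ref{Lemma2E1}) is exactly the Hilbert--Schmidt condition on this kernel, hence the conjugate operator, and therefore $T_2$ itself, is compact. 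Since $h_n\rightharpoonup 0$ in $L^2(\Gamma)$, we conclude $T_2 h_n\to 0$ strongly in $L^2(\Gamma^{-1})$ and $\langle T_2 h_n,h_n\rangle\to 0$, yielding $0=\lim\langle -Eh_n,h_n\rangle=1$, the required contradiction.

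The main obstacle is controlling the degeneracy of $\Gamma$ at $k=0$: it is what makes nontrivial both the membership $\varphi_0\in L^2(\Gamma^{-1})$ (used to pass the orthogonality constraint to the weak limit) and the square integrability of $K/\sqrt{\Gamma\,\Gamma}$ (used for Hilbert--Schmidtness of the conjugate kernel). Both, however, are already settled by the precise asymptotics of $\Gamma$ and $K$ near the origin collected in the Appendix and used in the proof of Lemma \ref{Lemma2}.
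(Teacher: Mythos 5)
Your proof is correct, and it reaches the conclusion by a genuinely different mechanism than the paper, although both arguments rest on the same two ingredients: the Hilbert--Schmidt bound (\ref{kernelK2}) for the conjugated kernel $2K(k,k')/\sqrt{\Gamma(k)\Gamma(k')}$, and the characterization of the null space in Corollary \ref{coro}. Your opening reduction (using $E(\varphi_0)=0$ and the symmetry of the form in (\ref{conservation10}) to replace $h$ by $h-\mathbb{P}h$) matches the paper's first step. After that, the paper absorbs the orthogonality constraint into the operator by adding the rank-one term $c_0(h)^2$, conjugates by $\sqrt{\Gamma}$, and invokes the spectral theorem for the compact self-adjoint perturbation $T$ of the identity: the spectrum of $-\widetilde E=I-T$ is a sequence of non-negative eigenvalues accumulating only at $1$, and ruling out the eigenvalue $0$ yields $C_*=\min_j\lambda_j>0$. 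You instead keep the constraint explicit and run a contradiction argument with a normalized minimizing sequence: weak compactness of the unit ball of $L^2(\Gamma)$, continuity of $h\mapsto\langle h,\varphi_0\rangle$ on $L^2(\Gamma)$ (via $\varphi_0\in L^2(\Gamma^{-1})$) to pass the orthogonality to the limit, weak lower semicontinuity of the non-negative continuous quadratic form to identify the weak limit as $0$, and compactness of $T_2$ to force $\langle -Eh_n,h_n\rangle=1-\langle T_2h_n,h_n\rangle\to 1$. Your route avoids the spectral theorem and the observation that the eigenvalues of $I-T$ can only accumulate at $1$, at the cost of the weak-convergence bookkeeping; the paper's route produces $C_*$ as an explicit bottom eigenvalue. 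One small imprecision: what you actually need from Corollary \ref{coro} is the implication $\langle -Eh,h\rangle=0\Rightarrow h\in\mathrm{span}\{\phi\}$ rather than $E(h)=0\Rightarrow h\in\mathrm{span}\{\phi\}$; this is exactly what the corollary's proof establishes via (\ref{conservation10}) (and the paper invokes it in the same way in its own proof), so nothing is lost, but it deserves an explicit sentence.
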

\begin{remark}
The map $\mathbb{P}$  is the orthogonal projection  on the kernel $N(E)$ for the scalar product of $L^2(\RR_+)$. Since $\varphi _0\in L^2(\Gamma ^{-1})$ it is well defined for all $h \in L^2(\Gamma )$.
\end{remark}
\begin{proof}
For all $h\in L^2(\Gamma )$, we denote
\begin{eqnarray*}
h=c_0(h)\varphi _0+g,\,\,\,c_0(h)\varphi_0 =\mathbb{P}h \in N(E).
\end{eqnarray*}
Notice that,
$$
\int _0^\infty g(k)\varphi _0(k)dk=\int _0^\infty (h(k)-c_0(h)\varphi _0(k))\varphi _0(k)dk=0
$$
and so $g\in N(E)^\perp$.
Moreover, by Lemma \ref{Lemma20}, we deduce that
$$
\langle E(g), \mathbb{P}h\rangle=c_0(h)\langle E(g), \varphi _0\rangle=0
$$
and then,
$$
\langle Eh, h\rangle=\langle E(g), \mathbb{P}h+g\rangle=\langle E(g), g\rangle.
$$
Therefore, property (\ref{SpectralGap1}) is equivalent to
\begin{equation}
\label{SpectralGap1bis}
\forall g\in L^2(\Gamma ),\,\mathbb{P}g=0:\,\,\,\,\langle -Eg, g\rangle \geq C_*\|g\|^2_{L^2(\Gamma )}.
\end{equation}
In order to prove (\ref{SpectralGap1bis}), we show that for all $h\in L^2(\Gamma )$:
\begin{eqnarray}\label{SpectralGap1b}
&&\langle -Eh, h\rangle+c_0^2(h)\geq C_*\|h\|^2_{L^2(\Gamma )}.
\end{eqnarray}
To this end we make a change of unknown variable and define $g=\alpha h$, with $\alpha =\sqrt \Gamma $. The problem is now equivalent to prove that for all $g\in L^2(\RR_+)$:
\begin{eqnarray}\label{SpectralGap2}
& &\int _0^\infty|g(k)|^2dk-2\int _0^\infty \int _0^\infty {\frac{K(k,k')}{\alpha (k)\alpha (k')}}g(k')g(k)dk'dk\\\nonumber
& &+\int _0^\infty \int _0^\infty {\frac{ \varphi _0(k)\varphi _0(k')}{\alpha (k)\alpha (k')}}g(k')g(k)dk'dk\geq C_*\|g\|^2_{L^2}.
\end{eqnarray}
This follows from simple spectral properties of the operator $\widetilde E=-I+T$ with 
$$T: g\to \int _0^\infty {\frac{2K(k,k')}{\alpha (k)\alpha (k')}}g(k')dk'-\int _0^\infty {\frac{ \varphi _0(k)\varphi _0(k')}{\alpha (k)\alpha (k')}}g(k')dk'.$$ 
Since the two functions $\frac{2K(k,k')}{\alpha (k)\alpha (k')}$ and $\frac{ \varphi _0(k)\varphi _0(k')}{\alpha (k)\alpha (k')}$ belong to $L^2(\RR_+\times \RR_+)$, (for the first function this is proved  in detail in Lemma \ref{kernelK} of the Appendix), the operator $T$ is a Hilbert Schmidt, and then a compact, operator from $L^2(\RR_+)$ into itself. Its spectrum is then reduced to a sequence $(\mu _j) _{ j\in \NN }$ of eigenvalues satisfying $\mu _j\to 0$ as $j\to \infty$. The spectrum of $-\widetilde E$ is then also reduced to a sequence $(\lambda  _j) _{ j\in \NN }$ of eigenvalues  such that $\lambda  _j\to 1$ as $j\to \infty$. Since the operator $-E$  is non negative on $L^2(\Gamma )$ it is easy to deduce that $-\widetilde E$ is non negative on $L^2(\RR_+)$, and then $\lambda  _j\ge 0$ for all $j\in \NN$. In order to prove (\ref{SpectralGap2}) we then only need to show that zero is not an eigenvalue of $-\widetilde E$. If that was the case, any associated eigenfunction
  $g\in L
 ^2(\RR_+)$ would satisfy $\widetilde E (g)=0$ and then, multiplying by $g$ and integrating 
$$
-\left\langle  E \left(\frac {g} {\alpha }\right), \frac {g} {\alpha }\right \rangle+\int _0^\infty \int _0^\infty {\frac{ \varphi _0(k)\varphi _0(k')}{\alpha (k)\alpha (k')}}g(k')g(k)dk'dk=0.
$$
But this would imply that, for the function $h=\frac {g} {\alpha }\in L^2(\Gamma )$, we have
$$
\langle -Eh, h\rangle+c_0(h)^2=0.
$$
Since $\langle -Eh, h\rangle \ge 0$ this implies that $\langle -Eh, h\rangle=0$ and $c_0(h)=0$. By Corollary \ref{zerovp}, the first condition implies that $h\in N(E)$. Then we deduce from the second that $h=0$ and then $g=0$. This proves that zero is not an eigenvalue of $\widetilde E$ and we deduce that
$$
C_*=\min  _{ j\in \NN }\lambda _j>0.
$$
Property  (\ref{SpectralGap2}) follows, and then also  (\ref{SpectralGap1b}) for all $g\in L^2(\Gamma )$ and   (\ref{SpectralGap1bis}) for all $g\in L^2(\Gamma )$ such that $\mathbb{P}g=0$. This concludes the proof of (\ref{SpectralGap1}).
\end{proof}

\section{Existence and uniqueness of global solution.}
\label{SectionExistence}
In this Section we prove that the Cauchy problem (\ref{E8bis})-(\ref{E12b}) is well posed in $L^2(\RR_+)$. More precisely, we have the following proposition that is the first part of Theorem $\ref{BoltzmannPhononTheoremExponentialDecay}$.
\begin{proposition}
\label{TheoremExistence} 
Suppose that  $f_0\in L^2(\mathbb{R}_+)$.  Then,  the problem  (\ref{E8bis})--(\ref{E12b}) has a unique solution  $f$ such that
\begin{eqnarray}
&&(f-\mathbb{P}(f_0))\in L^2((0,\infty),  L^2(\Gamma )),\label{propsol0} \\
&&f\in L^\infty((0,\infty), L^2(\mathbb{R}_+))\cap C([0,\infty), L^2(\mathbb{R}_+)),\label{propsol1}\\
&&\partial _t f\in L^2((0, \infty), L^2(\Gamma ^{-1})),\label{propsol2}
\end{eqnarray}
that satisfies the equation  (\ref{E8bis}) in $L^2((0, T); L^2(\Gamma ^{-1}))$ for all $T>0$ and takes the initial data in the following sense:
\begin{eqnarray}
\label{propsol30}
\lim _{ t\to 0 }\left(||f(t)-f_0|| _{ L^2(\Gamma ^{-1}) }+||f(t)-f_0|| _{2}\right)=0.
\end{eqnarray}
This solution is such that, for all $\varphi \in L^2(\Gamma )$:
\begin{eqnarray}
\label{conservation1}
&&\frac {d} {dt}\int _0^\infty f(t, k)\varphi  (k)dk=\int_0^\infty\int_0^\infty\phi(k+k')\phi(k')\phi(k)\times \\
& &\times\left[\frac{\sinh(k)f(k)}{k}+\frac{\sinh(k')f(k')}{k'}-\frac{\sinh(k+k')f(k+k')}{k+k'}\right]\times\nonumber\\
& &\times\left[\frac{\sinh(k)\varphi (k)}{k}+\frac{\sinh(k')\varphi (k')}{k'}-\frac{\sinh(k+k')\varphi (k+k')}{k+k'}\right]\!dkdk'.\nonumber
\end{eqnarray}
In particular, for all $t>0$:
\begin{eqnarray}
\label{conservation2}
\frac {d} {dt}\int _0^\infty\!\!\! f(t, k)\frac {k^2 dk} {\sinh(k)}=0.
\end{eqnarray}
Moreover, for all $t>0$:
\begin{eqnarray}
\label{propsol3}
||f(t)||_2^2+2C_*\int _0^\infty||f(t)-\mathbb{P}(f_0)|| ^2_{ L^2(\Gamma ) }dt\le 2||f_0||_2^2 .
\end{eqnarray}
and
\bear
\label{propsol3250We}
\left|\left|  \frac {\partial f} {\partial t} \right|\right|_{ L^2(0, \infty;  L^2(\Gamma ^{-1})) }\le 
 (1+2C_0)||f-\mathbb{P}(f_0)||_{ L^2(0, \infty;  L^2(\Gamma ))  },
\eear
where the  constant $C_0$ is defined in (\ref{Lemma2E1}).

\noindent
If $f_0\ge 0$ then for all $t>0$, $f(t, k)\ge 0$ for a.e. $k>0$.
\end{proposition}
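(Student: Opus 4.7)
I would put the problem into the Gelfand triple $V := L^2(\Gamma ) \hookrightarrow H := L^2(\RR_+) \hookrightarrow V' = L^2(\Gamma ^{-1})$ suggested by Section \ref{SectionOperatorE}. The bilinear form $a(u,v) := \langle -E(u), v\rangle$ on $V$ is symmetric (Lemma \ref{Lemma20}), continuous with norm $\le 1+2C_0$ (Lemma \ref{Lemma2}), non negative (Corollary \ref{coro}), and coercive modulo $N(E) = \RR \varphi _0$ (Lemma \ref{LemmaCoercivity}). Since $-E$ is coercive only modulo its one dimensional kernel, the Lions variational theory cannot be applied directly to $f$. The crucial observation is that $E(\mathbb{P}f_0) = 0$, so the shifted unknown $g(t) := f(t) - \mathbb{P}f_0$ satisfies the same equation $\partial _t g = E(g)$ with datum $g_0 = f_0 - \mathbb{P}f_0 \in N(E)^\perp \cap H$; symmetry of $E$ gives $\frac{d}{dt}\langle g(t), \varphi _0\rangle = \langle g(t), E(\varphi _0)\rangle = 0$, so the orthogonality $\mathbb{P}g(t) = 0$ propagates. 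On the invariant subspace $V_0 := \{h \in V : \mathbb{P}h = 0\}$ the form $a$ is genuinely coercive with constant $C_*$.

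\textbf{Existence and estimates.} On $V_0$ the classical Lions--Magenes theory applies directly; alternatively I would run a Galerkin scheme relative to a $V$-orthonormal basis of $V_0$, solve the resulting finite dimensional linear ODE system (well posed by boundedness of $E$), and derive uniformly
\begin{equation*}
\tfrac12\|g_n(t)\|_H^2 + C_* \int _0^t \|g_n(s)\|_V^2 \, ds \le \tfrac12\|g_0\|_H^2
\end{equation*}
by testing against $g_n$ and invoking Lemma \ref{LemmaCoercivity}. Combined with $\|\partial _t g_n\|_{V'} \le (1+2C_0)\|g_n\|_V$ from Lemma \ref{Lemma2}, these bounds give weak limits $g \in L^2(0,\infty;V)$ with $\partial _t g \in L^2(0,\infty;V')$, and the standard embedding yields $g \in C([0,\infty);H)$. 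Since $\mathbb{P}f_0$ and $g(t)$ are $H$-orthogonal, $\|f(t)\|_H^2 = \|\mathbb{P}f_0\|_H^2 + \|g(t)\|_H^2$, from which (\ref{propsol3}) follows, and (\ref{propsol3250We}) is inherited through $\partial _t f = E(f) = E(g)$ together with Lemma \ref{Lemma2}. The two limits in (\ref{propsol30}) follow respectively from the continuity $f \in C([0,\infty);H)$ and from the absolute continuity of $f(t) - f_0 = \int _0^t E(f(s))\,ds$ viewed as an integral in $V'$.

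\textbf{Uniqueness, conservation, positivity, and obstacle.} Uniqueness is immediate from linearity and the energy estimate, since the difference of two solutions lies in $V_0$ and has zero $H$-norm at $t=0$. The identity (\ref{conservation1}) is obtained by testing the equation against $\varphi \in V$ and applying Lemma \ref{Lemma20}; choosing $\varphi = \phi$ makes the trilinear bracket $[k + k' - (k+k')]$ in that identity identically zero, giving the energy conservation (\ref{conservation2}). For the positivity statement I would invoke the Duhamel representation
\begin{equation*}
f(t,k) = e^{-\Gamma (k)t}f_0(k) + 2\int _0^t e^{-\Gamma (k)(t-s)}\int _0^\infty K(k,k')f(s,k')\,dk'\,ds,
\end{equation*}
noting that $K(k,k') = (\phi (|k-k'|) - \phi (k+k'))kk' \ge 0$ since $\phi$ is decreasing on $(0,\infty)$ and $|k-k'| \le k+k'$; a Picard iteration starting from the non negative leading term yields a monotone non negative sequence converging to the unique local solution, and global existence extends the sign property to all $t>0$. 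The main obstacle is the partial coercivity of $-E$: a direct energy estimate for $f$ would control only $\|f - \mathbb{P}f_0\|_V$, and neither Hille--Yosida nor Lax--Milgram applies to the full operator. The clean resolution is the splitting $f = \mathbb{P}f_0 + g$ combined with the conservation $\mathbb{P}f(t) = \mathbb{P}f_0$ produced by the $L^2$-symmetry of $E$ and the relation $E(\varphi _0) = 0$; this transfers the entire analysis to the coercive subspace $V_0$.
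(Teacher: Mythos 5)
Your plan rests on the Gelfand triple $V=L^2(\Gamma)\hookrightarrow H=L^2(\RR_+)\hookrightarrow V'=L^2(\Gamma^{-1})$, but this triple does not exist. By Lemma \ref{Gamma}, $\Gamma(k)\sim \pi^4k/15$ as $k\to 0$, so the weight $\Gamma$ vanishes at the origin and $L^2(\Gamma)\not\subset L^2(\RR_+)$ (take $u(k)=k^{-3/4}$ near $0$); and since $\Gamma(k)\sim k^5/15$ at infinity, $L^2(\RR_+)\not\subset L^2(\Gamma)$ either. Consequently the Lions--Magenes theorem, the Galerkin energy identity $\tfrac12\tfrac{d}{dt}\|g_n\|_H^2=\langle E g_n,g_n\rangle$, and above all the ``standard embedding'' $\{g\in L^2(0,\infty;V),\ \partial_t g\in L^2(0,\infty;V')\}\subset C([0,\infty);H)$ are all unavailable as stated: each requires $V\subset H\subset V'$. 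This unbounded-from-below, unbounded-from-above collision frequency is exactly the difficulty the paper's proof is built to circumvent: it truncates $\Gamma$, $K$ and $f_0$ to $\{1/n<k<n\}$, solves the resulting bounded problem by $e^{tE_n}$ in $L^2(\RR_+)$, derives uniform bounds from the coercivity of Lemma \ref{LemmaCoercivity}, and then shows $(f_n)$ is Cauchy simultaneously in $L^2(0,T;L^2(\Gamma))$ and in $C([0,T];L^2(\RR_+))$; that Cauchy property is where the time continuity, the attainment of the initial data \eqref{propsol30}, and the passage to the limit in the equation come from. A variational repair of your route would at least require taking $V=L^2(\Gamma)\cap L^2(\RR_+)$ with a G\aa rding inequality, and even then the global-in-time bounds \eqref{propsol3} and \eqref{propsol3250We} would still need the separate conservation argument $\mathbb{P}f(t)=\mathbb{P}f_0$ that you correctly identify.

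There is a second, independent gap in your positivity argument: you claim $K\ge 0$ because ``$\phi$ is decreasing'', but $\phi(k)=k^2/\sinh k$ vanishes at $k=0$, increases near the origin and only then decays; on the diagonal one has $K(k,k)=(\phi(0)-\phi(2k))k^2=-k^2\phi(2k)<0$. The kernel is therefore not non-negative, and the monotone Picard iteration does not produce a non-negative sequence. The remaining items --- the shift $g=f-\mathbb{P}f_0$ using $E(\varphi_0)=0$, the propagation of $\mathbb{P}g(t)=0$, the derivation of \eqref{conservation1}--\eqref{conservation2} by testing with $\varphi\in L^2(\Gamma)$ and $\varphi=\phi$, and the uniqueness via the energy estimate --- do agree with the paper and would be fine once the existence construction and the positivity proof are repaired.
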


\begin{proof} \textit{Step 1: Uniqueness.}  We first prove that if there is a solution of  (\ref{E8bis})-(\ref{E12b}) satisfying  (\ref{propsol1})-(\ref{propsol2}), then it is unique.  Since the equation is linear it is sufficient to prove that the only solution  of  (\ref{E8bis})-(\ref{E12b}) satisfying  (\ref{propsol1})-(\ref{propsol2}) with initial data $f_0=0$ is the function such that $f(t)=0$ for all $t>0$. To this end, we multiply the equation (\ref{E8bis}) by $f$ and integrate on $k>0$ to obtain:
$$
\frac {1} {2}\frac {d} {dt}||f(t)||_2^2=\langle E(f), f\rangle.
$$
Since $c_0=0$ by hypothesis, we deduce using (\ref{SpectralGap1b}):
$$
\frac {1} {2}\frac {d} {dt}||f(t)||_2^2\le -C_*||f(t)||^2 _{ L^2(\Gamma ) }.
$$
If we now integrate this in time:
$$
||f(t)||_2^2+2C_*\int _0^t||f(s)||^2 _{ L^2(\Gamma ) }ds\le 0
$$
since $||f(0)||_2^2=||f_0||_2^2$ by the continuity of the application $t\mapsto ||f(t)||_2$ and uniqueness then follows.
\\\textit{Step 2.} We define the following truncation of the operator $E$ and the initial data $f_0$:
\begin{eqnarray}
&&E_n[h]\equiv -\Gamma_n (k)\,h(k)+T_{n2}[h],\label{Existence1}\\
&&\Gamma_n (k)=\Gamma (k)\chi_{n}(k),\label{Existence3}\\
&&T_{n2}[h]=2\int _0^\infty K_n(k, k') h(k')dk',  \label{Existence2}\\
&&K_n(k, k')=\chi_{n}(k) \chi_{n}(k')K(k, k'),\label{Existence4}\\
&&f_{0, n} (k)=\chi_{n}(k)f_0(k). \label{Existence6}\\
&&\chi_n(k)=\chi_{\{1/n<|k|<n\}}\nonumber,
\end{eqnarray}
where $\chi_{ A}$ is the characteristic function of the set $A$.

For every $n\in \NN$,  $E_n$ is now a linear and bounded operator  from $L^2(\RR_+)$ into itself.  Therefore, the linear problem
\begin{eqnarray}
&&\frac {\partial f} {\partial t}(t, k)=E_n(f)(t, k),\,\,\,t>0, k>0,\label{truncequation1} \\
&&f(0, k)=f_{0, n} (k),\,\,\,k>0,\label{truncequation2}
\end{eqnarray}
has a solution:
\begin{eqnarray*}
\label{truncsolution1}
f_n(t, k)=e^{t E_n}(f _{ 0, n })
\end{eqnarray*}
satisfying
\begin{eqnarray}
\label{truncsolution2}
f_n\in C([0, \infty); L^2(\RR_+))\cap C^\infty (0, \infty; L^2(\RR_+)).
\end{eqnarray}

The same argument as in Step 1 shows that $f_n$ is unique. If moreover $f_0\ge 0$, then $f _{ 0, n }\ge 0$ and then $f_n(t)\ge 0$ for all $t>0$. 

Since $supp(f_{0, n})\subset (1/n,n)$, $supp(\Gamma_n)\subset (1/n,n)$ and $supp(K_n)\subset (1/n,n)\times (1/n,n)$, we have
$supp(f_n(t))\subset (1/n,n)$ for all $t>0$, and therefore:
$$E_nf_n=(Ef_n)\chi_{n}(k),$$
where $\chi_{ (1/n,n)}$ is the characteristic function of ${ (1/n,n)}$. 
The function $f_n$ solves then:
\begin{eqnarray}
\frac {\partial f_n} {\partial t}=(Ef_n)\chi_{n}(k)\label{nequation}\\
f_n(0)=f _{ 0, n }\label{ndata}
\end{eqnarray}
Multiplying (\ref{nequation})  by $f_n$ we obtain,  after integration on $(0, t)\times \RR_+$:
\begin{equation}
\label{S2E1}
\|f_n(t)\|_2^2-\|f_{0, n}\|_2^2=2\int_0^t\langle (Ef_n(s))\chi_{n}, f_n(s)\rangle dt.
\end{equation}
Since for all $s>0$ $supp(f_n(s))\subset (1/n,n)$ we notice first that:
$$
\langle (Ef_n(s))\chi_{n}, f_n(s)\rangle=\langle Ef_n(s), f_n(s)\rangle
$$
and second, that  $f_n(t)\in L^2(\Gamma )$. Then, by (\ref{SpectralGap1b}) in the proof of Lemma \ref{LemmaCoercivity}, we deduce, for all $T>0$: 
\begin{eqnarray}
||f_n(T)||_2^2+2C_*\int_0^T||f_n(t)-\mathbb{P}f_n(t)||^2 _{ L^2(\Gamma ) }dt & \le & \|f_{0, n}\|_2^2.\label{S3E10}
\end{eqnarray}
It first follows from (\ref{S3E10}) that for all $t\ge 0$
\begin{equation}
||f_n(t)||_2^2 \le  \|f_{0}\|_2^2.\label{S3E1000}
\end{equation}
Using (\ref{S3E1000}) we obtain that for all $t>0$:
\begin{eqnarray*}
||\mathbb{P}f_n(t)||^2 _{ L^2(\Gamma ) }&=&\left(\int _0^\infty f_n(t, k)\varphi _0(k)dk\right)^2||\varphi _0||^2 _{ L^2(\Gamma ) }\\
&\le & ||f_n(t)||_2^2 ||\varphi _0||^2 _{ L^2(\Gamma ) }\le  \|f_{0}\|_2^2||\varphi _0||^2 _{ L^2(\Gamma ) }
\end{eqnarray*}
and then, using this in (\ref{S3E10}):

\begin{eqnarray}\nonumber
&&||f_n(T)||_2^2+C_*\int_0^T||f_n(t)||^2 _{ L^2(\Gamma ) }dt\le \|f_{0}\|_2^2\left(1+2C_*T||\varphi_0||^2_{ L^2(\Gamma ) }\right)\label{S3E10b}\\
&&\int_0^T||f_n(t)||^2 _{ L^2(\Gamma ) }dt\le \frac {\|f_0\|_2^2} {C_*}\left(1+2C_*T||\varphi_0||^2_{ L^2(\Gamma ) }\right).\label{S3E11}
\end{eqnarray}

By (\ref{S3E11}), the sequence $(f_n) _{ n\in \NN }$ is then bounded in $L^2(0, T; L^2(\Gamma ))$ for all $T>0$.  We prove now that it is also a Cauchy sequence in that space.

To this end, let $n, m$ be two positive  integers such that for example $m>n$.  By  (\ref{truncequation1}):
\begin{eqnarray}
\frac {\partial } {\partial t}(f_n-f_m)&=&E_nf_n-E_mf_m\label{nmequation}\\
f_n(0)-f_m(0)&=&f _{ 0, n }-f _{ 0, m }\label{nmdata}
\end{eqnarray}
After multiplication by $f_n-f_m$ and integration over $(0, \infty)$ we deduce as usual
\begin{eqnarray}
\|f _{n}(t)-f _{m}(t)\|_2^2-\|f _{ 0, n }-f _{ 0, m }\|_2^2
=2\int_0^t\left(\left<E_nf_n,f _{n}-f _{m}\right>\right. \nonumber \\
\left.-\left<E_mf_m,f _{n}-f _{m}\right>\right)dt. \label{S3Ek1}
\end{eqnarray}
We decompose the function $f_m$ as follows:
\begin{eqnarray}
&&f_m(t, k)=f _{ m, n }(t, k)+\varphi  _{ m, n }(t, k)\label{S3Ek2}\\
&&f _{ m, n }(t, k)=f_m(t, k)\chi_n(k) \label{S3Ek3}\\
&&\varphi  _{ m, n }(t, k)=f _{ m }(t, k)\left(\chi _{ m }(k)-\chi_n(k)\right)\label{S3Ek4}
\end{eqnarray}
and use this to rewrite  the two  right hand side terms of (\ref{S3Ek1}).  We have first:
\begin{eqnarray}
&&\left<E_nf_n,f _{n}-f _{m}\right>=-\int_0^\infty\Gamma_n (k)f_n(k)(f _{n}(k)-f _{m}(k))dk+\label{S3Ek5}\\
&&+\int_{\mathbb{R}_+^2}K_n (k,k')f_n(k')(f _{n}(k)-f _{m}(k))(k)dk'dk=J_1+J_2. \nonumber
\end{eqnarray}
Since the supports of $f_n$ and $\varphi  _{ m, n }$ are disjoint we have:
\begin{eqnarray}
J_1&=&-\int_0^\infty\Gamma_n (k)f_n(k)(f _{n}(k)-f _{m}(k))dk\nonumber \\
&=&-\int_0^\infty\Gamma_n (k)f_n(k)(f _{n}(k)-f _{m, n}(k))dk\nonumber\\
&=&-\int_0^\infty\Gamma(k)f_n(k)(f _{n}(k)-f _{m, n}(k))dk\label{S3Ek6}
\end{eqnarray}
Using that for any $k'>0$ the supports of $K_n(\cdot, k')$ and $\varphi  _{ m, n }$ are also disjoints we obtain:
\begin{eqnarray}
J_2&=&\int_{\mathbb{R}_+^2}K_n (k,k')f_n(k')(f _{n}(k)-f _{m}(k))(k)dk'dk\nonumber \\
&=&\int_{\mathbb{R}_+^2}K_n (k,k')f_n(k')(f _{n}(k)-f _{m, n}(k))(k)dk'dk\nonumber\\
&=&\int_{\mathbb{R}_+^2}K (k,k')f_n(k')(f _{n}(k)-f _{m, n}(k))(k)dk'dk\label{S3Ek7}
\end{eqnarray}
By (\ref{S3Ek6}) and (\ref{S3Ek7}), we deduce from (\ref{S3Ek5}) that
\begin{equation}
\left<E_nf_n,f _{n}-f _{m}\right>=\left<E f_n, f _{n}-f _{m, n}\right>. \label{S3Ek8}
\end{equation}
On the other hand, 
\begin{eqnarray}
&&\left<E_mf_m,f _{n}-f _{m}\right>=-\int_{\mathbb{R}_+}\Gamma_m (k)f_m(k)(f _{n}(k)-f _{m}(k))dk+\nonumber\\
&&+\int_{\mathbb{R}_+^2}K_m (k,k')f_m(k')(f _{n}(k)-f _{m}(k))(k)dk'dk=L_1+L_2. \label{S3Ek9}
\end{eqnarray}
We have now
\begin{eqnarray}
L_1&=&-\int_0^\infty\Gamma_m (k)(f_{m, n}(k)+\varphi  _{ m, n }(k))(f _{n}(k)-f _{m, n}(k)-\varphi  _{ m, n }(k))dk\nonumber \\
&=&-\int_0^\infty\Gamma_m (k)f_{m, n}(k)(f _{n}(k)-f _{m, n}(k))dk+\nonumber \\
&&\hskip 4cm +\int_0^\infty\Gamma_m (k)f_{m, n}(k)\varphi  _{ m, n }(k)dk\nonumber \\
&&-\int_0^\infty\Gamma_m (k)\varphi  _{ m, n }(k)(f _{n}(k)-f _{m, n}(k))dk+\nonumber \\
&&\hskip 4cm +\int_0^\infty\Gamma_m (k)\varphi  _{ m, n }(k)\varphi  _{ m, n }(k)dk. \label{S3Ek10}
\end{eqnarray}
Using the properties of the support of the functions $f_n$, $f _{ m, n }$, $\Gamma _n$  and $\varphi  _{ m, n }$ we deduce as above that the second and third terms in the right hand side of (\ref{S3Ek10}) are zero, from where:
\begin{eqnarray}
L_1&=&-\int_0^\infty\Gamma (k)f_{m, n}(k)(f _{n}(k)-f _{m, n}(k))dk+\nonumber\\ 
&&\hskip 3cm +\int_0^\infty\Gamma (k)\varphi  _{ m, n }(k)\varphi  _{ m, n }(k)dk
\label{S3Ek11}
\end{eqnarray}
Consider now $L_2$, that may be written as follows:
\begin{eqnarray}
&&\hskip -0.5cm L_2= \int_{\mathbb{R}_+^2}\!\!K_m (k,k')(f_{m, n}(k')+\varphi  _{ m, n }(k')) (f _{n}(k)-f _{m, n}(k)-\varphi  _{ m, n }(k))dk'dk  \nonumber \\
&&=\int_{\mathbb{R}_+^2}K_m (k,k')f_{m, n}(k')(f _{n}(k)-f _{m, n}(k))dk'dk  \nonumber \\
&&\hskip 2cm - \int_{\mathbb{R}_+^2}K_m (k,k')f_{m, n}(k')\varphi  _{ m, n }(k)dk'dk+\nonumber \\
&&+\int_{\mathbb{R}_+^2}K_m (k,k')\varphi _{m, n}(k')(f _{n}(k)-f _{m, n}(k))dk'dk\nonumber \\
&&\hskip 2cm-\int_{\mathbb{R}_+^2}K_m (k,k')\varphi _{m, n}(k')\varphi _{m, n}(k)dk'dk.
\label{S3Ek12}
\end{eqnarray}
We rewrite $L_2$ as follows:
\begin{eqnarray}
L_2
&=&\int_{\mathbb{R}_+^2}K (k,k')f_{m, n}(k')(f _{n}(k)-f _{m, n}(k))dk'dk  \nonumber \\
&-&\int_{\mathbb{R}_+^2}K (k,k')\varphi _{m, n}(k')\varphi _{m, n}(k)dk'dk + R _{ m, n }(t),\label{S3Ek14}\\
R _{ m, n }(t)&=&\int_{\mathbb{R}_+^2}K_m (k,k')\varphi _{m, n}(k')(f _{n}(k)-f _{m, n}(k))dk'dk\nonumber \\
&&\hskip 1cm - \int_{\mathbb{R}_+^2}K_m (k,k')f_{m, n}(k')\varphi  _{ m, n }(k)dk'dk.\label{S3Ek15}
\end{eqnarray}
It follows from (\ref{S3Ek9}), (\ref{S3Ek11}) and (\ref{S3Ek14}) that:
\begin{eqnarray*}
&&\left<E_mf_m,f _{n}-f _{m}\right>=-\int_0^\infty\Gamma (k)f_{m, n}(k)(f _{n}(k)-f _{m, n}(k))dk+\nonumber\\ 
&&\hskip 4.8cm +\int_0^\infty\Gamma (k)\varphi  _{ m, n }(k)\varphi  _{ m, n }(k)dk+\nonumber \\
&&\hskip 3cm +\int_{\mathbb{R}_+^2}K (k,k')f_{m, n}(k')(f _{n}(k)-f _{m, n}(k))dk'dk  \nonumber \\
&&\hskip 3cm -\int_{\mathbb{R}_+^2}K (k,k')\varphi _{m, n}(k')\varphi _{m, n}(k)dk'dk + R _{ m, n }(t, k)
\end{eqnarray*}
and then
\begin{equation}
\left<E_mf_m,f _{n}-f _{m}\right>=\left<Ef_{m, n},f _{n}-f _{m}\right>-\left<E\varphi _{m, n}, \varphi  _{ m, n }\right>+R _{ m, n }(t).
\label{S3Ek16}
\end{equation}
We deduce, using (\ref{S3Ek8}) and (\ref{S3Ek16}) that
\begin{eqnarray}
\left<E_nf_n,f _{n}-f _{m}\right>-\left<E_mf_m,f _{n}-f _{m}\right>=\left<E(f_n-f_{m, n}),f _{n}-f _{m}\right>+\nonumber \\
+\left<E\varphi _{m, n}, \varphi  _{ m, n }\right>
+R _{ m, n }(t). \label{S3Ek17}
\end{eqnarray}
By (\ref{SpectralGap1}) in  Lemma  \ref{LemmaCoercivity} we deduce
\begin{eqnarray}
&&\left<E_nf_n,f _{n}-f _{m}\right>-\left<E_mf_m, f _{n}-f _{m}\right> \le -  C_*\|(\mathbb{I}-\mathbb{P})(f_n-f _{ m, n })\|^2_{L^2(\Gamma )}\nonumber \\
&&\hskip 4.2cm -C_*\|(\mathbb{I}-\mathbb{P})\varphi  _{ m, n }\|^2_{L^2(\Gamma )}+|R _{ m, n }(t)|, \label{S3Ek18}
\end{eqnarray}
where  $\mathbb{I}$ is the identity operator. 

On the other hand, since
$$
\|(\mathbb{I}-\mathbb{P})(f_n-f_m)\|^2_{L^2(\Gamma )}\le \|(\mathbb{I}-\mathbb{P})(f_n-f_{m, n})\|^2_{L^2(\Gamma )}+\|(\mathbb{I}-\mathbb{P})\varphi  _{ m, n }\|^2_{L^2(\Gamma )}
$$
it follows that
\begin{eqnarray}
\left<E_nf_n,f _{n}-f _{m}\right>-\left<E_mf_m, f _{n}-f _{m}\right> \le -C_*\|(\mathbb{I}-\mathbb{P})(f_n-f _{ m})\|^2_{L^2(\Gamma )} \nonumber \\
+|R _{ m, n }(t)|.
 \label{S3Ek19}
\end{eqnarray}
We now estimate $R _{ m, n }$ given by (\ref{S3Ek15}). Since $K(k, k')=K(k', k)$ for all $k>0$, $k'>0$ it is easy to check that this term may be written as follows
\begin{eqnarray}
&&R _{ m, n }(t)=\int_{\mathbb{R}_+^2}K(k, k')\chi _{ m }(k')(\chi _{ m }(k')-\chi_n(k'))f_m(k')f_n(k)dk'dk \nonumber \\
&&\hskip 0.6cm -2\int_{\mathbb{R}_+^2}K(k, k')\chi _{n}(k)(\chi _{ m }(k')-\chi_n(k'))f_m(k')f_m(k)dk'dk.\label{S3Ek20}
\end{eqnarray}
from where we deduce the following estimate:
\begin{eqnarray}
|R _{ m, n }(t)|&\le&\int_{\mathbb{R}_+^2}K(k, k')\chi _{ m }(k')(\chi _{ m }(k')-\chi_n(k'))|f_m(k')||f_n(k)|dk'dk \nonumber \\
&&+2\int_{\mathbb{R}_+^2}K(k, k')\chi _{ m }(k')(\chi _{n }(k')-\chi_m(k'))|f_m(k')||f_m(k)|dk'dk \nonumber \\
&\le & \rho  _{ n, m } \left(||f_n|| _{ L^2(\Gamma ) }||f_m|| _{ L^2(\Gamma ) }+2||f_m||^2 _{ L^2(\Gamma ) } \right)\label{S3Ek21}\\
\rho  _{ n, m }&=&\left\|\frac{K (k,k')(\chi _{ m }(k')-\chi_n(k'))}{\sqrt\Gamma(k)\sqrt\Gamma(k')}\right\|_{L^2(\mathbb{R}_+^2)}\label{S3Ek21bis}
\end{eqnarray}
Using now  that $\frac{K (k,k')}{\sqrt\Gamma(k)\sqrt\Gamma(k')}\in L^2(\mathbb{R}_+^2) $ and  the dominated convergence Theorem, it is easy to check  that
\begin{equation}
\label{S3Ek22}
\lim _{n\to \infty, m>n }\rho(n, m)=0
\end{equation}
Combining now (\ref{S3Ek1}) and   (\ref{S3Ek19}):
\begin{eqnarray}
\|f _{n}(t)-f _{m}(t)\|_2^2+2C_*\int _0^t\|(\mathbb{I}-\mathbb{P})(f_n-f _{ m})\|^2_{L^2(\Gamma )}ds\le\nonumber \\
\le \|f _{ 0, n }-f _{ 0, m }\|_2^2+\int _0^t|R _{ m, n }(s)|ds.\label{S3Ek23}
\end{eqnarray}
On the other hand, since
\begin{eqnarray*}
||\mathbb{P}(f _{n}(t)-f _{m}(t))||^2 _{ L^2(\Gamma ) }
&\le & ||f_n(t)-f _{m}(t)||_2^2 ||\varphi _0||^2 _{ L^2(\Gamma ) },
\end{eqnarray*}
we have by  \eqref{S3Ek23}:
\begin{equation*}
||\mathbb{P}(f _{n}(t)-f _{m}(t))||^2 _{ L^2(\Gamma ) }
\le  \left(\|f _{ 0, n }-f _{ 0, m }\|_2^2 +\!\!\int _0^t|R _{ m, n }(s)|ds\right)||\varphi _0||^2 _{ L^2(\Gamma ) }.
\end{equation*}
Integrating both sides of this inequality with respect to $t$, we deduce
\begin{eqnarray*}
&&\int_0^t||\mathbb{P}(f _{n}(t)-f _{m}(t))||^2 _{ L^2(\Gamma ) }ds \le  t\left(\|f _{ 0, n }-f _{ 0, m }\|_2^2 +\right. \\
&&\hskip 6cm \left. +\int _0^t|R _{ m, n }(s)|ds\right)||\varphi _0||^2 _{ L^2(\Gamma ) },
\end{eqnarray*}
and then,
\begin{eqnarray}
&&\|f _{n}(t)-f _{m}(t)\|_2^2+2C_*\!\!\int _0^t\!\!\|(f_n-f _{ m})\|^2_{L^2(\Gamma )}ds\le (1+2C_*t||\varphi _0|| _{ L^2(\Gamma ) })\times \nonumber \\
&&\hskip 4cm \times\left(\|f _{ 0, n }-f _{ 0, m }\|_2^2+\int _0^t|R _{ m, n }(s)|ds\right). \label{S3Ek24}
\end{eqnarray}
By (\ref{S3Ek21}),
\begin{eqnarray}
\int _0^t|R _{ m, n }(s)|ds\le 3\rho  _{ n, m }\int _0^t \left(||f_n||^2 _{ L^2(\Gamma ) }+||f_m||^2 _{ L^2(\Gamma ) } \right).\label{S3Ek25}
\end{eqnarray}
Since the sequence $(f_n) _{ n\in \NN }$ is bounded in $L^2(0, T; L^2(\Gamma ))$ for all $T>0$ and $\rho  _{ n, m }$ satisfies (\ref{S3Ek22}), we deduce that   $(f_n) _{ n\in \NN }$ is a Cauchy sequence in $L^2(0, T; L^2(\Gamma ))$ for all $T>0$.

Then, there exists $f\in L^2(0, T; L^2(\Gamma ))$ for all $T>0$,  and a subsequence, that we still denote $f_n$, satisfying
\begin{eqnarray}
&&\lim _{ n\to \infty }||f_n-f|| _{  L^2(0, T; L^2(\Gamma ))}=0,\,\,\,\forall T>0,\label{L2limit}\\
&&\lim _{ n\to \infty }f_n(t, k)=f(t, k),\,\,\,a.e.\,\, t>0, k>0.\label{AElimit}
\end{eqnarray}
On the other hand, it also follows from (\ref{S3Ek22}), (\ref{S3Ek24}) and  (\ref{S3Ek25}) that $(f_n) _{ n\in \NN }$ is now a Cauchy sequence in $C([0, T); L^2(\RR_+))$. We then deduce that, for all $T>0$:
\begin{eqnarray}
&&f\in L^\infty((0, T); L^2(\RR_+))\cap C([0, T); L^2(\RR_+)), \label{regf}\\
&&\lim _{ n\to \infty }||f_n-f|| _{ L^\infty(0,T; L^2(\RR_+)) }=0.\label{limf}
\end{eqnarray}

We now take the limit in  (\ref{S3E10}) as $n\to \infty$  to obtain:
$$
||f(T)||_2^2+2C_*\int_0^T||f(t)-\mathbb{P}f(t)||^2 _{ L^2(\Gamma ) }dt  \le  \|f_{0}\|_2^2,\,\,\,\forall T>0,
$$
and then,
\begin{equation}
\label{S3Ealmost3}
||f(t)||_2^2+2C_*\int_0^\infty||f(t)-\mathbb{P}f(t)||^2 _{ L^2(\Gamma ) }dt  \le  2\|f_{0}\|_2^2
\end{equation}

Let us show now that   $\partial _t f \in L^2(0, T; L^2(\Gamma ^{-1}))$  and $f$ satisfies the equation  (\ref{E8bis}) in $L^2(0, T;  L^2(\Gamma ^{-1}))$, for all $T>0$. To this end we notice that for all $u\in L^2(0, T; L^2(\Gamma ))$ and $v\in L^2(0, T; L^2(\Gamma ))$:
$$
\left|\int _0^ T\int _0^\infty E(u)(s, k)v(s, k)dkds\right|\le (1+2 C_0)||u|| _{L^2(0, T;  L^2(\Gamma ) )}||v|| _{L^2(0, T;  L^2(\Gamma ) )}
$$
Then, the linear operator:
$$
\mathcal T: \,\,\,v\to \int _0^ T\int _0^\infty E(u)(s, k)v(s, k)dkds
$$
is linear and bounded from $L^2(0, T; L^2(\Gamma ))$ to $\RR$. It belongs then  to \hfill\break $(L^2(0, T; L^2(\Gamma )))'$. We deduce the existence of $\omega \in L^2(0, T; L^2(\Gamma ))$ such that, for all $v\in L^2(0, T; L^2(\Gamma ))$:
$$
T(v)=\int _0^ T\int _0^\infty E(u)(t, k)v(t, k)dkdt=\int _0^ T\int _0^\infty \omega (t, k)v(t, k)\Gamma (k)dkdt.
$$
Then,
 $$
 E(u)(t, k)=\omega(t, k) \Gamma (k),\,\,\hbox{for}\,\,\,a.e. \,\,t\in (0, T),\,\,\hbox{and} \,\,\,a.e.\,\, k>0.
 $$
This implies that $E(u)\in L^2(0, T; L^2(\Gamma ^{-1}))$ and we have:
\bear
\label{S3EW24}
||E(u)|| _{ L^2(0, T; L^2(\Gamma ^{-1}))}\le (1+2C_0)||u|| _{L^2(0, T;  L^2(\Gamma ) )}.
\eear
On the other hand, we know by (\ref{propsol3}) that $f-\mathbb{P}(f)\in L^2(0, \infty; L^2(\Gamma ))$. But we also have  
$\mathbb{P}(f)(t)\in L^\infty(0, \infty; L^2(\Gamma ))$ since , for all $t>0$:
\begin{eqnarray*}
||\mathbb{P}(f)(t)|| _{ L^2(\Gamma ) }=|\langle f(t), \varphi _0\rangle|||\varphi _0|| _{ L^2(\Gamma ) }\le ||f_0||_2||\varphi _0|| _{ L^2(\Gamma ) }
\end{eqnarray*}
we deduce, that $f\in L^2(0, T; L^2(\Gamma))$, then $E(f)\in L^2(0, T; L^2(\Gamma ^{-1}))$ and by \eqref{L2limit}, for a new subsequence still denoted $(f_n)$:
\begin{eqnarray}
||E(f_n)-E(f)|| _{ L^2(0, T; L^2(\Gamma ^{-1}))}\le (1+2C_0)||f_n-f|| _{L^2(0, T;  L^2(\Gamma ) )}\to 0\label{L2convE}
\end{eqnarray}
as $n\to \infty$ and
\begin{eqnarray}
\lim _{ n\to \infty } E(f_n)(t, k)=E(f)(t, k),\,\, a.e.\,\, t\in (0,T), k>0. \label{AEconvE}
\end{eqnarray}
We then deduce, passing to the limit in (\ref{nequation}),  that  $\partial _t f \in L^2(0, T; L^2(\Gamma ^{-1}))$ and  $f$ satisfies the equation  (\ref{E8bis}) in $L^2(0, T;  L^2(\Gamma ^{-1}))$, for all $T>0$. Moreover, by (\ref{S3EW24}):
\bear
\label{propsol3253We}
\left|\left|  \frac {\partial f} {\partial t} \right|\right|_{ L^2((0, T), L^2(\Gamma ^{-1})) }\le 
 (1+2C_0)||f||_{ L^2((0, T), L^2(\Gamma ))  },\,\,\,\forall T>0.
\eear
We leave the proof of  (\ref{propsol3250We}) until the end of the proof of Proposition \ref{TheoremExistence}. 

In order to prove (\ref{conservation1}) we first notice that, using  $\partial _t f\in L^2(0, T;  L^2(\Gamma ^{-1}))$ and Lemma \ref{Lemma2}, we can multiply the equation (\ref{E8bis}) by any function $\varphi \in L^2(\Gamma )$ to obtain:
$$
\frac {d} {dt}\langle  f, \varphi \rangle =\langle E(f), \varphi \rangle.
$$
By Lemma \ref{Lemma20}, identity (\ref{conservation1}), and then (\ref{conservation2}) follows.

From  (\ref{conservation2}) we now deduce that, 
$$
\mathbb{P}(f)(t)=\langle f(t), \varphi _0\rangle \varphi _0=\langle f_0, \varphi _0\rangle \varphi _0=\mathbb{P}(f_0)\,\,\,\forall t>0,
$$
and by (\ref{S3Ealmost3}),  (\ref{propsol3}) immediately follows. We then easily deduce  (\ref{propsol0}), (\ref{propsol1}).

We prove now (\ref{propsol30}).  Since  $f_n$ satisfies (\ref{truncsolution2}), (\ref{nequation}) and  (\ref{ndata}), we obtain after integration on $(0, t)$:
\begin{equation}
\label{nmildequation}
f_n(t, k)-f _{ 0, n }(k)=\int _0^t E(f_n)(s, k)ds,\,\,\,\forall n>0,\,\forall t>0, \,\forall k>0.
\end{equation}
Using now (\ref{L2convE}) we notice that, for all $t>0$:
\begin{equation*}
\left\| \int _0^t \left( E(f_n)(s)-E(f)(s)\right)ds\right\| _{ L^2(\Gamma ^{-1}) }\!\!\!\!\!\!\le
C_0 \sqrt t\, \|f_n-f\| _{L^2(0, t;  L^2(\Gamma)) }.
\end{equation*}
We deduce that
$$
\lim _{ n\to 0 }\left\| \int _0^t  E(f_n)(s)ds- \int _0^t E(f)(s)ds\right\| _{ L^2(\Gamma ^{-1}) }=0
$$
and then, up to a new subsequence still denoted $(f_n)$:
\begin{equation}
\label{AEconvintE}
\lim _{ n\to 0 }\int _0^t  E(f_n)(s)ds= \int _0^t E(f)(s)ds=0,\,\,\, a.e. \,\,k>0
\end{equation}
Using now (\ref{AElimit}), (\ref{AEconvE}) and (\ref{AEconvintE}) we first pass to the limit in (\ref{nmildequation})  as $n\to \infty$ for almost every $t\in (0,T)$ and $k>0$ and deduce that:
$$
f(t, k)=f_0(k)+\int _0^tE(f)(s, k)ds,\,\,\,a.e. \,\,t\in (0,T), k>0.
$$
Therefore,
\begin{eqnarray*}
\lim _{ t\to 0 }||f(t)-f_0|| _{  L^2(0, t;  L^2(\Gamma ^{-1}))}&\le& C_0\int _0^t||f(s)|| _{ L^2(\Gamma ) }\\
&\le &C_0\sqrt t\,||f|| _{ L^2(0, t;  L^2(\Gamma )) }.
\end{eqnarray*}
Since, on the other hand, $f\in C([0, T); L^2(\RR_+))$,  (\ref{propsol30}) follows.

If we assume that $f_0\ge 0$, we have seen that, for every $n$,  $f_n(t)\ge 0$ for all $t>0$. We deduce by (\ref{AElimit}) that $f(t, k )\ge 0$ for all $t>0$ and a. e. $k>0$.

Finally, in order to prove  the estimate  we argue as follows. Consider the function $g(t, k)=f(t, k)-\mathbb{P}(f_0)$. By (\ref{conservation2}), $g$ satisfies all the properties that have been already proved for the function $f$. Moreover, by construction $\mathbb{P}(g)(t)=0$ for all $T\ge 0$. Therefore, using (\ref{propsol3253We}):
\bean
\left|\left|  \frac {\partial g} {\partial t} \right|\right|^2_{ L^2((0, T), L^2(\Gamma ^{-1})) }\le 
 (1+2C_0)^2||g||^2_{ L^2((0, T), L^2(\Gamma ))  }
 \eean
and then, 
\begin{equation}
\label{propsol3250WeT}
\left|\left|  \frac {\partial f} {\partial t} \right|\right|^2_{ L^2((0, T), L^2(\Gamma ^{-1})) }\le  (1+2C_0)^2||f(t)-\mathbb{P}(f_0)||^2_{ L^2((0, T), L^2(\Gamma ))  }, \forall T>0
\end{equation}
from where (\ref{propsol3250We}) follows.
\end{proof}

\section{Rate of decay}\label{Rate}
In this Section we prove the algebraic  rate of convergence  of the solutions obtained in Section \ref{SectionExistence} towards the corresponding equilibrium. 
To this end we first need the following Lemma.
\begin{lemma}
\label{conditiondecay}
Let $f_0\in L^2(\RR_+)$ such that $\int _0^\infty f_0(k)\varphi _0(k)dk=0$ and satisfies  (\ref{BoltzmannPhononTheoremExponentialDecayCondition}) or
(\ref{BoltzmannPhononTheoremExponentialDecayCondition2}).
Suppose  that there exist $C^*>0$, $\omega>0$ and $\tau >0$ such that, the   solution $f$ of (\ref{E8bis})--(\ref{E12b})  obtained in Proposition  \ref{TheoremExistence} satisfies:
\begin{equation}
\|f(t)\|_2\leq C^*\|f_0\|_2(t+1)^{-\omega}\,\,\,\,\forall t\ge \tau .\label{S4Cond}
\end{equation}
Then, there exist $\theta_1>0$,  $\kappa_1>0$ and $\kappa_2>0$, where $\kappa _1$ and $\kappa _2$ are independent on $\theta_1$, such that, for all $0<\theta<\theta_1$ and for all $t>\max\{1, \tau\}$
\begin{equation}\label{Assumptiona}
\int_{0}^\infty |f (t,k)|^2\Gamma  (k)dk \geq \kappa_1\theta\int_{0}^\infty |f(t,k)|^2dk-\kappa_2\left(\frac{\theta^2}{(t+1)^{2\omega}}+\frac{\theta}{(t+1)}\right).
\end{equation}
\end{lemma}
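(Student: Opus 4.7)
The inequality (\ref{Assumptiona}) is a defect-corrected coercivity estimate that upgrades the $L^2(\Gamma)$ coercivity of Lemma \ref{LemmaCoercivity} to one involving the plain $L^2$ norm, at the price of a correction that vanishes as $t\to\infty$ thanks to the decay hypothesis (\ref{S4Cond}) and the near-zero regularity provided by (\ref{BoltzmannPhononTheoremExponentialDecayCondition}) or (\ref{BoltzmannPhononTheoremExponentialDecayCondition2}). I would begin with a splitting at scale $\theta$: from the asymptotics (\ref{E15}) together with continuity and strict positivity of $\Gamma$, there exist $\theta_1, c_0>0$ with $\Gamma(k)\ge c_0\theta$ whenever $k\ge\theta\in(0,\theta_1)$. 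Consequently
$$\int_0^\infty |f|^2 \Gamma dk \;\ge\; c_0\theta\,\|f(t)\|_2^2 - c_0\theta\int_0^\theta |f(t,k)|^2 dk,$$
which reduces (\ref{Assumptiona}) to the localized bound
$$\int_0^\theta |f(t,k)|^2 dk \;\le\; A\,\frac{\theta}{(t+1)^{2\omega}} + \frac{B}{t+1} \qquad (\star)$$
for constants $A,B>0$ independent of $\theta$ and $t$.

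To establish $(\star)$, I would combine the assumed $L^2$ decay with a near-zero propagation estimate whose precise form depends on the regularity condition. Under (\ref{BoltzmannPhononTheoremExponentialDecayCondition2}), the ODE for $f(t,0)$ reads $\partial_t f(t,0)=-\Gamma(0)f(t,0)+T_2[f](t,0)=0$, since both $\Gamma(0)$ and $K(0,k')$ vanish; hence $f(t,0)=a$ for all $t>0$. An integral form of (\ref{E8bis}) combined with $\partial_t f\in L^2(L^2(\Gamma^{-1}))$ from (\ref{propsol3250We}) and $\Gamma(k)^{-1}=O(1/k)$ near zero then yields a pointwise bound $|f(t,k)-a|^2\lesssim k\,\Psi(t)$, which integrates to the $\theta$-linear piece after invoking (\ref{S4Cond}). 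Under (\ref{BoltzmannPhononTheoremExponentialDecayCondition}), I would instead propagate $G(t):=\int_0^1 |f(t,k)|^2/k\,dk$: the identity $G'(t)=-2\int_0^1 \Gamma|f|^2/k\,dk + 2\int_0^1 f\,T_2[f]/k\,dk$ has dissipative leading term, while the bilinear part is controlled by the behaviour $K(k,k')=O(k^2)$ as $k\to 0$ from the Appendix, yielding a uniform bound $G(t)\le C$ and hence $\int_0^\theta |f|^2 dk\le\theta\,G(t)\le C\theta$.

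The $\theta$-independent term $B/(t+1)$ would come from time-averaging the global dissipation identity (\ref{propsol3000}): since $\|f(\cdot)\|_{L^2(\Gamma)}^2\in L^1(0,\infty)$, a Chebyshev-type argument yields $\|f(s)\|_{L^2(\Gamma)}^2\lesssim 1/s$ for a set of $s$ of positive density near $t$, which combined with the monotonicity of $\|f(t)\|_2$ and the splitting of Stage~1 delivers a bound on $\int_0^\theta|f(t,k)|^2 dk$ of order $1/(t+1)$ independent of $\theta$. Combining this with the $\theta$-linear contribution from the previous step proves $(\star)$ and hence (\ref{Assumptiona}).

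\textbf{Main obstacle.} The chief difficulty is the near-zero propagation estimate needed for the $\theta$-linear part of $(\star)$: whether pointwise (for the case $f(t,0)=a$) or in the weighted norm $L^2(dk/k)$, it requires careful control of the bilinear operator $T_2[f]$ in a weighted space, relying on the vanishing $K(k,k')=O(k^2)$ at $k=0$ and on integrability estimates for $K(k,k')/k$ in $k'$ given in the Appendix. Once this propagation is in hand, assembling the two pieces into the precise form $\theta^2/(t+1)^{2\omega}+\theta/(t+1)$ is routine algebra.
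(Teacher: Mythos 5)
Your opening move is the same as the paper's: since $\Gamma(k)\ge \kappa k$ globally, one has $\int_0^\infty |f|^2\Gamma\,dk\ge \kappa\theta\|f(t)\|_2^2-\kappa\theta\int_0^\theta|f|^2dk$, so everything reduces to your bound $(\star)$ on $\int_0^\theta|f(t,k)|^2dk$. The gap is in how you propose to prove $(\star)$: both of your routes deliver bounds on $\int_0^\theta|f|^2dk$ that do \emph{not} decay in $t$, and the time decay is exactly what the lemma needs. Under (\ref{BoltzmannPhononTheoremExponentialDecayCondition2}), your claimed pointwise bound $|f(t,k)-a|^2\lesssim k\,\Psi(t)$ cannot hold with $\Psi(t)\to 0$: for fixed $k>0$ the solution tends to $0$ (it decays in $L^2$ and, as the paper shows, like $e^{-\Gamma(k)t}$ pointwise), while $a$ is fixed, so $|f(t,k)-a|\to|a|$; and if $\Psi$ does not decay you only get $\int_0^\theta|f|^2\lesssim a^2\theta+\theta^2\Psi$, whose leading term $a^2\theta$ is not of the form $A\theta(t+1)^{-2\omega}+B(t+1)^{-1}$. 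Under (\ref{BoltzmannPhononTheoremExponentialDecayCondition}), a uniform bound $G(t)\le C$ only yields $\int_0^\theta|f|^2\le C\theta$, again with no $t$-decay; if you trace this through the bootstrap of Lemma \ref{TheoremExponentialDecay} (Step 2, where $\omega=\omega_0>0$), the $\theta^2$ term without the factor $(1+t)^{-2\omega_0}$ produces only $(1+t)^{-\delta}$ with $\delta<1$, not the $(1+t)^{-1}$ needed for (\ref{ExDecaya1}). Your Chebyshev argument for the $B/(t+1)$ piece also fails, because $\|f(s)\|_{L^2(\Gamma)}^2\lesssim 1/s$ controls $\int_0^\theta|f|^2\Gamma\,dk\approx\int_0^\theta|f|^2k\,dk$ but not $\int_0^\theta|f|^2dk$, precisely in the region $k\to0$ where $\Gamma(k)\sim \pi^4k/15$ vanishes.

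The missing idea is the pointwise-in-$k$ damping. The paper treats (\ref{E8bis}), for each fixed small $k$, as a scalar ODE in $t$ with damping rate $\Gamma(k)\ge k/2$ and forcing $|T_2[f](t,k)|\le\|K(k,\cdot)\|_2\|f(t)\|_2\le C_K k\,(t+1)^{-\omega}\|f_0\|_2$ (using (\ref{kernelK1}) and the hypothesis (\ref{S4Cond})). Duhamel plus Lemma \ref{Hopital} then give
$|f(t,k)|\le|f_0(k)|e^{-kt/2}+C\|f_0\|_2\bigl[(t+1)^{-\omega}+e^{-kt/6}\bigr]$ on $(0,\theta_0)$. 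Squaring and integrating, the factor $e^{-kt}$ converts the initial-data contribution into $O(1/t)$ via either $\int_0^\theta\frac{|f_0|^2}{k}\,ke^{-kt}dk\le I/t$ or the scaling limit defining $a$, while $\int_0^\theta e^{-kt/3}dk\le 3/t$ and the forcing contributes $\theta(t+1)^{-2\omega}$. That is where both the $\theta/(t+1)^{2\omega}$ and the $1/(t+1)$ terms of your $(\star)$ come from; without exploiting $e^{-\Gamma(k)t}$ on the region $k\gtrsim 1/t$, neither of the hypotheses (\ref{BoltzmannPhononTheoremExponentialDecayCondition}), (\ref{BoltzmannPhononTheoremExponentialDecayCondition2}) can be cashed in for time decay.
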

\begin{proof}
By hypothesis:
$$
\frac {\partial f} {\partial t}=-\Gamma (k)f(t, k)+\int _0^\infty K(k, k')f(t, k')dk'.
$$
Multiply both sides of the above equation by $2f$, we get
$$
\frac {\partial f^2} {\partial t}=-2\Gamma (k)f^2(t, k)+2\int _0^\infty K(k, k')f(t, k')dk'f(t,k).
$$

Using (\ref{propG1}) and (\ref{kernelK1})  in the Appendix we deduce, that there exist two positive constants $\theta_0<1$ and $C_K$ such that, for all $k\in (0, \theta_0)$:
\begin{eqnarray*}
&&(i)\quad \Gamma (k)\ge \frac {k} {2},\\
&&(ii)\quad\int _0^\infty K(k, k')f(t, k')dk'\le ||f(t)|| _{ 2 }||K(k, \cdot)||_2 \le \frac {C_K} {2}\,\,\, k ||f(t)|| _{ 2 }.
\end{eqnarray*}
Therefore, for $\theta \in (0, \theta_0)$ and all $t>0$:
$$
\frac {\partial f^2} {\partial t}(t, k)\le - {k} f^2(t, k)+C_K k ||f(t)|| _{ 2 }|f(t, k)|\,\,\,\, a. e. k\in (0, \theta).
$$
Using now (\ref{S4Cond}) we deduce, for $\theta \in (0, \theta_0)$ and all $t>\tau $:
\begin{eqnarray*}
&&\frac {\partial f^2} {\partial t}(t, k)+ {k} f^2(t, k)\le C_K k ||f(t)|| _{ 2 }|f(t, k)|\leq C_KC^*k(t+1)^{-\omega}|f(t, k)|||f_0|| _{ 2 }\\
&&\frac {\partial } {\partial t}\left(f^2(t, k)e^{ {k} t} \right)\le C_KC^*k(t+1)^{-\omega}e^{{k}t}|f(t, k)|||f_0|| _{ 2 }.
\end{eqnarray*}
Since
\begin{eqnarray*}
\frac {\partial } {\partial t}\left(f^2(t, k)e^{ {k} t} \right)&=&\frac {\partial } {\partial t}\left(\left(f(t, k)e^{\frac {k} {2}t} \right)^2\right)\\
&=&2\left|f(t, k)e^{\frac {k} {2}t} \right|\frac {\partial } {\partial t}\left|f(t, k)e^{ \frac {k} {2}t} \right|\mbox{ for a. e. } k,
\end{eqnarray*}
then
\begin{eqnarray*}
&&\frac {\partial } {\partial t}\left(|f(t, k)|e^{\frac {k} {2}t} \right)\le \frac{C_KC^*}{2}k||f_0|| _{ 2 }(t+1)^{-\omega}e^{\frac {k} {2}t}\\
&&|f(t, k)|e^{\frac {k} {2}t}\le |f_0(k)|+ \frac{C_KC^*}{2}||f_0|| _{ 2 }k\int_0^t(s+1)^{-\omega}e^{\frac {k} {2}s}ds.
\end{eqnarray*}
By lemma \ref{Hopital} with $\rho=k/2$ and $\theta= \omega$:
\begin{eqnarray*}
\int_0^{t}({s}+1)^{-\omega}e^{\frac {k{s}} {2}}d{s}&\leq &C _{ \omega  }[(t+1)^{-\omega }+e^{-\frac {kt} {6}}]\frac{e^{\frac {k} {2}  t}}{k }
\end{eqnarray*}
for all $\omega >0$,  and $t>0$, where we can take $C _{ \omega  }=6\times 2^{\omega }$.
Then, for all $t>\tau$ and $\theta\in (0, \theta_0)$:
\begin{eqnarray*}
&& |f(t, k)|e^{\frac {k} {2}t}\le |f_0(k)|+ \frac{C_KC^*C_{\omega }}{2}||f_0|| _{ 2 }\left[(t+1)^{-\omega}+e^{-\frac {kt} {6}}\right]e^{\frac {k} {2}t}\\
&&  |f(t, k)|\le |f_0(k)|e^{-\frac {kt} {2}}+ \frac{C_KC^*C_{\omega }}{2}||f_0|| _{ 2 }\left[(t+1)^{-\omega}+e^{-\frac {kt} {6}}\right]\\
&& |f(t, k)|^2\le 2|f_0(k)|^2e^{-{k}t}+A||f_0|| _{ 2 }^2\left[(t+1)^{-2\omega}+e^{-\frac {kt} {3}}\right]\\
&&A=(C_KC^*C_{\omega })^2.
\end{eqnarray*}
As a consequence, if $0<\theta \le \theta_0$:
\begin{eqnarray}
\label{S4E12a}
&&\int_{0}^\theta|f(t, k)|^2dk  \leq 2\int_{0}^\theta f_0^2(k)e^{-kt}+A||f_0|| _{ 2 }^2\left(\frac{\theta}{(1+t)^{2\omega }}+
\frac {3} {t}\right).
\end{eqnarray}
If we now assume that $f_0$ satisfies (\ref{BoltzmannPhononTheoremExponentialDecayCondition}):
$$I=\int_{0}^1\frac{|f_0(k)|^2}{k}dk<\infty,$$
then we obtain, for all $t\ge \max\{1, \tau \}$:
\begin{equation}
\label{S4E1a}
\int_{0}^\theta|f(t, k)|^2dk \leq \frac{2I}{(t+1)}+A||f_0|| _{ 2 }^2\left[\frac{\theta}{(t+1)^{2\omega}}
+\frac {3} {1+t}\right].
\end{equation}
On the other hand, by (\ref{propG1}) and (\ref{propG2}) it easily follows that there exists a positive constant $\kappa >0$ such that for all $k>0$ we have $\Gamma (k)\ge \kappa\, k$.  We then have:
\begin{eqnarray*}
&&\int_{0}^\infty |f(t, k)|^2\Gamma(k)dk =\int_{0}^\theta |f(t, k)|^2\Gamma (k)dk + \int_{\theta}^\infty |f(t, k)|^2\Gamma(k)dk\\
&\geq &\kappa\,\theta\int_{\theta}^\infty |f(t, k)|^2dk\\
&=&-\kappa\,\theta\int_{0}^\theta |f(t, k)|^2dk+\kappa\,\theta\int_{0}^\infty |f(t, k)|^2dk\\
&\geq& -\kappa \theta\left(\frac{2I}{(t+1)}+(C_KC^*C_{\omega }||f_0|| _{ 2 })^2\left[\frac{\theta}{(t+1)^{2\omega}}
+\frac {3} {1+t}\right]\right)+\\
& &+ \kappa \theta\int_{0}^\infty |f(t, k)|^2dk.
\end{eqnarray*}
Then, condition (\ref{Assumptiona}) is satisfied with
\begin{eqnarray}
&&\kappa_1=\kappa , \label{constants6}\\
&&\kappa_2=\kappa \left(2I+4A||f_0|| _{ 2 }^2\right) \label{constants7}.
\end{eqnarray}
for all $t\ge \max\{1, \tau \}$.

If, on the other hand, the initial data $f_0$ satisfies (\ref{BoltzmannPhononTheoremExponentialDecayCondition2})  then, by Lebesgue convergence Theorem: 
$$
\lim _{ t\to \infty }t\int_{0}^\theta f_0^2(k)e^{-kt}dk=\lim _{ t\to \infty }\int_{0}^{\theta t} f_0^2\left(\frac {x} {t}\right)e^{-x}dx=a^2
$$
Notice that if the limit $a$ exists, then the function $f_0$ is bounded in a  neighborhood   of the origin, from where, for all  $x\in (0, t\theta)$, $x/t \in (0, \theta)$ and $f(x/t)$ is bounded if $\theta_0$ is sufficiently small. We then deduce by (\ref{S4E12a}) that
\begin{equation}
\label{S4E1aBIS}
\int_{0}^\theta|f(t, k)|^2dk \leq \frac{2a^2}{(t+1)}+A||f_0|| _{ 2 }^2\left[\frac{\theta}{(t+1)^{2\omega}}
+\frac {3} {1+t}\right].
\end{equation}
Arguing as above we deduce that condition  (\ref{Assumptiona})   is now satisfied with
\begin{eqnarray}
&&\kappa_1=\kappa , \label{constants6bis}\\
&&\kappa_2=\kappa \left(2a^2+4A||f_0|| _{ 2 }^2\right)\label{constants7bis}.
\end{eqnarray}
\end{proof}
\begin{remark}
\label{remarkconstants1}
The constants $\theta_0$ and  $C_K$ are determined  by the behavior of $\Gamma (k)$ and $||K(k, \cdot)||_2$ respectively as $k\to 0$.  The value of  $\kappa$ is determined by the global behavior of the function $\Gamma $.  The constants $\kappa_1$ and $\kappa_2$ given by 
 (\ref{constants6}) and  (\ref{constants7}) or  (\ref{constants6bis}) and  (\ref{constants7bis}) depend on the  global behavior of the function $\Gamma $, but also on the quantities $\int_{0}^1\frac{|f_0(k)|^2}{k}dk$ or $a$ respectively. 
\end{remark}

The algebraic convergence rate of the solution of problem  (\ref{E8bis})--(\ref{E12b}) follows as a consequence of  Lemma \ref{conditiondecay}, using the following result.

\begin{lemma}
\label{TheoremExponentialDecay}
 Suppose that $f_0\in L^2(\RR_+)$ is  such that 
$\mathbb{P}(f_0)=0$ and satisfies (\ref{BoltzmannPhononTheoremExponentialDecayCondition}) or
(\ref{BoltzmannPhononTheoremExponentialDecayCondition2}). Then, there exists a positive constant $C$, that does not depend on $\|f_0\|_2$ such that for all $t>0$: 
\begin{eqnarray}
\label{ExDecaya1}
&&\|f(t)\|_{2}\leq  C\|f_0\|_2(1+t)^{-1/2}.
\end{eqnarray}
\end{lemma}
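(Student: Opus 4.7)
\medskip

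\noindent\textbf{Plan of proof.} My plan is to combine three ingredients: (i) the conservation law (\ref{conservation2}) which, together with $\mathbb{P}(f_0)=0$, keeps $f(t)$ in $N(E)^\perp$ for all $t$; (ii) the spectral gap of Lemma \ref{LemmaCoercivity}, which furnishes a coercive dissipation in $L^2(\Gamma)$; and (iii) Lemma \ref{conditiondecay}, which converts $L^2(\Gamma)$-control back into $L^2(\RR_+)$-control with an explicit error. The decay rate will then be extracted from a Gronwall-type inequality by making the parameter $\theta$ of Lemma \ref{conditiondecay} depend on time.

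\smallskip

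First, since $\varphi_0$ is proportional to $\phi(k)=k^2/\sinh k$, the conservation identity (\ref{conservation2}) yields $\langle f(t),\varphi_0\rangle = \langle f_0,\varphi_0\rangle = 0$ for all $t\geq 0$, so $\mathbb{P} f(t)=0$. Multiplying (\ref{E8bis}) by $f(t)$, integrating over $\RR_+$ and using (\ref{SpectralGap1}), I obtain
\begin{equation*}
\frac{d}{dt}\|f(t)\|_2^2 \;=\; 2\langle E(f(t)),f(t)\rangle \;\leq\; -\,2C_*\,\|f(t)\|_{L^2(\Gamma)}^2,
\end{equation*}
so in particular $\|f(t)\|_2\leq \|f_0\|_2$ for every $t\geq 0$. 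This already verifies the hypothesis (\ref{S4Cond}) of Lemma \ref{conditiondecay} with the trivial choice $\omega=0$, $C^*=1$, $\tau=0$.

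\smallskip

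Second, under either (\ref{BoltzmannPhononTheoremExponentialDecayCondition}) or (\ref{BoltzmannPhononTheoremExponentialDecayCondition2}), Lemma \ref{conditiondecay} applied with $\omega=0$ gives, for all $\theta\in(0,\theta_1)$ and $t>1$,
\begin{equation*}
\|f(t)\|_{L^2(\Gamma)}^2 \;\geq\; \kappa_1\theta\,\|f(t)\|_2^2 - \kappa_2\Bigl(\theta^2+\frac{\theta}{t+1}\Bigr).
\end{equation*}
Writing $y(t):=\|f(t)\|_2^2$ and inserting the above into the energy inequality,
\begin{equation*}
y'(t)+2C_*\kappa_1\,\theta\, y(t)\;\leq\; 2C_*\kappa_2\Bigl(\theta^2+\frac{\theta}{t+1}\Bigr).
\end{equation*}

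\smallskip

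Third, I choose $\theta$ as a function of $t$, specifically $\theta(t)=\alpha/(t+1)$, with $\alpha$ a constant satisfying $\alpha>1/(2C_*\kappa_1)$ so that $\beta:=2C_*\kappa_1\alpha>1$. This is admissible whenever $t+1>\alpha/\theta_1$, i.e.\ for $t\geq T$ with $T$ sufficiently large. The differential inequality becomes
\begin{equation*}
y'(t)+\frac{\beta}{t+1}\,y(t) \;\leq\; \frac{C_1}{(t+1)^2}, \qquad C_1:=2C_*\kappa_2(\alpha^2+\alpha).
\end{equation*}
Multiplying by the integrating factor $(t+1)^\beta$ and integrating from $T$ to $t$,
\begin{equation*}
(t+1)^\beta y(t)\;\leq\; (T+1)^\beta y(T)+\frac{C_1}{\beta-1}\Bigl((t+1)^{\beta-1}-(T+1)^{\beta-1}\Bigr),
\end{equation*}
so that for $t\geq T$,
\begin{equation*}
y(t)\;\leq\; \frac{(T+1)^\beta\|f_0\|_2^2}{(t+1)^\beta}+\frac{C_1}{\beta-1}\cdot\frac{1}{t+1}\;\leq\;\frac{C_2}{t+1}.
\end{equation*}
On the complementary interval $0\le t\le T$, the trivial estimate $y(t)\leq\|f_0\|_2^2$ gives $y(t)\leq (T+1)\|f_0\|_2^2/(t+1)$. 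Combining these two regimes and taking square roots yields (\ref{ExDecaya1}).

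\smallskip

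\noindent\textbf{Main obstacle.} The genuine technical step is the choice $\theta(t)=\alpha/(t+1)$, which is only available because $\kappa_1$ in Lemma \ref{conditiondecay} is independent of $\theta$; one must take $\alpha$ large enough to make $\beta>1$, yet also verify that $\alpha/(T+1)<\theta_1$, which is the source of the constant $T$. The remaining delicate point is bookkeeping: the constant $C_2$ absorbs $\kappa_2$ and therefore depends on the structural quantity $I$ (under (\ref{BoltzmannPhononTheoremExponentialDecayCondition})) or $a$ (under (\ref{BoltzmannPhononTheoremExponentialDecayCondition2})) as well as on $\|f_0\|_2$ through the explicit expressions (\ref{constants7}) or (\ref{constants7bis}), and one has to verify that it combines into a bound of the form $C\|f_0\|_2^2$ in the way required by the statement.
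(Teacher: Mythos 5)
Your proof is correct, but it reaches (\ref{ExDecaya1}) by a genuinely different and in fact shorter route than the paper. The paper also starts from the energy inequality plus Lemma \ref{conditiondecay} with $\omega=0$, but it then keeps $\theta$ \emph{constant} during the time integration, uses the exponential integrating factor $e^{C_1\theta t}$ together with Lemma \ref{Hopital}, and only afterwards optimizes by setting $\theta=(t+1)^{-\delta}C_1^{-1}$ with a fixed $\delta\in(2/3,1)$; because the term $\theta^2 t\sim t^{1-2\delta}$ then caps the rate at $\omega_0=\frac{2\delta-1}{2}\in(1/6,1/2)$, a second pass is needed: Lemma \ref{conditiondecay} is re-applied with $\omega=\omega_0$, and the condition $2\omega_0+\delta>1$ finally yields the $(1+t)^{-1/2}$ rate. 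You instead substitute the time-dependent choice $\theta(t)=\alpha/(t+1)$ directly into the pointwise differential inequality, which turns the integrating factor into the polynomial $(t+1)^{\beta}$ with $\beta=2C_*\kappa_1\alpha>1$ and makes the source term uniformly $O((t+1)^{-2})$, so a single Duhamel integration gives the sharp rate. This is legitimate precisely because the coercivity estimate (\ref{Assumptiona}) holds for each fixed $t$ and each admissible $\theta$ separately, and because $\kappa_1$ is independent of $\theta$ — the two points you correctly single out; one should also note that $t\mapsto\|f(t)\|_2^2$ is absolutely continuous (from $\partial_t f\in L^2(0,\infty;L^2(\Gamma^{-1}))$ and $f\in L^2_{loc}(0,\infty;L^2(\Gamma))$), so the a.e. differential inequality integrates as claimed. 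What your version buys is the elimination of the bootstrap and of Lemma \ref{Hopital}; what it shares with the paper is the constant bookkeeping: $\kappa_2$ carries $I$ (or $a$) and $\|f_0\|_2^2$, so, exactly as in the paper's proof, you should normalize $\|f_0\|_2=1$ by linearity at the outset so that the final constant $C$ depends on $I$ or $a$ of the normalized datum but not separately on $\|f_0\|_2$.
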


\begin{proof}
Since equation (\ref{E8bis}) is linear, we may suppose without any loss of generality that $||f_0||_2=1$. We divide the proof into two steps.

\textit{Step 1}. We first apply Lemma \ref{conditiondecay} with $\omega=0$. 
To this end we multiply the equation (\ref{E8bis}) by $f$ and integrate over $\RR_+$ and obtain, using Lemma \ref{LemmaCoercivity}:
 $$\frac{d }{dt}\|f\|_{2}^2=\langle E(f), f\rangle\leq -C_*\int_0^\infty| \sqrt \Gamma (k)f(k)|^2dk.$$
 Since the solution that we have obtained is such that $||f(t)||_2\le ||f_0||_2$ for all $t>0$, condition (\ref {S4Cond}) holds  with $\omega =0$, $\tau =0$ and  $C^*=1$. 
Then, by  Lemma \ref{conditiondecay},  there exist three positive constants $\theta_0$, $\kappa_1$ and $\kappa_2$, with $\kappa_1$ and $\kappa _2$ independent of $\theta_0$, such that for all $\theta \in (0, \theta_0)$ and for all $t>1 $:
$$\frac{d }{dt}\|f\|_{2}^2\leq-C_*\kappa_1\theta\|f\|_{2}^2+C_*\kappa_2\left(\theta^2+\frac{\theta}{(t+1)}\right).$$
This leads to
\begin{eqnarray}
&&\frac{d}{dt} \left(\|f\|_{2}^2\exp(C_1\theta t)\right)\leq C_2\left(\theta^2+\frac{\theta}{(t+1)}\right)\exp(C_1\theta t), \label{constants1}\\
&&\hbox{with:}\,\,\, C_1=\max\{1, C_*\kappa_1\},\,\,\,C_2=C_*\kappa_2. \label{constants2}
\end{eqnarray}
Thus, for all $t>1$:
\begin{equation*}
\|f(t)\|_{2}^2\le \exp(-C_1\theta t )+ C_2\int_0 ^t\left(\theta^2+\frac{\theta}{(s+1)}\right)\exp(-C_1\theta (t-s))ds.
\end{equation*}
and, by (\ref{Hopital0}) in Lemma \ref{Hopital}:
\begin{eqnarray}
\|f(t)\|_{2}^2
\leq\exp(-C_1\theta t )+ {C_2}\theta^2 t+ C_2
\left[\frac {2} {1+t}+3e^{-\frac {C_1\theta t} {3}} \right]\label{Step1e1}
\end{eqnarray}
for all $\theta\in (0, \theta_0)$ and $t\ge 1$.

We fix now a constant $\delta $ such that
\begin{eqnarray}
\frac {2} {3}<\delta<1,\label{delta}
\end{eqnarray}  
and define
\begin{eqnarray}
T_0=\left(\frac {1} {C_1\theta_0}\right)^\delta \label{timeT0}.
\end{eqnarray}  
Then for all $t\ge T_0 $, 
we have $t^{-\delta}C_1^{-1}\le T_0^{-\delta}C_1^{-1}=\theta_0$. We may therefore choose   $\theta=(t+1)^{-\delta}C_1^{-1}$ in (\ref{Step1e1}) to obtain that, for all $t\ge \max \{1, T_0 \}$:
\vfill
\eject
\begin{eqnarray}
\|f(t)\|_{2}^2
&\leq& \exp(-C_1 t (1+t)^{-\delta } )+ \frac {C_1^{-2}C_2 t} {(1+t)^{2\delta} }+\nonumber \\
&&\hskip  4cm + C_2
\left[\frac {2} {1+t}+3e^{-\frac {t(1+t)^{- \delta }} {3}} \right]\nonumber \\
&\le&\exp(-C_1 t (1+t)^{-\delta } )+ C_1^{-2}C_2 (1+t)^{1-2\delta }+\nonumber \\
&&\hskip  4cm+ C_2
\left[\frac {2} {1+t}+3e^{-\frac {t(1+t)^{- \delta }} {3}} \right]\\
&\le& (1+3C_2)e^{-\frac {t(1+t)^{- \delta }} {3}} + C_1^{-2}C_2 (1+t)^{1-2\delta }+
 \frac {2C_2} {1+t}.
\label{Step1e2}
\end{eqnarray}
Since $\delta <1$,  there is a unique positive number $T_1$ such that
\begin{equation}
(1+3C_2)e^{-\frac {T_1(1+T_1)^{- \delta }} {3}}=C_1^{-2}C_2 (1+T_1)^{1-2\delta }.
\end{equation}
Then, if $t\ge T_2=\max\{1, T_0, T_1\}$, 
$$(1+3C_2)e^{-\frac {t(1+t)^{- \delta }} {3}}\le C_1^{-2}C_2 (1+t)^{1-2\delta }$$
and 
\begin{eqnarray*}
\|f(t)\|_{2}^2\le 2C_1^{-2}C_2 (1+t)^{1-2\delta }+ \frac {2C_2} {1+t}.
\end{eqnarray*}
Since $\delta \in (2/3, 1)$, if we call $\omega _0=\frac {2\delta -1} {2}$we have $\omega _0\in (1/6, 1/2)$ and then
\begin{eqnarray}
\|f(t)\|_{2}^2\le 2C_2 (1+C_1^{-2})(1+t)^{-2\omega _0}\,\,\,\, \forall t\ge T_2.\label{S4step1}
\end{eqnarray}
\textit{Step 2.} Using the estimate  (\ref{S4step1}) we may apply now  Lemma \ref{conditiondecay} with $\omega=\omega _0$, $\tau=T_2$ and $ 2C_2 (1+C_1^{-2})$ in the role of $C^*$. Let us call  $2C_2 (1+C_1^{-2})=C^{**}$. Arguing as above  we first write that, by Lemma \ref{conditiondecay},  there exists three positive constants $\theta'_0$, $\kappa'_1$ and  $\kappa'_2$ with $\kappa'_1$ and $\kappa' _2$ independent of $\theta'_0$, such that for all $\theta \in (0, \theta'_0)$ and for all $t>T_2$:
$$\frac{d }{dt}\|f(t)\|_{2}^2\leq-C_*\kappa'_1\theta\|f(t)\|_{2}^2+C_*\kappa'_2\left(\frac {\theta^2} {(1+t)^{2\omega _0}}+\frac{\theta}{(t+1)}\right).$$
Then, for all $t\ge T_2$:
$$\|f(t)\|_{2}^2\le \|f_0\|_{2}^2 e^{-C_1'\theta t}+ C_2'\int_0 ^t\left(\frac{\theta^2}{(s+1)^{2\omega_0}}+\frac{\theta}{(s+1)}\right)e^{-C_1'\theta (t-s)}ds.$$
where
$$C_1'=C_*C^{**}\kappa'_1,\,\,\,C_2'=C_*\kappa'_2.$$
Using (\ref{Hopital0}):
\begin{eqnarray*}
&&\int_0 ^t\left(\frac{\theta^2}{(s+1)^{2\omega_0}}+\frac{\theta}{(s+1)}\right)e^{C_1'\theta s}ds\le \\
&&\hskip 2cm \le \theta\left(4^{\omega _0}(t+1)^{-2\omega _0}+3e^{-C'_1\theta t/3}\right)\frac{e^{C'_1\theta  t}}{C'_1 }+\\
&&\hskip 4cm +\left(2(t+1)^{-1}+3e^{-C'_1\theta t/3}\right)\frac{e^{C'_1\theta  t}}{C'_1}
\end{eqnarray*}
from where we deduce that for all $\theta\in (0, \theta_0')$ and $t\ge T_2$:
\begin{eqnarray}
&&||f(t)||_2^2\le  e^{-C_1'\theta t}+\\
&&+\frac{C_2'}{C_1'}\left( \frac{4^{\omega _0}\theta}{(t+1)^{2\omega _0}}+3\theta e^{-C'_1\theta t/3}+\frac{2}{(t+1)}+3e^{-C'_1\theta t/3} \right)\nonumber\\
&\le &\left(1+6\frac{C_2'}{C_1'} \right)e^{-C'_1\theta t/3}+
\frac{C_2'}{C_1'} \frac{2^{2\omega _0}\theta}{(t+1)^{2\omega _0}}+\frac{C_2'}{C_1'}\frac{2}{(t+1)}. \label{S4step4}
\end{eqnarray}
(where we have used that $||f_0||_2\le 1$). We define now 
\begin{eqnarray}
T_3=\left(\frac {1} {C'_1\theta'_0}\right)^\delta. \label{timeT3}
\end{eqnarray}  
 Then, if $t>\max\{T_2, T_3\}$,  $t^{-\delta}{C'_1}^{-1}\le T_3^{-\delta}{C'_1}^{-1}=\theta'_0$. We may therefore choose   $\theta=(t+1)^{-\delta}{C'_1}^{-1}$ in (\ref{S4step4}) and obtain
$$
||f(t)||_2^2\le \left(1+6\frac{C_2'}{C_1'} \right)e^{-\frac {t(1+t)^{-\delta }} {3}}+
\frac{C_2'}{{C_1'}^2} \frac{4^{\omega _0}}{(t+1)^{2\omega _0+\delta }}+\frac{C_2'}{C_1'}\frac{2}{(t+1)}
$$
for all $t\ge \max\{T_2, T_3\}$. We now call $T_4$ the positive number such that
$$
\left(1+6\frac{C_2'}{C_1'} \right)e^{-\frac {T_4(1+T_4)^{-\delta }} {3}}=
\frac{C_2'}{{C_1'}^2} \frac{4^{\omega _0}}{(T_4+1)^{2\omega _0+\delta }}
$$
then, for all $t\ge \max\{T_2, T_3, T_4\}$,
$$
||f(t)||_2^2\le 2
\frac{C_2'}{{C_1'}^2} \frac{4^{\omega _0}}{(t+1)^{2\omega _0+\delta }}+\frac{C_2'}{C_1'}\frac{2}{(t+1)}.
$$
Since $\delta >2/3$ and $2\omega _0>1/3$, $2\omega _0+\delta >1$ and for all $t\ge \max\{T_2, T_3, T_4\}$:
$$
||f(t)||_2^2\le 
2\left(\frac{C_2'4^{\omega _0}}{{C_1'}^2}+\frac{C_2'}{C_1'}\right)\frac{1}{(t+1)}.
$$
Since on the other hand, $||f(t)||_2^2\le ||f_0||_2^2=1$ for all $t\ge 0$ we deduce (\ref{ExDecaya1}) for some positive constant $C$  and for all $t>0$. If the initial data is such that $||f_0||_2\ge 1$, we apply the previous argument to the function $f(t)/||f_0||_2$ and (\ref{ExDecaya1}) by the linearity of the equation (\ref{E8bis})--(\ref{E12}).
\end{proof}

We may state now the following Corollary that follows  from Lemma \ref{TheoremExponentialDecay} and Lemma \ref{conditiondecay}.
\begin{corollary}
\label{corollary}
For any solution $f$ of  (\ref{E8bis})--(\ref{E12b}) given by Proposition  \ref{TheoremExistence} such that the initial data $f_0$ satisfies (\ref{BoltzmannPhononTheoremExponentialDecayCondition}) or (\ref{BoltzmannPhononTheoremExponentialDecayCondition2}), there exists a positive constant $C$, depending the behavior of $\Gamma (k)$ on $[0, \infty)$, of $||K(k, \cdot)||_2$  as $k\to 0$  and  on $\int_{0}^1\frac{|f_0(k)|^2}{k}dk$ or $a$ respectively,  such that, for all $t>0$:
\begin{equation}
\label{S4Ecorollary}
||f(t)-c_0\varphi _0||_2\le C\frac { ||f_0-\mathbb{P}(f_0)||_2} {(1+t)^{1/2}}.
\end{equation}
\end{corollary}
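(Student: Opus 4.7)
The plan is to deduce the corollary by applying Lemma \ref{TheoremExponentialDecay} not to $f$ directly but to the shifted function $g(t,k)=f(t,k)-c_0\varphi_0(k)$, where $c_0 = \mathbb{P}(f_0)/\varphi_0 = \langle f_0,\varphi_0\rangle$. First I would verify that $g$ is itself a solution of (\ref{E8bis}) in the sense of Proposition \ref{TheoremExistence}, with initial datum $g_0 = f_0 - \mathbb{P}(f_0)$. This is immediate: by Corollary \ref{coro} one has $E(\varphi_0)=0$, and since $c_0\varphi_0$ is independent of $t$, one has $\partial_t g = \partial_t f = E(f) = E(g) + c_0 E(\varphi_0) = E(g)$. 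The function spaces and regularity requirements pass from $f$ to $g$ trivially because $\varphi_0 \in L^2(\mathbb{R}_+)$ and $\varphi_0 \in L^2(\Gamma^{-1})$ by Remark following Lemma \ref{LemmaCoercivity}.

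Next I would check that $\mathbb{P}(g_0)=0$. Since $\varphi_0$ is $L^2$-normalized, $\langle g_0,\varphi_0\rangle = \langle f_0,\varphi_0\rangle - c_0\|\varphi_0\|_2^2 = c_0 - c_0 = 0$. By the energy conservation identity (\ref{consenergy}) in Proposition \ref{TheoremExistence}, this orthogonality in fact persists in time: $\mathbb{P}(g(t))=0$ for all $t>0$.

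Then I would argue that whichever of the two hypotheses (\ref{BoltzmannPhononTheoremExponentialDecayCondition}) or (\ref{BoltzmannPhononTheoremExponentialDecayCondition2}) is satisfied by $f_0$ is automatically inherited by $g_0$. The point is the behavior of $\varphi_0$ near the origin: since $\phi(k)=k^2/\sinh k \sim k$ as $k\to 0$, one has $\varphi_0(k)\sim \sqrt{30}\,k/\pi^2$, so that $\int_0^1 |\varphi_0(k)|^2/k\,dk<\infty$ and $\lim_{k\to 0}\varphi_0(k)=0$. Therefore, using $|g_0|^2\le 2|f_0|^2+2c_0^2|\varphi_0|^2$, if $f_0$ satisfies (\ref{BoltzmannPhononTheoremExponentialDecayCondition}) then so does $g_0$, with $\int_0^1|g_0|^2/k\,dk$ bounded in terms of $\int_0^1|f_0|^2/k\,dk$ and $|c_0|^2\le \|f_0\|_2^2\|\varphi_0\|_2^2$; analogously for (\ref{BoltzmannPhononTheoremExponentialDecayCondition2}), the limit of $g_0$ at $0$ is the same $a$ as that of $f_0$.

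Once these three points are in place, Lemma \ref{TheoremExponentialDecay} applied to $g$ yields
\begin{equation*}
\|g(t)\|_2 = \|f(t)-c_0\varphi_0\|_2 \leq C\,\|g_0\|_2\,(1+t)^{-1/2} = C\,\|f_0-\mathbb{P}(f_0)\|_2\,(1+t)^{-1/2},
\end{equation*}
which is exactly (\ref{S4Ecorollary}). The only mildly delicate step is the bookkeeping of the constant $C$: Lemma \ref{TheoremExponentialDecay} provides a constant depending on the quantity in (\ref{BoltzmannPhononTheoremExponentialDecayCondition}) or (\ref{BoltzmannPhononTheoremExponentialDecayCondition2}) evaluated on $g_0$, and one must verify that this can be controlled in terms of the corresponding quantity for $f_0$ together with the fixed, finite constants determined by $\varphi_0$ and by the global behavior of $\Gamma$ and $\|K(k,\cdot)\|_2$ near the origin. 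Since $\varphi_0$ is fixed once for all and the estimates are additive, this is routine and poses no real obstacle.
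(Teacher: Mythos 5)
Your proposal is correct and follows essentially the same route as the paper: both shift by $\mathbb{P}(f_0)=c_0\varphi_0$, observe that the shifted function solves (\ref{E8bis}) with initial datum $g_0=f_0-\mathbb{P}(f_0)$ satisfying $\mathbb{P}(g_0)=0$ and inheriting condition (\ref{BoltzmannPhononTheoremExponentialDecayCondition}) or (\ref{BoltzmannPhononTheoremExponentialDecayCondition2}) (thanks to $\varphi_0(k)\sim \sqrt{30}\,k/\pi^2$ near $k=0$), and then apply Lemma \ref{TheoremExponentialDecay}. The only cosmetic difference is that you verify directly that $f-\mathbb{P}(f_0)$ solves the equation via $E(\varphi_0)=0$, whereas the paper solves from the datum $g_0$ and identifies the two solutions by the uniqueness part of Proposition \ref{TheoremExistence}.
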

\begin{proof}
If $c_0=\int _0^\infty f_0(k)\varphi _0(k)dk=0$, the conclusion follows from Lemma \ref{conditiondecay}. 
Suppose that $c_0\not = 0$. Consider then the initial data
$$
g_0=f_0-\mathbb{P}(f_0).
$$
By the properties of $\varphi _0$ and the hypothesis on $f_0$, it easily follows that $g_0$ satisfies all the hypothesis of Lemma \ref{TheoremExponentialDecay} and Lemma \ref{conditiondecay}. The solution $g$ of the problem  (\ref{E8bis})--(\ref{E12b}) with initial data $g_0$ satisfies then
\begin{equation}
\label{estimateg}
\|g(t)\|_{2}\leq  C(1+t)^{-\frac{1}{2}}||g_0||_2.
\end{equation}
Notice on the other hand that the function
$$
G(t, k)=f(t, k)-\mathbb{P}(f_0)
$$
is also a solution of (\ref{E8bis})--(\ref{E12b}) with initial data $g_0$ satisfying properties (\ref{propsol0})--(\ref{propsol2}).
Then, by the uniqueness of solution to  (\ref{E8bis})--(\ref{E12b}) proved in Proposition \ref{TheoremExistence}, $g=f-\mathbb{P}(f_0)$ and 
(\ref{S4Ecorollary}) follows from (\ref{estimateg}).
\end{proof}

\noindent
\begin{proof}\textbf{of Theorem \ref{BoltzmannPhononTheoremExponentialDecay}}. The point (i) follows from Proposition \ref{TheoremExistence}. The point (ii) follows from Corollary \ref{corollary}.
\end{proof}

We do not know if the rate of convergence obtained in  Theorem \ref{BoltzmannPhononTheoremExponentialDecay} is optimal.
One may also wonder whether it is necessary to impose one of the conditions (\ref{BoltzmannPhononTheoremExponentialDecayCondition}), (\ref{BoltzmannPhononTheoremExponentialDecayCondition2})  in order to have the algebraic decay (\ref{BoltzmannPhononTheoremExponentialDecayExDecay}). We do not know neither if these  conditions are optimal in any sense. But we show in the next Lemma that it is not possible to have any convergence rate uniform  for all the functions in $L^2(\RR_+)\cap L^2(\Gamma )$, without any other restriction. More precisely, we have the following.

\begin{lemma}
\label{noexpdecay} There is no  function $\rho (t)\ge 0$ satisfying  $\overline{\lim} _{ t\to \infty }\rho (t)<1$ and  such that, 
for  all data $f_0\in L^2(\RR_+)\cap L^2(\Gamma )$,  the solution of (\ref{E8bis})--(\ref{E12b}) given by Proposition \ref{TheoremExistence} satisfies:
\begin{equation}\label{ExDecay}
\|f(t)-\mathbb{P}(f_0)\|_{2}\leq \rho (t)\|f_0-\mathbb{P}(f_0)\|_{2},\,\,\,\forall t>0.
\end{equation}
\end{lemma}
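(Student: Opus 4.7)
My plan is to argue by contradiction. Suppose such $\rho(t)$ exists; pick $\rho_0\in(\limsup\rho,1)$ and $T>0$ with $\rho(T)\leq\rho_0$, so that the hypothesis gives
\begin{equation*}
\|f(T)-\mathbb{P}(f_0)\|_2 \leq \rho_0\,\|f_0-\mathbb{P}(f_0)\|_2
\end{equation*}
for every admissible $f_0$. I will produce a sequence of initial data $f_{0,n}\in L^2(\RR_+)\cap L^2(\Gamma)$ with $\mathbb{P}(f_{0,n})=0$, $\|f_{0,n}\|_2\to 1$, and whose solutions satisfy $\|f_n(T)\|_2\to 1$, which contradicts the assumed uniform contraction. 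The underlying heuristic is a Weyl-type construction: because $\Gamma$ ranges continuously over $[0,\infty)$ with $\Gamma(k)\to 0$ as $k\to 0$, data concentrated at very small $k$ are approximate null vectors of the generator $E$ and so should barely move over any fixed time.

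Concretely, fix $\chi\in C_c^\infty((1,2))$ with $\|\chi\|_2=1$, set $k_n=1/n$, and define $\psi_n(k)=k_n^{-1/2}\chi((k-k_n)/k_n)$, supported in $[k_n,3k_n]$ with $\|\psi_n\|_2=1$. Put $f_{0,n}=\psi_n-\langle\psi_n,\varphi_0\rangle\varphi_0$. Since $\|\varphi_0\|_2=1$ we have $\mathbb{P}(f_{0,n})=0$, and since $\varphi_0(k)\sim c\,k$ as $k\to 0$ a direct computation gives $|\langle\psi_n,\varphi_0\rangle|=O(k_n^{3/2})$, so $\|f_{0,n}\|_2\to 1$. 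Because $\Gamma\varphi_0$ and $T_2\varphi_0$ coincide pointwise (Corollary \ref{zerovp}) and both lie in $L^2$, the identity $E\varphi_0=0$ holds in $L^2$, so $Ef_{0,n}=E\psi_n$; I estimate the two summands by
\begin{equation*}
\|\Gamma\psi_n\|_2^2\leq\bigl(\sup_{[k_n,3k_n]}\Gamma\bigr)^{2}=O(k_n^2),\qquad \|T_2\psi_n\|_2^2\leq 4\int_{k_n}^{3k_n}\|K(\cdot,k')\|_2^2\,dk'=O(k_n^5).
\end{equation*}
The first uses $\Gamma(k)\sim\pi^4 k/15$ near zero (formula \eqref{E15}); the second follows from the Taylor expansion $\phi(k-k')-\phi(k+k')=-2k'\phi'(k)+O(k'^3)$ for $k\geq k'$ small, which yields $K(k,k')=-2kk'^2\phi'(k)+O(kk'^4)$ and therefore $\|K(\cdot,k')\|_2^2=O(k'^4)$. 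Hence $\|Ef_{0,n}\|_2\to 0$.

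The final ingredient is the $L^2$ bound $\|f_n(T)-f_{0,n}\|_2\leq T\|Ef_{0,n}\|_2$, which I obtain by revisiting the Galerkin scheme in the proof of Proposition \ref{TheoremExistence}. Each truncated operator $E_m$ is bounded on $L^2$ and generates a contraction semigroup, so $\partial_t f^{(m)}(t)=e^{tE_m}E_m f_{0,n}^{(m)}$ has $L^2$ norm at most $\|E_m f_{0,n}^{(m)}\|_2$, giving $\|f^{(m)}(T)-f_{0,n}^{(m)}\|_2\leq T\|E_m f_{0,n}^{(m)}\|_2$. The convergences $f^{(m)}(T)\to f_n(T)$ in $L^2$ (from Proposition \ref{TheoremExistence}) and $E_m f_{0,n}^{(m)}\to Ef_{0,n}$ in $L^2$ (by dominated convergence, using $K\in L^2_{\mathrm{loc}}$ together with $\Gamma f_{0,n},\,T_2 f_{0,n}\in L^2$ for our specific $f_{0,n}$) transfer the estimate to the limit. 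Combining, $\|f_n(T)\|_2\geq\|f_{0,n}\|_2-T\|Ef_{0,n}\|_2\to 1$, while the standing hypothesis yields $\|f_n(T)\|_2\leq\rho_0\|f_{0,n}\|_2\to\rho_0<1$: contradiction.

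The main obstacle is the last step. Proposition \ref{TheoremExistence} only provides $\partial_t f\in L^2(L^2(\Gamma^{-1}))$, not $L^\infty(L^2)$, so the bound $\|f_n(T)-f_{0,n}\|_2\leq T\|Ef_{0,n}\|_2$ is not immediate and must be recovered by passing to the limit in the Galerkin scheme. Verifying $E_m f_{0,n}^{(m)}\to Ef_{0,n}$ in $L^2$ is not automatic for a general element of $L^2\cap L^2(\Gamma)$, but for our explicit $f_{0,n}$ it follows from the compact support of $\psi_n$, the Taylor estimate on $K$ above, and the pointwise identity $T_2\varphi_0=\Gamma\varphi_0\in L^2$.
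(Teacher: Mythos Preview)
Your argument is correct and reaches the same contradiction as the paper, but by a genuinely different route. The paper works entirely through the quadratic form: writing $g=f-\mathbb{P}(f_0)$, it combines the energy identity $\|g_0\|_2^2-\|g(T)\|_2^2=-2\int_0^T\langle E(g),g\rangle\,dt$ with the Cauchy--Schwarz inequality for the Dirichlet form (Corollary~\ref{S2Cor2}) applied to $\|g(T)-g_0\|_2^2\ge 0$, obtaining $-\int_0^T\langle E(g),g\rangle\,dt\le -T\langle E(g_0),g_0\rangle$. This yields the uniform bound $\delta\|g_0\|_2^2\le CT\|\sqrt{\Gamma}\,g_0\|_2^2$ for all $g_0\in L^2\cap L^2(\Gamma)$ with $\mathbb{P}g_0=0$, which fails for $g_0$ supported in a sufficiently small neighbourhood of the origin by \eqref{propG1}. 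No explicit sequence is built and no regularity of $\partial_t f$ beyond what Proposition~\ref{TheoremExistence} already gives is needed.

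Your approach is instead a Weyl--sequence construction: you exhibit explicit approximate null vectors $f_{0,n}$ and transfer the semigroup bound $\|f(T)-f_0\|_2\le T\|Ef_0\|_2$ from the truncated problems. The estimates $\|\Gamma\psi_n\|_2=O(k_n)$ and $\|K(\cdot,k')\|_2^2=O(k'^4)$ (hence $\|T_2\psi_n\|_2=O(k_n^{5/2})$) are correct; the latter is a bit sharper than what Lemma~\ref{kernelK} records, but your Taylor argument is valid. The passage to the limit $E_m(\chi_m f_{0,n})\to Ef_{0,n}$ in $L^2$ does go through for these particular data (compact support of $\psi_n$, $\Gamma\varphi_0=T_2\varphi_0\in L^2$, exponential decay of $\phi$ and its derivatives), though it is the step that needs the most writing out. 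What you gain is a concrete sequence of counterexamples and an operator-theoretic picture of the obstruction; what the paper's route buys is brevity, since Corollary~\ref{S2Cor2} avoids the Galerkin limit entirely.
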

\begin{proof} Suppose by contradiction that such a function $\rho $ do exists. Let us call, $g(t, k)=f(t, k)-\mathbb{P}(f_0)(k)$. 
From $(\ref{ExDecay})$ we deduce that, for any $T>0$:
\begin{eqnarray}
 \|g_0\|_{2}^2- C\|g_0\|^2_{ 2 }\,\rho (T)\le \|g_0\|_{2}^2-\|g(T)\|_{2}^2=-\int_0^T\langle E(g), g\rangle dt. \label{LemmaAlphaGreaterConstant1}
\end{eqnarray}
By (\ref{LemmaAlphaGreaterConstant1}),  there exists $\delta >0$ and $T_0>0$ such that if $T>T_0$,
\begin{equation}\label{LemmaAlphaGreaterConstant10}
\delta  \|g_0\|_{2}^2\le \|g_0\|_{2}^2-\|g(T)\|_{2}^2\le -\int_0^T\langle E(g), g\rangle dt. 
\end{equation}
In order to estimate the right hand side of (\ref{LemmaAlphaGreaterConstant10}) we consider  the norm of $\|g(T)-g_0\|_2^2$:
\begin{eqnarray*}
\|g(T)-g_0\|_2^2 & = & 2\int_0^T\left<\partial_t g,g-g_0\right>dt\\
& = & \int_0^T 2\left<E (g),g-g_0\right>dt\\
&=&2\int_0^T\langle E(g), g\rangle dt-2\int_0^T\left<E(g),g_0\right>dt\\
&\le&\int_0^T\langle E(g), g\rangle dt-\int_0^T\langle E(g_0), g_0\rangle dt.
\end{eqnarray*}
where, in the last step, we have used (\ref{S2Cor2E1}) in Corollary \ref{S2Cor2}. \\
We then have:
\begin{eqnarray}
\label{S2E9}
-\int_0^T\langle E(g), g\rangle dt&\leq &-T\langle E(g_0), g_0\rangle,
\end{eqnarray}
Since $g_0\in L^2(\Gamma )$, by (\ref{Lemma2E1}):
\begin{equation}
\label{S2E10}
-\langle E(g_0), g_0\rangle \le C_0\|g_0\| _{ L^2(\Gamma  ) }^2=C_0\|\sqrt \Gamma  g_0\|_2^2.
\end{equation}
We deduce from (\ref{LemmaAlphaGreaterConstant10}), (\ref{S2E9}) and (\ref{S2E10}) that, for all $g_0\in L^2(\RR_+)\cap L^2(\Gamma)$:
\begin{equation}
\label{S4E10}
\|g_0\|_2^2\le \frac{TC_0}{\delta }\|\sqrt \Gamma  g_0\|_2^2
\end{equation}
By property (\ref{propG1}) of the function $\Gamma $ this is not possible if $g_0\in L^2(\RR_+)\cap L^2(\Gamma)$ with support in an interval $(k_1, k_2)$, with  $0<k_1<k_2$ sufficiently small.
\end{proof}

\begin{remark}\label{Remark1} The results in the Appendix say  that  
\begin{eqnarray*}
&&\Gamma (k)\sim \frac {\pi k} {15 },\,\,\, k\to 0,\\
&&||K(k, \cdot)||_2\le \frac {2\pi ^3 k} {\sqrt {21}},\,\,\, 0<k<<1.
\end{eqnarray*} 
 This suggest that
a very rough approximation of the equation (\ref{E8bis})  near $k=0$ could be given by
\begin{eqnarray*}
\label{BoltzmannPhononRemark1}
&&\frac{d}{dt}f(t, k)= -Ckf(t, k),\,\, for\,\, t>0, \,\,k\,\,small \\
&& f(0, k)=f_0(k)\,\,for\,\,k\,\,small,
\end{eqnarray*}
for some constant $C$. By the positivity of the operator $E$ it seems reasonable to have $C>0$. Since the solution $f$ of that simple equation is
$$f(t, k)=e^{-Ckt}f_0(k),\,\,\,\forall t>0,$$
we have
\begin{eqnarray*}
&&\int_0^{k_0}|f(t, k)|^2dk=\int_0^{k_0}|f_0(k)|^2 e^{-2Ckt}dk,\,\,\,\forall t>0.
\end{eqnarray*}
Therefore, if $f_0$ satisfies (\ref{BoltzmannPhononTheoremExponentialDecayCondition}),
\begin{eqnarray*}
&&\int_0^{k_0}|f(t, k)|^2dk\le\frac {1} {2Ct}\int_0^{k_0}\frac {|f_0(k)|^2} {k} dk,\,\,\,\forall t>0.
\end{eqnarray*}
If on the other hand, $f_0$ is continuous at $k=0$, 
$$t\int_0^{k_0}|f(t, k)|^2dk=\frac {1} {2C}\int _0^{2Ck_0t}\left|f_0\left(\frac {x} {2Ct}\right)\right|^2 e^{-x}dx.$$
Since, by (\ref{BoltzmannPhononTheoremExponentialDecayCondition2}),
$$
\lim _{ t\to \infty }t\int_0^{k_0}|f(t, k)|^2dk=\frac {a^2} {2C}
$$
we deduce 
$$
\int_0^{k_0}|f(t, k)|^2dk=\frac {a^2} {2C t}+o\left(\frac {1} {t}  \right),\,\,\hbox{as}\,\,t\to \infty.
$$
The convergence rate $(\ref{BoltzmannPhononTheoremExponentialDecayExDecay})$ seems then in some sense optimal.
\end{remark}
\section{Proofs of  Proposition \ref{Prop1} and Theorem \ref{theorem}.}
\label{nonradial}
We give in this Section the proofs of Proposition \ref{Prop1} and  of Theorem \ref{theorem}. These follow easily from the results that have been proved in Sections  \ref{SectionOperatorE},  \ref{SectionExistence} and  \ref{Rate}. We begin with the proof of the Proposition.\\ 

\begin{proof}\textbf{ of Proposition \ref{Prop1}.}
Point  (i) follows immediately from  the orthogonality property of the spherical harmonic functions and the fact that 
$ |p| \in L^2\left(\RR^+, \frac {dp} {\sinh ^{2}(k)} \right)$. 
In order to prove point (ii)  let us notice first of all that, if $f(k)$ is such that $f\in L^2(\RR^+)$, respectively  $f\in L^2(\Gamma )$, and we consider the function $g$ defined by the change of variables (\ref{E8}):
$$
g(p)\equiv g(|p|)=\frac{\sinh (k)} {k} f(k),\,\,\, k=\frac {c |p|} {2k_BT}
$$
then $g\in L^2\left(\RR^+, \frac {k^2} {\sinh^2(k)}dr\right)$, respectively  $g\in L^2\left(\RR^+, \frac {k^2\Gamma (k)} {\sinh^2(k)}dr\right)$. 
Moreover, by definition
$$
L(g)(|p|)=(k\, \sinh k)\,  E(f)(k),
$$
where $L$ is defined in (\ref{E5000-2}).  Then, if $f\in L^2(\Gamma )$, we have $E(f)\in L^2(\Gamma ^{-1})$ by Lemma (\ref{Lemma2}), and therefore 
$L(g)\in L^2\left(\RR^+, \frac {k^2 \,\sinh ^{2}(k)dr} {\Gamma (k)} \right)$. \\
We then deduce  that
$L(|p|)\in L^2\left(\RR^+, \frac {k^2 \,\sinh ^{2}(k)dr} {\Gamma (k)} \right)$ and therefore
$$\Lambda(p)=-M(p)\Theta (p)+\int  _{ \RR^3 } \Theta (p')\,W(p, p')dp'\in L^2\left(\frac {\,\sinh ^{2}(k)dp} {\Gamma (k)} \right)$$
It is then enough to check that all the components $\Lambda _{ \ell\, m }$ of the function $\Lambda$ in the spherical harmonic basis are zero. Using the orthonormality properties of the spherical harmonic functions $Y _{ \ell\, m }$ and the definitions of the Legendre's polynomial we readily check  that these components are, up to a constant factor:
$$
\Lambda _{ \ell\, m }(|p|)=-M(|p|)\Theta _{ \ell\, m }(|p|)+ \frac {1} {2\ell +1}\int  _{ 0 }^\infty \Theta _{ \ell\, m }(r')W _{ \ell }(|p|, r') dr'
$$

Since, by Corollary \ref{coro}, the function $\phi (k)$ satisfies $E(\phi )=0$ and the function 
$\Theta_{ \ell\, m }(r)=c _{ \ell\, m }r$ is obtained from $\phi (k)$ through the change of variables (\ref{E8}), it follows that $\Lambda _{ \ell\, m }(|p|)=0$ for all $\ell$ and $m$.
\end{proof}

\begin{proof}\textbf{ of Theorem \ref{theorem}.}
We decompose  the initial data $\Omega _0$ that by hypothesis belongs to $ L^2\left(\RR^3, \frac {dp} {\sinh ^{2}(k)} \right)$ using the basis of $L^2(\Sx^2)$ of spherical harmonics:
\bean
\Omega _0(p)=\sum _{ \ell=0 }^\infty \sum _{ m=-\ell }^\ell\Omega  _{ 0, \ell\, m }(|p|)Y _{ \ell\, m }\left(\frac {p} {|p|} \right).
\eean
Using the orthonormality of the basis $\{Y _{ \ell\, m } \} $ we deduce
\bean
||\Omega_0||^2 _{ L^2\left(\RR^3, \frac {dp} {\sinh ^{2}(k)} \right) }&=&\int  _{ \RR^3 }\left| \sum _{ \ell=0 }^\infty \sum _{ m=-\ell }^\ell\Omega  _{ 0, \ell\, m }(|p|)Y _{ \ell\, m }\left(\frac {p} {|p|} \right)\right|^2 \frac {dp} {\sinh ^{2}(k)}\\
&=&\int  _{ \Sx^2} d\sigma \int _0^\infty\left| \sum _{ \ell=0 }^\infty \sum _{ m=-\ell }^\ell\Omega  _{ 0, \ell\, m }(|p|)Y _{ \ell\, m }\left(\sigma  \right)\right|^2\frac {|p|^2d|p|} {\sinh ^{2}(k)}\\
&=& \sum _{ \ell=0 }^\infty \sum _{ m=-\ell }^\ell \int _0^\infty |\Omega  _{ 0, \ell\, m }(|p|)|^2\frac {|p|^2d|p|} {\sinh ^{2}(k)},
\eean
and then:
\bean
\Omega  _{ \ell\, m }\in L^2\left(\RR^+; \frac {|p|^2d|p|} {\sinh ^{2}(k)}\right),\,\,\,\forall \ell \in \NN, m\in \{-\ell, -\ell+1, \cdots, \ell-1, \ell\}
\eean
Therefore, if we define:
\bear
f _{ 0, \ell, m }(k)=k \frac{\Omega _{0, \ell\, m}(|p|)}{\sinh k}, \,\,\,k=\frac{c|p|}{2k_BT}  \label{E8initial}
\eear
it follows that $f _{ 0, \ell, m }\in L^2(\RR^+)$. Let then be $f _{ \ell, m }$ the solution of the equation  (\ref{E8bis}) with initial data $f _{ 0, \ell\, m }$ given by Theorem \ref{BoltzmannPhononTheoremExponentialDecay} and define:
\bear
\label{S5E127}
\Omega _{\ell\, m}(t, r)=f _{\ell, m }(t, k )\frac{\sinh k}{k },   \,\,\,\,\, k =\frac{cr}{2k_BT}.  \label{E8initial}
\eear
It follows  from (\ref{propsol3000}) that:
\bear
||\Omega _{\ell\, m}(t)||^2 _{ L^2\left(\mathbb{R}_+; \frac {r^2} {\sinh^2 k }\right) }\le 2||\Omega _{0, \ell\, m}||^2 _{ L^2\left(\mathbb{R}_+; \frac {r^2} {\sinh^2 k }\right) }\,\,\,\forall t>0.\label{S5EW0}
\eear
We deduce that
\bear
\sum _{ \ell=0 }^\infty \sum _{ m=-\ell }^\ell
||\Omega _{\ell\, m}(t)||^2 _{ L^2\left(\mathbb{R}_+; \frac {r^2} {\sinh^2 k }\right) }&\le & 2\sum _{ \ell=0 }^\infty \sum _{ m=-\ell }^\ell
||\Omega _{0, \ell\, m}||^2 _{ L^2\left(\mathbb{R}_+; \frac {r^2} {\sinh^2 k }\right) }\nonumber\\
&=& 2\,||\Omega _0||^2 _{  L^2\left(\RR^3, \frac {dp} {\sinh ^{2} k} \right) }\label{S5EW1}
\eear
and  the following  function is then well defined in  $L^2\left(\RR^3, \frac {dp} {\sinh ^{2}k} \right)$ for all $t>0$:
\bean
\Omega (t, p)=\sum _{ \ell=0 }^\infty \sum _{ m=-\ell }^\ell \Omega  _{ \ell\, m }(|p|)Y _{ \ell\, m }\left(\frac {p} {|p|} \right). 
\eean
It follows from (\ref{S5EW0}), (\ref{S5EW1}) and (\ref{propsol1}) that $\Omega $ satisfies (\ref{S1E250}).

Similarly, by (\ref{propsol3000}) and (\ref{propsol3250}):  
\bean
&&\left|\left|\frac {\partial f _{ \ell\, m }} {\partial t}\right|\right|_{L^2(0, \infty; L^2(\Gamma ^{-1}(k)dk))}\le \frac{1+2C_0}{\sqrt {C_*}}\left|\left| f _{0, \ell\, m }\right|\right|_{L^2}
\eean
and then
\bean
&&\left|\left|\frac {\partial \Omega  _{ \ell\, m }} {\partial t}\right|\right|_{L^2\left(0, \infty; L^2\left(\frac {r^2} {\Gamma(k ) \, \sinh^2 k }\right)\right)}\le \frac{1+2C_0}{\sqrt {C_*}}||\Omega _{0, \ell\, m}||^2 _{ L^2\left(\mathbb{R}_+; \frac {r^2} {\sinh^2 k}\right) }
\eean

Using that $M(p)\equiv M(r) =\Gamma (k)n_0(p)(1+n_0(p))$ and $n_0(p)(1+n_0(p))=1/(4\sinh^2 k)$ we have:
\bean
\left|\left|\frac {\partial \Omega  _{ \ell\, m }} {\partial t}\right|\right|_{L^2\left(0, \infty; L^2\left(\frac {r^2 } {M(r) \sinh^4k\, }\right)\right)}\le \frac{1+2C_0}{\sqrt {C_*}}||\Omega _{0, \ell\, m}||^2 _{ L^2\left(\mathbb{R}_+; \frac {r^2} {\sinh^2 k}\right) },
\eean
and
\bean
&&\sum _{ \ell=0 }^\infty\sum _{ m=-\ell }^\ell \left|\left|\frac {\partial \Omega  _{ \ell\, m }} {\partial t}\right|\right|^2_{L^2\left(0, \infty; L^2\left(\frac {r^2 } {M(r) \sinh^4k\, }\right)\right)}\le \frac{(1+2C_0)^2}{C_*}\times \\
&&\hskip 2cm \times\sum _{ \ell=0 }^\infty\sum _{ m=-\ell }^\ell ||\Omega _{0, \ell\, m}||^2 _{ L^2\left(\mathbb{R}_+; \frac {r^2} {\sinh^2 k}\right) }=\frac{(1+2C_0)^2}{C_*}||\Omega_0||^2 _{ L^2\left(\RR^3, \frac {dp} {\sinh ^{2}(k)} \right) }.
\eean
The following  function:
\bean
\sum _{ \ell=0 }^\infty \sum _{ m=-\ell }^\ell \frac{\partial \Omega  _{ \ell\, m }} {\partial t}(|p|)Y _{ \ell\, m }\left(\frac {p} {|p|} \right) 
\eean
is then well defined in  $L^2\left(\RR^3, \frac {dp} {M(|p|)\, \sinh ^{4}k} \right)$ for all $t>0$ and
\bean
\frac {\partial \Omega } {\partial t}(t, p)=\sum _{ \ell=0 }^\infty \sum _{ m=-\ell }^\ell \frac{\partial \Omega  _{ \ell\, m }} {\partial t}(|p|)Y _{ \ell\, m }\left(\frac {p} {|p|} \right).
\eean

Since $f _{ \ell\, m }(t, k)$ satisfies the equation  (\ref{E8bis})--(\ref{E12}) in $L^2((0, \infty), L^2(\Gamma ^{-1}))$, and $M(p)\equiv M(r)=\Gamma (k)n_0(p)(1+n_0(p)$, $n_0(p)(1+n_0(p))=1/(4\sinh^2 k)$,
 the function $\Omega _{ \ell\, m } $ satisfies equation (\ref{E5000-1}), (\ref{E5000-2}) in $L^2\left(\RR^+, \frac {r^2dr} {M(r)\, \sinh ^{4}k} \right)$. One easily deduces that $\Omega $ satisfies equation (\ref{E51}) in  $L^2\left(\RR^3, \frac {dp} {M(p)\, \sinh ^{4}k} \right)$.  The two properties in (\ref{S1Einitial}) are deduced  from those in (\ref{propsol30}) using similar arguments.
 
We wish  to prove the uniqueness of solutions of (\ref{E51}) in the sense of $L^2\left(0, \infty;  L^2\left(\RR^3, \frac {dp} {M(p)\, \sinh^4 k}\right)\right)$, satisfying (\ref{S1E250})--(\ref{S1E252}) and such that
\bear
\label{S5E100}
\lim _{ t\to 0 }||\Omega (t)-\Omega _0|| _{ L^2\left(\RR^3, \frac {dp} {\sinh ^{2}k} \right)}=0.
\eear
To this end we suppose that $\Omega_1 $ and $\Omega_2 $ are two such solutions and call $\widetilde \Omega =\Omega _1-\Omega _2$. It is then also a solution  of (\ref{E51}) in  $L^2\left(0, \infty;  L^2\left(\RR^3, \frac {dp} {M(p)\, \sinh^4 k}\right)\right)$, satisfying (\ref{S1E250})--(\ref{S1E252}) and (\ref{S5E100}) with $\Omega _0=0$. It then follows that the modes $\widetilde \Omega  _{ \ell\, m }$ of $\widetilde \Omega$ satisfy equation  (\ref{E5000-1})-(\ref{E5000-2}) with initial data $\widetilde \Omega  _{ \ell\, m }(0)=0$. By the uniqueness part of Theorem (\ref{BoltzmannPhononTheoremExponentialDecay}) it follows that $\widetilde \Omega _{ \ell\, m }=0$ for each $\ell$ and $m$ and then $\widetilde \Omega \equiv0$.

 Suppose now that $\Omega _0(p)$ also satisfies (\ref{S1Econdition}). Then, for every $\ell$ and $m$, the function $f _{ 0, \ell\, m }(k)$, defined in  (\ref{E8initial}), satisfies (\ref{BoltzmannPhononTheoremExponentialDecayCondition}). By Theorem \ref{BoltzmannPhononTheoremExponentialDecay} we then have:
 \bear
 \label{S5E100}
 ||f _{ \ell\, m }(t)-c _{ 0,\ell\, m }\varphi _0|| _{ 2 }\le C\frac {||f _{ 0, \ell\, m }-c _{ 0,\ell\, m }\varphi _0||_2} {(1+t)^{1/2}}
 \eear
where
 \bear
 \label{S5E101}
 c _{ 0,\ell\, m }=\int _0^\infty f _{ 0, \ell\, m }(k)\varphi _0(k)dk.
 \eear
 Therefore, using (\ref{S5E127}) we deduce
 \bean
\int _0^\infty\left|\Omega  _{ \ell\, m }(t, r)-c _{ \ell\, m }r  \right|^2\frac {r^2dr} {\sinh^2k}\le
\frac {C} {1+t}\int _0^\infty\left|\Omega  _{0, \ell\, m }(r)-c _{ \ell\, m }r  \right|^2\frac {r^2dr} {\sinh^2 k}
\eean
where
$$
c _{ \ell\, m }=\frac {c } {2k_BT ||\phi ||_2}c _{0, \ell\, m }.
$$
If we sum now with respect to $\ell$ and $m$ we obtain 
\bean
&&\hskip -1cm ||\Omega (t)-\Theta ||^2_{L^2\left(\RR^3, \frac {dp } {\sinh^{2} k}\right)  }=\sum _{ \ell=0 }^\infty\sum _{ m=-\ell }^\ell \int  _{\RR^3}\left|\Omega  _{ \ell\, m }(t, |p|)-c _{ \ell\, m }|p|  \right|^2\frac {dp} {\sinh^2 k}\\
&&\hskip 3cm \le \frac {C} {1+t}\sum _{ \ell=0 }^\infty\sum _{ m=-\ell }^\ell \int  _{\RR^3}\left|\Omega  _{0, \ell\, m }(|p|)-c _{ \ell\, m }|p|  \right|^2
\frac {dp} {\sinh^2 k}\\
&&\hskip 3cm=\frac {C} {1+t}||\Omega (0)-\Theta ||^2_{L^2\left(\RR^3, \frac {dp } {\sinh^{2} k}\right)  }.
\eean
Since 
\bean
c _{ \ell\, m }&=&\frac {c } {2k_BT ||\phi ||_2}\int _0^\infty f _{ 0, \ell\, m }(k)\varphi _0(k)dk\\
&=&\left(\frac {c} {2k_BT} \right)^4\frac {1} {||\phi ||^2_2}\int _0^\infty \frac {\Omega  _{ 0, \ell\, m }(r)} {\sinh^2k}r^2dr\\
&=&\left(\frac {c} {2k_BT} \right)^4\frac {4} {||\phi ||^2_2}\int  _{ \RR^3 }\Omega  _{ 0}(p) Y _{ \ell\, m }\left(\frac {p} {|p|} \right)
n_0(p)(1+n_0(p))dp
\eean
and $||\phi ||_2^2=\pi ^4/30$, this concludes the proof of (\ref{S1ERatedecay})-(\ref{S1Eclm}).
 \end{proof}

\begin{remark}
\label{totalmass}
The total number of particles in the physical system described by equation (\ref{E1})-(\ref{E2}) is given  by 
$$
N(t)=\int _{ \RR^3 }n(t, p)dp.
$$
The corresponding quantity in the linear approximation that we consider in this work is:
$$
M(t)=\int _{ \RR^3 }n_0(p)dp+\int  _{ \RR^3 }n_0(p)(1+n_0(p))\Omega (t, p)dp,
$$

It follows from Theorem \ref{theorem} that, if the initial data $\Omega _0\in L^2(\RR^3)$ satisfies (\ref{S1Econdition}) then:
$$
\lim _{ t\to \infty }M(t)=\int _{ \RR^3 }n_0(p)dp+\int  _{ \RR^3 }n_0(p)(1+n_0(p))\Theta (p)dp\equiv M_\infty.
$$
where $\Theta$ is defined by (\ref{S1ETeta}) (\ref{S1Eclm}). It is easy to see that $M _{ \infty }$ may be greater or smaller than  $M(0)$. If we choose the initial data $\Omega _0=\Theta+g_0$
then,
\begin{eqnarray*}
M(0)&=&\int _{ \RR^3 }n_0(p)dp+\int _{\RR^3}n_0(p)(1+n_0(p)) \Omega _0(p)dp\\
&=&\int _{ \RR^3 }n_0(p)dp+\int _{\RR^3}n_0(p)(1+n_0(p)) \Omega _0(p)dp+\int _{\RR^3}n_0(p)(1+n_0(p)) g _0(p)dp\\
&= &M _{ \infty }+\int _{\RR^3}n_0(p)(1+n_0(p)) g _0(p)dp.
\end{eqnarray*}
The sign of $M(0)-M _{ \infty }$ is then given by the sign of $\int _{\RR^3}n_0(p)(1+n_0(p)) g _0(p)dp$ and may be positive or negative.
\end{remark}

\section{Appendix}
In this Appendix we recall the definition of Legendre's polynomials, we describe the formal approximation argument leading to the simplified equation (\ref{E5000-1})-(\ref{E5000-2}) and  present  some auxiliary results on the functions $\Gamma $ and $K$ that appear in the operator $E$ defined in (\ref{E8bis}).
\subsection{The functions $\Gamma $ and $K$.}
We present in this Appendix some auxiliary results, in  particular several  properties of the functions $\Gamma $ and $K$ that are needed in the proof of our main results. They have already been obtained in  \cite{Buot:ORT:1997} and we state and prove them here just  for the sake of completeness.
\begin{lemma}
\label{Gamma}
The function $\Gamma $  defined in (\ref{E10}), (\ref{E12}) satisfies $\Gamma \in C(0, \infty)$ and $\Gamma (k)>0$ for all $k>0$. Moreover,
\begin{eqnarray}
\lim _{ k\to 0 }\frac {\Gamma (k)} {k}=\frac {\pi ^4} {15}\label{propG1}\\
\lim _{ k\to \infty }\frac {\Gamma (k)} {k^5}=\frac {1} {15}.\label{propG2}
\end{eqnarray}
\end{lemma}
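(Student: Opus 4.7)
I would first handle continuity and positivity of $\Gamma$, which are immediate from \eqref{E10}: the integrand is continuous in $(k,k')$ on $(0,\infty)^2$ and, on any compact subinterval of $(0,\infty)$, is dominated by an integrable function of $k'$ (using $\phi(k)\le Ck^2e^{-k}$ at infinity and $\phi(k)\sim k$ near $0$), so dominated convergence yields $\Gamma\in C(0,\infty)$; both factors in the integrand are nonnegative and strictly positive on a set of positive measure, giving $\Gamma(k)>0$.

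For the two asymptotic limits, my plan is to eliminate the prefactor $\sinh k$ via the elementary identity $\sinh(a+b)=\sinh a\cosh b+\cosh a\sinh b$, which gives
$$\frac{\sinh(a+b)}{\sinh a\,\sinh b}=\coth a+\coth b,\qquad \frac{\sinh(b-a)}{\sinh a\,\sinh b}=\coth a-\coth b\quad(0<a<b).$$
Splitting $\int_0^\infty\phi(|k-k'|)\phi(k')\,dk'$ at $k'=k$ and substituting $u=k'-k$ on the outer piece, I would combine the three resulting terms and rewrite $\Gamma(k)=A(k)+B(k)$ with
$$A(k)=\int_0^k(k-u)^2u^2\bigl(\coth u+\coth(k-u)\bigr)du,\quad B(k)=2\int_0^\infty u^2(k+u)^2\bigl(\coth u-\coth(k+u)\bigr)du,$$
the factor $2$ in $B(k)$ combining the original $\phi(k+k')$ contribution with the $\phi(k'-k)$ tail.

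For the limit at $k=0$, the piece $A(k)$ will be $O(k^4)$ since both the polynomial factor and the measure of the range shrink; for $B(k)$, the first-order expansion $\coth u-\coth(u+k)=k/\sinh^2 u+O(k^2)$ combined with dominated convergence (dominating function $u^2(u+1)^2/\sinh^2 u$ valid for $k\le 1$) will yield $\lim_{k\to 0}B(k)/k=2\int_0^\infty u^4/\sinh^2 u\,du$. Expanding $1/\sinh^2 u=4\sum_{n\ge 1}n\,e^{-2nu}$ and integrating term by term should identify this integral as $3\zeta(4)=\pi^4/30$, giving \eqref{propG1}. For the limit at $k=\infty$, I would use the symmetry $u\leftrightarrow k-u$ to reduce $A(k)$ to $2\int_0^k(k-u)^2u^2\coth u\,du$, and then split $\coth u=1+(\coth u-1)$: the leading piece contributes $2\int_0^k(k-u)^2u^2\,du=k^5/15$ exactly, while the remainder is $O(k^2)$ because $u^n(\coth u-1)\in L^1(0,\infty)$ for every $n\ge 0$. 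The piece $B(k)$ will also be $O(k^2)$ after the analogous splitting $\coth u-\coth(k+u)=(\coth u-1)-(\coth(k+u)-1)$, the second summand becoming $O(k^4 e^{-2k})$ after substituting $v=k+u$. Dividing by $k^5$ then yields \eqref{propG2}.

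The hard part will be handling simultaneously the singular behavior $\coth u\sim 1/u$ at the origin and the convergence $\coth u\to 1$ at infinity inside a single dominated convergence argument; the decomposition $\coth u=1+(\coth u-1)$ is the critical device, as it isolates the exactly computable polynomial contribution responsible for the leading $k^5/15$ term from an exponentially decaying remainder that is integrable against any polynomial weight.
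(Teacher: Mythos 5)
Your proof is correct, and it reaches the same intermediate objects as the paper's proof while using a genuinely different computational device. Structurally the two arguments coincide: both split $\int_0^\infty\phi(|k-k'|)\phi(k')\,dk'$ at $k'=k$ and shift the tail to obtain $\Gamma(k)=\sinh k\int_0^k\phi(k-k')\phi(k')\,dk'+2\sinh k\int_0^\infty\phi(k+k')\phi(k')\,dk'$, both identify the second term as dominant as $k\to0$ (with the same limiting integral $\int_0^\infty x^4\sinh^{-2}x\,dx=\pi^4/30$, which the paper simply quotes and you derive from the expansion of $\sinh^{-2}$), and both identify the first term as dominant as $k\to\infty$. The difference lies in how the prefactor $\sinh k$ and the hyperbolic denominators are treated. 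The paper keeps $\sinh k$ outside, rescales $k'=kz$ in the first integral, and runs a dominated convergence argument on $\sinh k/(\sinh(kz)\sinh(k(1-z)))$ after some explicit exponential estimates, ending with $2\int_0^1 z^2(1-z)^2\,dz=1/15$. You instead absorb $\sinh k$ through the addition formulas $\sinh(a+b)/(\sinh a\sinh b)=\coth a+\coth b$ and $\sinh(b-a)/(\sinh a\sinh b)=\coth a-\coth b$, which turns the large-$k$ leading term into the exact Beta integral $2\int_0^k u^2(k-u)^2\,du=k^5/15$ plus a remainder controlled by the integrability of $u^2(\coth u-1)$; this sidesteps the paper's somewhat delicate bound on $x^2/(1-e^{-Ax})$ and is, if anything, cleaner. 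Your small-$k$ argument via $\coth u-\coth(u+k)=k/\sinh^2\xi$ with $\xi\in(u,u+k)$ is likewise a valid substitute for the paper's direct passage to the limit. One small slip: $u^n(\coth u-1)$ is integrable on $(0,\infty)$ only for $n\ge1$, since $\coth u-1\sim 1/u$ as $u\to0$, so your parenthetical claim ``for every $n\ge0$'' fails at $n=0$; this is harmless because your estimates only use $n=2$ and $n=4$.
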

\begin{proof}
The continuity of $\Gamma $ follows immediately from the integrability properties of the integrand in  (\ref{E10}). The strict positivity of $\Gamma (k)$ for $k>0$ is deduced from the fact that the integrand in  (\ref{E10}) is non negative. In order to prove (\ref{propG1}) and (\ref{propG2}) we first notice that, by a simple change of variables, the function $\Gamma $ may be written as:
\begin{equation}
\label{GammaBis}
\Gamma (k)=\sinh k\int _0^k \phi (k-k')\phi (k') dk'+2\sinh k\int _0^\infty \phi (k+k')\phi (k') dk'
\end{equation}
By Lebesgue's convergence Theorem it follows that
$$
\lim _{ k\to 0 }\int _0^k \phi (k-k')\phi (k') dk'=0
$$
and
$$
\lim _{ k\to 0 }\int _0^\infty \phi (k+k')\phi (k') dk'=\int _0^\infty \frac {x^4} {\sinh ^2x}dx=\frac {\pi ^4} {30}
$$
from where (\ref{propG1}) follows.  

On the other hand, 
\begin{eqnarray*}
\sinh k \int _0^k\phi (k-k')\phi (k')dk'= k^5\sinh k\int _0^1\frac {z^2(1-z)^2} {\sinh (k(1-z))\sinh (kz)}dz
\end{eqnarray*}
But,
\begin{eqnarray*}
\sinh k\frac {z^2(1-z)^2} {\sinh (k(1-z))\sinh (kz)}=\frac {1-e^{-2k}} {2}\frac {e^{k}z^2(1-z)^2} {\frac {e^{k-kz}-e^{kz-k}} {2}\frac {e^{kz}-e^{-kz}} {2}}\\
=\frac {2(1-e^{-2k})z^2(1-z)^2} {(e^{-kz}-e^{kz-2k})(e^{kz}-e^{-kz})}=\frac {2(1-e^{-2k})z^2(1-z)^2} {1-e^{-2kz}-e^{-2k(1-z)}+e^{-2k}}\\
=2(1-e^{-2k})\frac {z^2} {(1-e^{-2kz})}\frac {(1-z)^2} {(1-e^{-2k(1-z)})}.
\end{eqnarray*}
And we observe that, 
\begin{eqnarray*}
\frac {x^2} {1-e^{-Ax}}\le \frac{x^2}{1-e^{-1/2}}, \,\,\forall x\in \left(\frac {1} {2A}, 1\right).
\end{eqnarray*}
When $Ax\in (0, 1/2)$, $1-e^{-Ax}\ge Ax/2$ so,
$$
\frac {x^2} {1-e^{-Ax}}\le \frac {2x} {A} \,\,\forall x\in \left(0, \frac {1} {2A}\right),
$$
and then, for $A>1$:
$$
\frac {x^2} {1-e^{-Ax}}\le 2x, \,\,\forall x\in (0, 1)
$$
since  $2 (1-e^{-1/2})<1$. 
This gives
$$
\sinh k\frac {z^2(1-z)^2} {\sinh (k(1-z))\sinh (kz)}\le 8(1-e^{-2k})z(1-z)
$$
for all $z\in (0, 1)$ and $k>1$. The Lebesgues convergence Theorem gives then,
\begin{eqnarray*}
&&\lim _{ k\to \infty }k^{-5}\left(\sinh k\int _0^k \phi (k-k')\phi (k') dk'\right)=\\
&&\hskip 4cm =\lim _{ k\to \infty }\sinh k\int _0^1\frac {z^2(1-z)^2} {\sinh (k(1-z))\sinh (kz)}dz\\
&&\hskip 4cm=2\int _0^1z^2(1-z)^2dz=\frac {1} {15}.
\end{eqnarray*}
It is not difficult to check, using similar arguments, that the second integral in the right hand side of (\ref{GammaBis}) is of lower order when $k\to \infty$ and then
(\ref{propG1})  follows.
\end{proof}

\begin{lemma}
\label{kernelK}
\begin{equation}
\label{kernelK1}
\int_0^\infty |K(k,k')|^2 dk'<\frac{4}{15}\pi^4 k^4+\frac{4}{21}\pi^6k^2
\end{equation}
and
\begin{equation}
\label{kernelK2}
\int_0^\infty\int_0^\infty \left|\frac{K(k,k')}{\sqrt{\Gamma(k')\Gamma(k)}}\right|^2dkdk'<+\infty.
\end{equation}
\end{lemma}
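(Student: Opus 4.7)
\textbf{Plan for proving Lemma \ref{kernelK}.}

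For the first inequality (\ref{kernelK1}), the plan is to expand
\[
|K(k,k')|^2 = k^2{k'}^2\bigl(\phi(|k-k'|)-\phi(k+k')\bigr)^2,
\]
apply the elementary inequality $(a-b)^2\le 2(a^2+b^2)$, and then split the integral in $k'$ according to whether $k'<k$ or $k'>k$. A change of variable $u=k-k'$ on $(0,k)$ and $u=k'-k$ on $(k,\infty)$ in the term involving $\phi(|k-k'|)$, and $u=k+k'$ in the term involving $\phi(k+k')$, reduces both contributions to integrals on $(0,\infty)$ with weight $\phi(u)^2$ times a polynomial in $u$ and $k$. Combining them and using the identity $(u-k)^2+(u+k)^2=2u^2+2k^2$, one arrives at
\[
2k^2\int_0^\infty \phi(u)^2\,du+2\int_0^\infty u^2\phi(u)^2\,du,
\]
i.e.\ a linear combination of $\int_0^\infty u^4/\sinh^2 u\,du$ and $\int_0^\infty u^6/\sinh^2 u\,du$. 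These are standard Bose-type integrals: using the expansion $1/\sinh^2 u=4\sum_{n\ge 1}ne^{-2nu}$ one gets the values $\pi^4/30$ and $\pi^6/42$ respectively, from which the stated bound (with the factor of $4$ coming from $(a-b)^2\le 2(a^2+b^2)$ together with $2\cdot 2 = 4$) follows.

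For the second inequality (\ref{kernelK2}), the plan is to combine (\ref{kernelK1}) with the asymptotics of $\Gamma$ from Lemma \ref{Gamma}, namely $\Gamma(k)\ge c_1 k$ on $(0,1]$ and $\Gamma(k)\ge c_2 k^5$ on $[1,\infty)$. Using the symmetry $K(k,k')=K(k',k)$, I would split the domain $(0,\infty)^2$ into four regions according to whether $k,k'\lessgtr 1$. In the regions where at least one variable is large, exponential decay of $\phi$ (visible from $\phi(u)\le 2u^2 e^{-u}$ for $u\ge 1$) makes $|K|$ decay exponentially, which dominates any polynomial factor from $1/\Gamma$. In the region $k,k'\le 1$, a Taylor expansion of $\phi$ near $0$ (using $\phi(u)\sim u$) shows that $\phi(|k-k'|)-\phi(k+k') = O(\min(k,k'))$, hence $|K(k,k')|^2\lesssim k^2{k'}^2\min(k,k')^2$, which against $(\Gamma(k)\Gamma(k'))^{-1}\sim (kk')^{-1}$ gives an integrand $\lesssim kk'\min(k,k')^2$, clearly integrable on $[0,1]^2$.

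The main obstacle will be the bookkeeping in Step~2: $1/\Gamma(k)$ is not integrable at $k=0$ (it behaves like $1/k$), so one genuinely needs the vanishing of $\phi(|k-k'|)-\phi(k+k')$ at the origin to cancel the singularity -- the crude pointwise bound $|\phi(|k-k'|)-\phi(k+k')|\le 2\|\phi\|_\infty$ is not sufficient near $k=k'=0$. The Taylor estimate $\phi(|k-k'|)-\phi(k+k')=O(\min(k,k'))$ is therefore the key quantitative input near the origin, while the exponential decay of $\phi$ handles the behaviour at infinity, including the near-diagonal regime $k\approx k'\to\infty$ where the only contribution to $K$ comes from $\phi(|k-k'|)$, an issue resolved by the factor $(\Gamma(k)\Gamma(k'))^{-1}\sim k^{-10}$.
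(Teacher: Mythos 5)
Your proposal is correct and follows essentially the same route as the paper: the same $(a-b)^2\le 2(a^2+b^2)$ expansion with the Bose integrals $\int_0^\infty u^4/\sinh^2u\,du=\pi^4/30$ and $\int_0^\infty u^6/\sinh^2u\,du=\pi^6/42$ for (\ref{kernelK1}), and the same decomposition of $(0,\infty)^2$ according to $k,k'\lessgtr 1$ combined with the asymptotics of $\Gamma$ for (\ref{kernelK2}). The only cosmetic differences are that on $[0,1]^2$ the paper argues that $K/\sqrt{\Gamma(k)\Gamma(k')}$ extends continuously (with limit $0$ at the origin) rather than via your quantitative bound $|\phi(|k-k'|)-\phi(k+k')|\lesssim\min(k,k')$, and that on $[1,\infty)^2$ and the mixed region it symmetrizes and reuses (\ref{kernelK1}) together with $\Gamma(k)\sim k^5/15$ instead of invoking the pointwise decay of $\phi$.
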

\begin{proof}

$$
\int_0^\infty |K(k,k')|^2 dk'\le 2k^2\int_0^\infty \left(\phi (k-k')^2+\phi (k+k')^2 \right) k'^2dk'
$$
\begin{eqnarray*}
\int_0^\infty \phi (k+k')^2 k'^2dk'= \int_0^\infty \frac {(k+k')^4} {\sinh^2 (k+k')}k'^2dk'\\
=\int_k^\infty \frac {z^4} {\sinh^2 z}(z-k)^2dz\le \int_k^\infty \frac {z^4} {\sinh^2 z}z^2dz\\
\le \int_0^\infty \frac {z^4} {\sinh^2 z}z^2dz=\frac {\pi ^6} {42}
\end{eqnarray*}

\begin{eqnarray*}
\int_0^\infty \phi (k-k')^2 k'^2dk'= \int_0^\infty \frac {(k'-k)^4} {\sinh^2 (k'-k)}k'^2dk'\\
=\int_{-k}^\infty \frac {z^4} {\sinh^2 z}(z+k)^2dz\le \int_{-\infty}^\infty \frac {z^4} {\sinh^2 z}(z+k)^2dz\\
\le 2\int_0^\infty \frac {z^4} {\sinh^2 z}(z^2+k^2)dz=2\frac {\pi ^6} {42}+2k^2\frac {\pi ^4} {30}
\end{eqnarray*}

In order to prove (\ref{kernelK2}) we first write:
\begin{eqnarray*}
\int_0^\infty\int_0^\infty \left|\frac{K(k,k')}{\sqrt{\Gamma(k)\Gamma (k')}}\right|^2dkdk'=I_1+2I_2+I_4\\
I_1=\int_0^1\int_0^1\left|\frac{K(k,k')}{\sqrt{\Gamma(k)\Gamma (k')}}\right|^2dkdk'\\
I_2=\int_0^1\int_1^\infty\left|\frac{K(k,k')}{\sqrt{\Gamma(k)\Gamma (k')}}\right|^2dkdk'\\
I_3=\int_1^\infty\int_1^\infty\left|\frac{K(k,k')}{\sqrt{\Gamma(k)\Gamma (k')}}\right|^2dkdk'\\
\end{eqnarray*}
We  notice that,
\begin{eqnarray*}
I_3=\int_1^\infty\int_1^\infty\left|\frac{K(k,k')}{\sqrt{\Gamma(k)\Gamma (k')}}\right|^2dkdk'
\le 2 \left( \int_1^\infty\int_1^\infty\frac{|K(k,k')|^2}{|\Gamma(k)|^2}dkdk'\right)
\end{eqnarray*}
since:
\begin{eqnarray*}
&&\int_1^\infty\int_1^\infty\left|\frac{K(k,k')}{\sqrt{\Gamma(k)\Gamma (k')}}\right|^2dkdk'=\int_1^\infty\int_1^\infty\left|\frac{K(k,k')}{\sqrt{\Gamma(k)\Gamma (k')}}\right|^21 _{ \Gamma (k)>\Gamma (k') }dkdk'+\\
&&\hskip 5.5cm +\int_1^\infty\int_1^\infty\left|\frac{K(k,k')}{\sqrt{\Gamma(k)\Gamma (k')}}\right|^21 _{ \Gamma (k)<\Gamma (k') }dkdk'\\
&&=\int_1^\infty\int_1^\infty\left|\frac{K(k',k)}{\sqrt{\Gamma(k')\Gamma (k)}}\right|^21 _{ \Gamma (k')>\Gamma (k) }dkdk'+\\
&&\hskip 5cm
+\int_1^\infty\int_1^\infty\left|\frac{K(k,k')}{\sqrt{\Gamma(k)\Gamma (k')}}\right|^21 _{ \Gamma (k)<\Gamma (k') }dkdk'\\
&&=2\int_1^\infty\int_1^\infty\left|\frac{K(k,k')}{\sqrt{\Gamma(k)\Gamma (k')}}\right|^21 _{ \Gamma (k')>\Gamma (k) }dkdk'\\
&&\le 2\int_1^\infty\int_1^\infty\left|\frac{K(k,k')}{\Gamma(k)}\right|^21 _{ \Gamma (k')>\Gamma (k) }dkdk'
\le 2\int_1^\infty\int_1^\infty\left|\frac{K(k,k')}{\Gamma(k)}\right|^2dkdk'
\end{eqnarray*}
and this integral $I_3$ converges by Lemma \ref{Gamma} and  (\ref{kernelK1}). 
\\ We have that 
$$\lim_{k,k'\to 0}\frac{\sqrt{\Gamma(k)\Gamma (k')}}{\sqrt{kk'}}=\frac{\pi^4}{15},$$
and
$$\lim_{k,k'\to 0}\frac{K(k, k')}{\sqrt{kk'}}=0,$$
we have
$$\lim_{k,k'\to 0}\frac{K(k, k')}{\sqrt{\Gamma(k)\Gamma (k')}}=0.$$
Therefore  $\frac{K(k, k')}{\sqrt{\Gamma(k)\Gamma (k')}}$ is continuous  on $[0, 1]\times [0, 1]$  and the first integral is then convergent. Finally let us estimate $I_2$. We first notice that for all $k>0$, $\Gamma (k)>0$ and then, by the continuity of $\Gamma $ on $[0, \infty)$ and (\ref{propG2}), there exists a positive constant $C>0$ such that
$$
\Gamma (k)\ge C>0,\,\,\,\forall k\ge 1.
$$
Therefore
\begin{eqnarray*}
\int_1^\infty\left|\frac{K(k,k')}{\sqrt{\Gamma (k')}}\right|^2dk&\le &\frac {1} {C}\int_1^\infty\left|K(k,k')\right|^2dk\\
&\le & \frac {1} {C}\left(8|B_4|\pi^4 k^4+8|B_6|\pi^6k^2\right)
\end{eqnarray*}
from where we deduce, for some positive constant $C'$:
\begin{eqnarray*}
I_2=\int_0^1\int_1^\infty\left|\frac{K(k,k')}{\sqrt{\Gamma(k)\Gamma (k')}}\right|^2dkdk' 
\le C'\int_0^1\frac {k^2} {\Gamma (k)}dk
\end{eqnarray*}
and this integral converges by (\ref{propG1}).
\end{proof}

Finally, the following elementary estimate is used in the proof of Lemma \ref{conditiondecay} and Lemma \ref{TheoremExponentialDecay}.
\begin{lemma}\label{Hopital}
For all $t>0$, $\theta\geq 0$, $\rho >0$, define
$$Z(t, \theta, \rho )=\int_0^t (s+1)^{-\theta}e^{\rho s}ds.$$
Then, for all $\theta >0$:
\begin{eqnarray}
\label{Hopital0}
&&Z(t, \theta, \rho )\leq [2^{\theta}(t+1)^{-\theta}+3e^{-\rho t/3}]\frac{e^{\rho  t}}{\rho }.
\end{eqnarray}
\end{lemma}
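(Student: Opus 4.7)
The strategy is a standard two-piece splitting of the integral at an intermediate time, chosen so that the polynomial decay $(s+1)^{-\theta}$ and the exponential growth $e^{\rho s}$ can be controlled separately on each piece. Since the statement features the exponent $\rho t/3$, I split at $s_0 = 2t/3$, writing
\begin{equation*}
Z(t,\theta,\rho) = \int_0^{2t/3} (s+1)^{-\theta} e^{\rho s}\,ds + \int_{2t/3}^{t} (s+1)^{-\theta} e^{\rho s}\,ds.
\end{equation*}

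On $[0,2t/3]$ I discard the polynomial weight by using $(s+1)^{-\theta}\le 1$ (valid since $\theta\ge 0$ and $s\ge 0$), and integrate the exponential; this piece is bounded by $e^{2\rho t/3}/\rho = e^{-\rho t/3}\cdot e^{\rho t}/\rho$. On $[2t/3, t]$ I do the opposite: I freeze the polynomial weight at its largest value on that interval, namely $(s+1)^{-\theta} \le (2t/3+1)^{-\theta}$, and estimate $\int_{2t/3}^t e^{\rho s}\,ds \le e^{\rho t}/\rho$, so this piece is bounded by $(2t/3+1)^{-\theta}\, e^{\rho t}/\rho$.

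To recast the second piece in the form appearing in the statement I need the elementary comparison $(2t/3+1)^{-\theta} \le 2^\theta (t+1)^{-\theta}$. This follows from the identity $(2t/3+1)^{-\theta} = 3^\theta (2t+3)^{-\theta}$ together with the inequality $2t+3 \ge \tfrac{3}{2}(t+1)$, which gives $(2t+3)^{-\theta}\le (2/3)^\theta (t+1)^{-\theta}$. Adding the two contributions yields
\begin{equation*}
Z(t,\theta,\rho) \le \Bigl[2^\theta (t+1)^{-\theta} + e^{-\rho t/3}\Bigr]\frac{e^{\rho t}}{\rho},
\end{equation*}
which is actually slightly stronger than (\ref{Hopital0}); the stated coefficient $3$ in front of $e^{-\rho t/3}$ is additional slack that is harmless for the applications in Lemma \ref{conditiondecay} and Lemma \ref{TheoremExponentialDecay}. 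There is no genuine obstacle in the argument: the only nontrivial ingredient is the choice of splitting point, which is dictated solely by the exponent one wants in the final bound.
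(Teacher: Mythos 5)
Your proof is correct and follows essentially the same strategy as the paper's: a two-piece splitting of the integral, discarding the polynomial weight on the near piece and freezing it at the left endpoint on the far piece. The only substantive difference is the split point ($2t/3$ versus the paper's $t/2$): the paper's choice requires the extra observation $s-t<-(s+t)/3$ for $s<t/2$ to extract the $e^{-\rho t/3}$ factor, which is where its coefficient $3$ comes from, whereas your choice produces that factor directly and gives the marginally sharper coefficient $1$.
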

\begin{proof} 
We  define
$$S(t, \theta, \rho )=Z(t, \theta, \rho )\rho e^{-\rho  t}.$$
and split  $S(t)$ into two parts $S(t, \theta, \rho )=S_1(t, \theta, \rho )+S_2(t, \theta, \rho )$:
$$S_1(t, \theta, \rho )=\int_{t/2}^t \rho  (s+1)^{-\theta}e^{\rho (s-t)}ds,$$
$$S_2(t, \theta, \rho )=\int^{t/2}_0 \rho  (s+1)^{-\theta}e^{\rho (s-t)}ds.$$
In the first integral, we have
\begin{eqnarray*}
S_1(t, \theta, \rho )&=&\int_{t/2}^t \rho  (s+1)^{-\theta}e^{\rho (s-t)}ds\le \int_{t/2}^t \rho  (t/2+1)^{-\theta}e^{\rho  (s-t)}ds\\
&\leq& (t/2+1)^{-\theta}\int_{t/2}^t \rho  e^{\rho (s-t)}ds\le  (t/2+1)^{-\theta}(1- e^{- {\rho  t/2} })\\
&\leq& (t/2+1)^{-\theta}\le 2^{\theta}(t+1)^{-\theta}.
\end{eqnarray*}
For  the second integral we notice that, since $s\in (0, t/2)$ we have $s-t < -(s+t)/3$ and then
\begin{eqnarray*}
S_2(t, \theta, \rho )&=&\int_{0}^{t/2} \rho  (s+1)^{-\theta}e^{\rho (s-t)}ds\le \int_0^{t/2} \rho  (s+1)^{-\theta}e^{-\rho s/3}e^{-\rho t/3}ds\\
&\leq&e^{-\rho t/3}\int_0^{t/2} \rho  e^{-\rho s/3}ds\le 3e^{-\rho t/3}.
\end{eqnarray*}
\end{proof}
\subsection{The measure $\mathcal U$ and the function $M$.}
The following expressions for $\mathcal U(p, p')$ and $M(p)$  have been obtained in \cite{EPV}:
\bear
\mathcal U(p, p')=2\left|\mathcal M(p, p', p-p' )\right|^2\delta (\omega (p)-\omega (p')-\omega (p-p'))\times \nonumber\\
\times n_0(\omega (p))(1+n_0(\omega (p')))(1+n_0(\omega (p)-\omega (p')))+ \nonumber\\
+2\left|\mathcal M(p', p, p'-p )\right|^2\delta (\omega (p')-\omega (p)-\omega (p'-p))\times \nonumber\\
\times n_0(\omega (p'))(1+n_0(\omega (p)))(1+n_0(\omega (p')-\omega (p)))-\nonumber\\
-2\left|\mathcal M(p'+p, p, p') \right|^2\delta (\omega (p)+\omega (p')-\omega (p+p'))\times \nonumber\\
\times (1+n_0(\omega (p)))(1+n_0(\omega (p')))n_0(\omega (p)+\omega (p')). \label{S6EU}
\eear
\bear
M(p)=\frac {1} {\omega (p)}\int  _{ \RR^3 }\mathcal U(p, p')\omega (p')dp' \label{S6EM}
\eear

\subsubsection{The formal approximation argument.}
In the limit $|p|/4mgn_c\to 0$ we have:
\bean
&&\omega (p)=\left[\frac {g n_c} {m}|p|^2+\left(\frac {|p|^2} {2m} \right)^2 \right]^{1/2}= c(|p|+\psi (|p|))^{1/2}\\
&&0<\psi (|p|)=o(|p|^3).
\eean
In order to see how the equation (\ref{E51}) may be formally obtained from equation (\ref{E5i})--(\ref{E5iii}) we first  express the delta measures in $\mathcal U(p, p')$ in terms of 
$r=|p|$, $r'=|p'|$, and the angle $u=\cos \theta _{ p, p' }$.  We notice first that, given $r>0$ and $r'>0$, we call $u_1(r, r')$ the positive solution $u$ of the equation $\omega (p)-\omega (p')-\omega (p-p')=0$, or equivalently, of the equation:
$$
r=r'+(r^2+r'^2-2rr' u)^{1/2}
$$
We may then express:
\bean
\delta (\omega (p)-\omega (p')-\omega (p-p'))=
F_1(r, r')\delta(u- u_1 (r, r'))
\eean
with
\bean
F_1(r, r')&=&\frac {-1} {\frac {\partial h} {\partial u}(r, r', u_1 (r, r'))}\\
h(r, r', u)&=&\omega (p-p') 
\eean

An asymptotic expression may be obtained for $u_1 (r, r')$ in the limit $r\to 0$ and $r'\to 0$ as follows (cf. \cite{Benin}):
\bean
&&u_1 (r, r')=1 - \frac{r-r'}{rr'}(\psi (r)-\psi (r')-\psi (r-r'))+\\
&&\hskip 1cm +\mathcal O\left(\psi (r)^2+\psi (r')^2+2\psi (r)\psi (r') \right)\,\,\,\,\hbox{as}\,\, r\to 0, r'\to 0.
\eean
 
 Similar arguments yield:
\bean
\delta (\omega (p')-\omega (p)-\omega (p'-p))=
F_2(r, r')\delta(u- u_2(r, r'))\\
\delta (\omega (p+p')-\omega (p)-\omega (p'))=
F_3(r, r')\delta(u- u_3 (r, r'))
\eean
In the limit considered in this article we are approximating $\omega (p)$ as $c|p|$. Therefore,  the angles between the vectors $p$, $p'$ and $p-p'$ involved in  the collisions must all be equal to one.  This  corresponds to the approximation:
$$u_i(r, r')=1\,\,\,\,i=1, 2, 3.$$
The measure $\mathcal U(p, p')$ is then approximated as:
\bear
&&\mathcal U(p, p')\approx W(p, p')\equiv G(r, r') \delta (u-1)\label{S6EW}\\
&&G(r, r')=\frac {9c}{32\pi ^2mn_c}\left[|r r' (r-r')|^2F_1(r, r')\times \right. \nonumber\\
&&\hskip 1.7cm\times n_0(r)(1+n_0(r'))(1+n_0(r-r'))+ \nonumber\\
&&\hskip 1.7cm+|r r' (r-r')|^2F_2(r, r')\times\nonumber\\
&&\hskip 1.7cm\times n_0(r')(1+n_0(r))(1+n_0(r'-r))-\nonumber\\
&&\hskip 1.7cm-|r r' (r+r')|^2F_3(r, r')\times \nonumber\\
&&\hskip 1.7cm\left.\times (1+n_0(r))(1+n_0(r'))n_0(r+r')  _{\mathcal \null  } \right ]. \label{S6EG}
\eear

Using the rotational invariance of $W(p, p')$ we may write its expansion in terms of Legendre's polynomials:
\bean
 W(p, p')=\sum _{ \ell=0 }^\infty W_\ell (r, r') P _{ \ell }(\cos \theta (p, p'))
\eean
where $r=|p|$, $r'=|p'|$, $P_\ell $ is the Legendre polynomial of degree $\ell$ and $\theta (p, p')$ is the angle between $p$ and $p'$ and
\bean
W_\ell (r, r')=\frac {2\ell +1} {2}\int  _{ -1 }^1W P _{ \ell} (u)du ,\,\,\,u=\cos \theta (p, p').
\eean
It follows that, with some abuse of notation:
\bear
M(p)\equiv M(|p|)=M(r)=\frac {1} {\omega (cr)}\int _0^\infty W_0 (r, r') \omega (cr') r'^2dr', \label{SAEM0}
\eear
where we recall that $r=|p|$. On the other hand the function $W_0(r, r')$ is given by
\bear
W_0(r, r')=\frac {9(r-r')^2H(r-r')} {32\pi^2 n}n_0(r)(1+n_0(r'))(1+n_0(r-r'))+\nonumber\\
+\frac {9(r'-r)^2H(r'-r)} {32\pi^2 n}n_0(r')(1+n_0(r))(1+n_0(r'-r))-\nonumber\\
- \frac {9(r+r')^2} {32\pi ^2 n}n_0(r+r')(1+n_0(r))(1+n_0(r')). \label{SAEW0}
\eear
where,  we denote $n_0(r)=n_0(p)$,  and $H(r)$ is the Heaviside function (see \cite{EPV}).

\begin{prop}
\label{S6P1}
Let $M(p)$ be the function defined in (\ref{SAEM0}). Then,
\bear
\label{S6EMG}
M(p)\equiv M(r)=\Gamma (k)\,n_0(r)(1+n_0(r))
\eear
where $r=|p|$ and $k=cr/2k_BT$. Moreover:
\bear
\label{S6EMzero}
\lim _{ r\to 0 }\frac {M(r)\sinh^2 k} {k}=\frac {\pi ^4} {60}\\
\label{S6EMinf}
\lim _{ r\to \infty }\frac {M(r)\sinh^2 k} {k^5}=\frac {1} {60}.
\eear
\end{prop}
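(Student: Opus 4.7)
The proof splits naturally into the factorization identity (\ref{S6EMG}) and the two asymptotic limits. For the factorization, the plan is to substitute the explicit expression (\ref{SAEW0}) of $W_0(r,r')$ into the definition (\ref{SAEM0}) of $M(r)$, factor $n_0(r)(1+n_0(r))$ out of the integrand, and identify the remaining integral with $\Gamma(k)$ by means of the change of variable $k'=cr'/(2k_BT)$. The key algebraic tool is the Bose--Einstein detailed balance identity
\[
n_0(a)\,n_0(b)\,(1+n_0(a+b)) \;=\; n_0(a+b)\,(1+n_0(a))\,(1+n_0(b)),
\]
valid for all $a,b>0$, which follows from $1+n_0(x)=n_0(x)e^{2k_x}$ together with $n_0(x)=1/(e^{2k_x}-1)$, where $k_x=cx/(2k_BT)$. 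Applied with $(a,b)$ equal to $(r',r-r')$, $(r,r'-r)$ and $(r,r')$ in the three terms of (\ref{SAEW0}) respectively, this identity pulls the common factor $n_0(r)(1+n_0(r))$ outside, while the residual rational combinations of Bose factors collapse by $e^{2k}-1=2e^{k}\sinh k$ to the form $\sinh k/\bigl(2\sinh k'\sinh(k\pm k')\bigr)$. Combining the two Heaviside contributions into $\phi(|k-k'|)$ and the third into $\phi(k+k')$, using $(r\pm r')^2/\sinh(k\pm k') = \alpha^2 \phi(|k\pm k'|)$ with $\alpha=2k_BT/c$, reduces $W_0$ to a constant multiple of $n_0(r)(1+n_0(r))\,\sinh(k)\bigl[\phi(|k-k'|)-\phi(k+k')\bigr]/\sinh k'$. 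After the change of variable in (\ref{SAEM0}), the resulting integral matches the one in (\ref{E10}) defining $\Gamma(k)$.

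The matching of the overall numerical constant is the principal bookkeeping nuisance: several dimensional parameters ($c$, $k_BT$, $n_c$) and the geometric factor coming from the angular $\delta(u-1)$ in (\ref{S6EW}) must combine so as to cancel exactly. A cleaner way of closing this step that avoids the explicit constant-chasing is to invoke the derivation of (\ref{E8bis})--(\ref{E12b}) from (\ref{E5000-1})--(\ref{E5000-2}) via the change of variable (\ref{E8}) (cf.\ \cite{Buot:ORT:1997} and \cite{Wannier}), and to observe that matching the coefficients of $f(t,k)$ in the multiplicative loss terms on both sides of the transformed equation forces $\Gamma(k) = M(r)/(n_0(r)(1+n_0(r)))$, which is exactly (\ref{S6EMG}).

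Once (\ref{S6EMG}) is established, the two asymptotic estimates follow painlessly from Lemma \ref{Gamma}. From $n_0(r)=1/(e^{2k}-1)$ a direct computation yields $n_0(r)(1+n_0(r))=1/(4\sinh^2 k)$, so (\ref{S6EMG}) reads $4\,M(r)\sinh^2(k)=\Gamma(k)$. Then the limit (\ref{propG1}), namely $\Gamma(k)/k\to \pi^4/15$ as $k\to 0$, gives $M(r)\sinh^2(k)/k\to \pi^4/60$, which is (\ref{S6EMzero}); similarly (\ref{propG2}) yields $M(r)\sinh^2(k)/k^5\to 1/60$ as $r\to \infty$, which is (\ref{S6EMinf}). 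The main obstacle is thus confined to the first part, and specifically to the careful tracking of the numerical prefactors in the factorization; the remainder is straightforward algebra and an appeal to Lemma \ref{Gamma}.
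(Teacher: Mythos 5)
Your proposal is correct, and its decisive step --- deducing (\ref{S6EMG}) by matching the loss terms of (\ref{E5000-1})--(\ref{E5000-2}) and (\ref{E8bis}) under the change of variables (\ref{E8}), then reading off the asymptotics from Lemma \ref{Gamma} together with $n_0(r)(1+n_0(r))=1/(4\sinh^2 k)$ --- is exactly the paper's own (one-line) proof. The direct substitution of (\ref{SAEW0}) into (\ref{SAEM0}) via the Bose--Einstein detailed-balance identity is a sound but unexecuted alternative, and since you rightly fall back on the equation-matching argument to avoid the constant-chasing, nothing essential is missing.
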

\begin{proof}
The function $f$  satisfies the equation (\ref{E8bis}).  Using that, by (\ref{E8}), $f(t, k)=(k/\sinh k)\Omega (t, p)$ and that $\Omega (t, p)$ satisfies equation  (\ref{E51}), identity (\ref{S6EMG}) follows. Properties (\ref{S6EMzero}) and (\ref{S6EMinf}) are then consequence of Lemma \ref{Gamma}.
\end{proof}
\subsection{Legendre's polynomials.} We recall that the Legendre's polynomial of degree $n\in \NN$ is defined as:
\bear
\label{S6ELegendre}
P_n(x)=\frac {1}2^n {n!}\frac {d^n} {dx^n}\left[\left(x^2-1 \right)^2\right], \,\,\, n=0, 1, 2, \cdots
\eear
These polynomials  form a complete orthogonal set of functions in $L^2(-1, 1)$ such that:
\bear
\label{S6Eorth}
\int  _{ -1 }^{1}P_m(x)P_n(x)dx=\frac {2} {2n+1}\delta  _{ m n }.
\eear
The following property of the Legendre's polynomials is useful to obtain fornula (\ref{S1E210}):
\bear
\label{S6LegHarm}
P _{ \ell }(u\cdot u')=\frac {4\pi } {2\ell+1}\sum _{ m=-\ell }^\ell Y _{ \ell\, m }(u)Y^* _{ \ell\, m }(u')
\eear
for all $u\in \Sx^2, u'\in \Sx^2$.\\

 {\bf Acknowledgements.} The work of M. E.  has been supported by  Grant 2011-29306-C02-00, MICINN, Spain and Basque Government Grant IT641-13. The second author has been supported by  Grant MTM2011-29306-C02-00, MICINN, Spain, ERC Advanced Grant FP7-246775 NUMERIWAVES, and Grant PI2010-04 of the Basque Government. Both authors are supported by    SEV-2013-0323, MINECO, Spain. T. M. B. would like to thank  his advisor, Professor Enrique Zuazua for fruitful  discussions on this work. Both authors acknowledge enlightening  conversations with Pr. M. A. Valle.
\bibliographystyle{plain}\bibliography{QuantumBoltzmannPhonons}

\begin{thebibliography}{10}

\bibitem{ark}
Leif Arkeryd.
\newblock Intermolecular forces of infinite range and the boltzmann equation.
\newblock {\em Arch. Ration. Mech. Anal.}, 77(1):11--21, 1981.

\bibitem{Benin}
David Benin.
\newblock Phonon viscosity and wide-angle phonon scattering in superfluid
  helium.
\newblock {\em Phys. Rev. B}, 11:145--149, Jan 1975.

\bibitem{Buot:ORT:1997}
F.~A. Buot.
\newblock On the relaxation rate spectrum of phonons.
\newblock {\em J. Phys. C: Solid State Phys.}, 5(1):5--14, 1972.

\bibitem{Caflisch:1980:BES}
R.~E. Caflisch.
\newblock The {B}oltzmann equation with a soft potential. {I}. {L}inear,
  spatially-homogeneous.
\newblock {\em Comm. Math. Phys.}, 74(1):71--95, 1980.

\bibitem{MR1555365}
T.~Carleman.
\newblock Sur la th\'eorie de l'\'equation int\'egrodiff\'erentielle de
  {B}oltzmann.
\newblock {\em Acta Math.}, 60(1):91--146, 1933.

\bibitem{CIP}
C.~Cercignani, R.~Illner, and M.~Pulvirenti.
\newblock {\em The mathematical theory of dilute gases}, volume 106 of {\em
  Applied Mathematical Sciences}.
\newblock Springer-Verlag, New York, 1994.

\bibitem{Claro}
F.~H. {Claro} and G.~H. {Wannier}.
\newblock {Relaxation Spectrum of Phonons: A Solvable Model}.
\newblock {\em J. Math. Phys.}, 12:92--95, 1971.

\bibitem{Desv90}
L.~Desvillettes.
\newblock Convergence to equilibrium in large time for boltzmann and {B.G.K.}
  equations.
\newblock {\em Arch. Ration. Mech. Anal}, 110(1):73--91, 1990.

\bibitem{DesvillettesVillani:2005:OTG}
L.~Desvillettes and C.~Villani.
\newblock On the trend to global equilibrium for spatially inhomogeneous
  kinetic systems: the {B}oltzmann equation.
\newblock {\em Invent. Math.}, 159(2):245--316, 2005.

\bibitem{E}
U.~Eckern.
\newblock Relaxation processes in a condensed bose gas.
\newblock {\em J. Low Temp. Phys.}, 54:333--359, 1984.

\bibitem{EPV}
F.~Escobedo, M.~Pezzotti and M.~Valle.
\newblock Analytical approach to relaxation dynamics of condensed {B}ose gases.
\newblock {\em Ann. Physics}, 326(4):808--827, 2011.

\bibitem{Grad:1963:ATB2}
H.~Grad.
\newblock Asymptotic theory of the {B}oltzmann equation. {II}.
\newblock In {\em Rarefied {G}as {D}ynamics ({P}roc. 3rd {I}nternat. {S}ympos.,
  {P}alais de l'{UNESCO}, {P}aris, 1962), {V}ol. {I}}, pages 26--59. Academic
  Press, New York, 1963.

\bibitem{IG}
M.~Imamovic-Tomasovic and A.~Griffin.
\newblock Quasiparticle kinetic equation in a trapped bose gas at low
  temperatures.
\newblock {\em J. Low Temp. Phys.}, 122:617--655, 2001.

\bibitem{KD1}
T.~R. Kirkpatrick and J.~R. Dorfman.
\newblock Transport theory for a weakly interacting condensed {B}ose gas.
\newblock {\em Phys. Rev. A (3)}, 28(4):2576--2579, 1983.

\bibitem{KD2}
T.~R. Kirkpatrick and J.~R. Dorfman.
\newblock Transport in a dilute but condensed nonideal bose gas: Kinetic
  equations.
\newblock {\em J. Low Temp. Phys.}, 58:301--331, 1985.

\bibitem{Spohn}
H.~Spohn.
\newblock {The Phonon Boltzmann Equation, Properties and Link to Weakly
  Anharmonic Lattice Dynamics}.
\newblock {\em J. Stat. Phys.}, 124:1041--1104, 2006.

\bibitem{GS}
RobertM. Strain and Yan Guo.
\newblock Exponential decay for soft potentials near maxwellian.
\newblock {\em Archive for Rational Mechanics and Analysis}, 187(2):287--339,
  2008.

\bibitem{Toscani}
G.~Toscani.
\newblock H-theorem and asymptotic trend of the solution for a rarefied gas in
  the vacuum.
\newblock {\em Arch. Ration. Mechan. Anal.}, 100(1):1--12, 1987.

\bibitem{Ukai}
S.~{Ukai} and K.~{Asano}.
\newblock {On the Cauchy Problem of the Boltzmann Equation with a Soft
  Potential}.
\newblock {\em Publ. RIMS, Kyoto Univ.}, 18:57--99, 1982.

\bibitem{MR1942465}
C.~Villani.
\newblock A review of mathematical topics in collisional kinetic theory.
\newblock In {\em Handbook of mathematical fluid dynamics, {V}ol. {I}}, pages
  71--305. North-Holland, Amsterdam, 2002.

\bibitem{Wannier}
G.~H. {Wannier}.
\newblock {\em Bull. Am. Phys. Soc.}, 14:303--, 1969.

\end{thebibliography}

\end{document}